\theoremstyle{definition}
\newtheorem{definition}{Definition}[section]
\theoremstyle{plain}
\newtheorem{lem}[definition]{Lemma}
\newtheorem{thm}[definition]{Theorem}
\newtheorem{prop}[definition]{Proposition}
\newtheorem{cor}[definition]{Corollary}
\theoremstyle{remark}
\newtheorem{rem}[definition]{Remark}
\newtheorem{notation}[definition]{Notation}
\newtheorem{mainthm}[definition]{Main Theorem}
\newcommand{\cdfr}{C_{\text{df}}^r}
\newcommand{\cdfz}{C_{\text{df}}^0}
\newcommand{\vdfr}{\operatorname{\mathbf{VB}}_{\text{df}}^r}
\newcommand{\proj}{\operatorname{\mathbf{Proj}}}
\newcommand{\projsec}{\operatorname{Proj}}
\newcommand{\funcvec}{\Gamma}
\newcommand{\bdfr}{\operatorname{\mathbf{BS}}_{\text{df}}^r}
\newcommand{\bilsp}{\operatorname{\mathbf{BS}}}
\newcommand{\funcbil}{\Gamma_b}
\newcommand{\rspec}{\operatorname{Spec}_r}
\newcommand{\nash}{\mathcal N}
\newcommand{\gl}{\operatorname{GL}}
\newcommand{\sgl}{\operatorname{SGL}}
\newcommand{\grassman}{\operatorname{\mathbf{Gr}}}
\begin{document}

\title[definable vector bundles and bilinear spaces]{Definable $C^r$ vector bundles and bilinear spaces in an o-minimal structure and their homotopy theorems}
\author[M. Fujita]{Masato Fujita}
\address{Department of Liberal Arts,
Japan Coast Guard Academy,
5-1 Wakaba-cho, Kure, Hiroshima 737-8512, Japan}
\email{fujita.masato.p34@kyoto-u.jp}

\begin{abstract}
Consider an o-minimal structure on the real field.
Let $M$ be a definable $C^r$ manifold, where $r$ is a nonnegative integer. 
We first demonstrate an equivalence of the category of definable $C^r$ vector bundles over $M$ with the category of finitely generated projective modules over the ring $\cdfr(M)$.
Here, the notation $\cdfr(M)$ denotes the ring of definable $C^r$ functions on $M$.
We also show an equivalence of the category of definable $C^r$ bilinear spaces over $M$ with the category of bilinear spaces over the ring $\cdfr(M)$.
The main theorems of this paper are homotopy theorems for definable $C^r$ vector bundles and definable $C^r$ bilinear spaces over $M$.
As an application, we show that the Grothendieck rings $K_0(\cdfr(M))$, $K_0(\cdfz(M))$  and the Witt ring $W(\cdfr(M))$ are all isomorphic.
\end{abstract}

\subjclass[2010]{Primary 03C64; Secondary 57R22, 19A49}

\keywords{o-minimal structure, vector bundle, Grothendieck ring, Witt ring}

\maketitle

\section{Introduction}\label{sec:intro}
O-minimal structure was initially a subject of mathematical logic, but it was realized that an o-minimal structure provides an excellent framework of geometry \cite{vdD}.
It is considered to be a generalization of semialgebraic geometry \cite{BCR}, and geometric assertions on semialgebraic sets are generalized to the o-minimal case, such as triangulation and trivialization \cite{vdD}. 

In real algebraic geometry, a systematic study of algebraic vector bundles over real algebraic varieties is given in a series of papers: \cite{B1,B2,B3,B4,B5,B6,B7,B8,B9,B10,B11}, and a part of the results is summarized in \cite[Chapter 12]{BCR}. 
Semialgebraic vector bundles are also studied in \cite{BCR,C}.
For instance, there is an equivalence of the category of finitely generated project modules over the ring of regular functions on an affine real algebraic variety with algebraic vector bundles over the variety by \cite[Proposition 12.1.12]{BCR}.
We also find a similar equivalence for semialgebraic objects in \cite[Corollary 12.7.6]{BCR}.
It is natural to generalize these studies to the o-minimal case.
A study on definable fiber bundles was already done in \cite{K1}, but we cannot find an assertion on an equivalence of the category of definable $C^r$ vector bundles with the category of finitely generated projective modules over the ring of definable $C^r$ functions.

We fix an o-minimal structure on the real field throughout this paper.
The term `definable' means `definable in the o-minimal structure' in this paper.
Let $M$ be a definable $C^r$ manifold, where $r$ is a nonnegative integer. 
Note that all definable $C^r$ manifolds are affine by \cite[Theorem 1.1]{K0} and \cite[Theorem 1.3]{Fisher}.
We use this fact without explicitly stated in this paper.
The notation $\cdfr(M)$ denotes the ring of definable $C^r$ functions on $M$.
We investigate several categories in this paper.
The first category is $\vdfr(M)$;  
the objects are definable $C^r$ vector bundles over $M$, and the arrows are definable $C^r$ $M$-morphisms between definable $C^r$ vector bundles.
The second  category is $\proj(R)$, where $R$ is a commutative ring.
The objects are finitely generated projective $R$-modules, and the arrows are homomorphisms between $R$-modules.

For any definable $C^r$ vector bundle $\xi$ over a definable $C^r$ manifold $M$, $\funcvec(\xi)$ denotes the set of all definable $C^r$ sections of $\xi$. 
It is a finitely generated projective $\cdfr(M)$-module.
The notation $\funcvec(\varphi)$ is the induced homomorphism from $\funcvec(\xi)$ to $\funcvec(\xi')$ for any definable $C^r$ $M$-morphism $\varphi:\xi \rightarrow \xi'$.  
$\funcvec$ is a covariant functor from $\vdfr(M)$ to $\proj(\cdfr(M))$.
The following is the first main theorem in this paper:

\begin{mainthm}\label{thm:main1}
Let $M$ be a definable $C^r$ manifold, where $r$ is a nonnegative integer. 
Then, the functor $\funcvec$ is an equivalence of the category $\vdfr(M)$ with the category $\proj(\cdfr(M))$.
\end{mainthm}

We define a definable $C^r$ bilinear space over a definable $C^r$ manifold in Section \ref{sec:equi_bil}.
A bilinear space over a commutative ring is defined in \cite{BCR, MH}.
We show an equivalence of bilinear spaces.
Let $M$ be a definable $C^r$ manifold, where $r$ is a nonnegative integer. 
The notation $\bdfr(M)$ is the category whose objects are definable $C^r$ bilinear spaces over $M$ and whose arrows are definable $C^r$ $M$-morphisms between definable $C^r$ bilinear spaces.
We also consider the category $\bilsp(R)$, where $R$ is a commutative ring.
The objects are bilinear spaces over the ring $R$ and the arrows are morphisms of bilinear spaces over $R$.
We also show that these categories are equivalent.

\begin{mainthm}\label{thm:main2}
Let $M$ be a definable $C^r$ manifold, where $r$ is a nonnegative integer. 
The category $\bdfr(M)$ is equivalent to the category $\bilsp(\cdfr(M))$.
\end{mainthm}

The homotopy theorem for vector bundles is well-known and, for instance, it is found in \cite{H}.
We extend the theorem to the general o-minimal and $C^r$ case.
\begin{mainthm}[Homotopy theorem for definable $C^r$ vector bundles]\label{thm:main30}
Consider a definable $C^r$ manifold $M$, where $r$ is a nonnegative integer. 
Let $U$ be a definable open subset of $M \times \mathbb R$ containing $M \times [0,1]$.
Let $\Xi$ be a definable $C^r$ vector bundle over $U$.
Then, two definable $C^r$ vector bundles $\Xi|_{M \times \{0\}}$ and $\Xi|_{M \times \{1\}}$ are definably $C^r$ isomorphic.
Here, the notation $\Xi|_{M \times \{0\}}$ denotes the restriction of the vector bundle $\Xi$ to $M \times \{0\}$.
\end{mainthm}

We can also prove a homotopy theorem for bilinear spaces. 
It is a generalization of the homotopy theorem for semialgebraic $C^0$ bilinear spaces in \cite[Corollary 15.1.9]{BCR}.
\begin{mainthm}[Homotopy theorem for definable $C^r$ bilinear spaces]\label{thm:main3}
Consider a definable $C^r$ manifold $M$, where $r$ is a nonnegative integer. 
Let $U$ be a definable open subset of $M \times \mathbb R$ containing $M \times [0,1]$.
Let $(\Xi,B)$ be a definable $C^r$ bilinear space over $U$.
Then, two definable $C^r$ bilinear spaces $(\Xi,B)|_{M \times \{0\}}$ and $(\Xi,B)|_{M \times \{1\}}$ are definably $C^r$ isometric.
Here, the notation $(\Xi,B)|_{M \times \{0\}}$ denotes the restriction of the bilinear space $(\Xi,B)$ to $M \times \{0\}$.
\end{mainthm}

We show isomorphisms between Grothendieck rings and Witt rings as an application of the above theorems.
The Grothendieck group of the ring is studied in K-theory \cite{Milnor,Weibel}.
It is defined as an abelian group generated by finitely generated projective modules modulo some equivalence relation.
The multiplication in the abelian group is defined as the tensor product of two representative projective modules.
The Witt ring \cite{MH} of the ring of semialgebraic functions is isomorphic to its Grothendieck ring by \cite[Theorem 15.1.2]{BCR}.
A similar assertion holds true for the ring of definable $C^r$ functions.
\begin{mainthm}\label{thm:main4}
Let $M$ be a definable $C^r$ manifold, where $r$ is a nonnegative integer. 
The Grothendieck ring $K_0(\cdfr(M))$ of the ring $\cdfr(M)$ is isomorphic to the Witt ring $W(\cdfr(M))$ of the same ring.
The Grothendieck rings $K_0(\cdfr(M))$ and $K_0(\cdfz(M))$ are also isomorphic.
\end{mainthm}

This paper is organized as follows:
In Section \ref{sec:preparation}, we first introduce several basic lemmas used through the remaining sections.
We show the equivalence of the category $\vdfr(M)$ with the category $\proj(\cdfr(M))$ in Section \ref{sec:equi_vec}.
An approximation theorem for sections of definable $C^r$ vector bundles and its applications are introduced in Section \ref{sec:appro}.
We demonstrate that the category $\bdfr(M)$ is equivalent to the category $\bilsp(C_{\text{df}}^r(M))$  in Section \ref{sec:equi_bil}.
The homotopy theorems for definable $C^r$ vector bundles and bilinear spaces are proved in Section \ref{sec:homotopy}.
As an application of these results, we demonstrate that the Grothendieck ring of $\cdfr(M)$ and the Witt ring of $\cdfr(M)$ are isomorphic in Section \ref{sec:grothendieck}.

We summarize basic notations used in this paper.
The notation $\overline{V}$ denotes the closure of a subset $V$ of a topological space.
The complement of the set $V$ is denoted by $V^c$ in this paper.
The transpose of a matrix $X$ is denoted  by ${}^t\!X$.
The notation $[0,1]$ denotes the closed interval $\{x \in \mathbb R\;|\; 0 \leq x \leq 1\}$ in $\mathbb R$.

\section{Preliminary}\label{sec:preparation}
We provide several basic lemmas in this section.
We first review the definition of definable $C^r$ manifolds.
\begin{definition}[Definable manifolds] 
Let $r$ be a nonnegative integer.
A \textit{definable $C^r$ manifold} $M$ of dimension $n$ is a $C^r$ manifold having finite charts $\{(U_i, \phi_i)\}_{i=1}^q$ such that, for any $1 \leq i,j \leq q$,
\begin{itemize}
\item $U_i$ is an open subset of $M$, 
\item $\phi_i: U_i \rightarrow V_i$ is a homeomorphism, where $V_i$ is an definable open subset of $\mathbb R^n$, and, 
\item $\phi_i|_{U_i \cap U_j} \circ (\phi_j|_{\phi_j(U_i \cap U_j)})^{-1}: \phi_j(U_i \cap U_j) \rightarrow \phi_i(U_i \cap U_j)$ is a definable $C^r$ diffeomorphism if $U_i \cap U_j \not= \emptyset$.
\end{itemize}
Charts satisfying the above conditions are called \textit{definable $C^r$ charts} of $M$.
A $C^r$ map $f:M \rightarrow M'$ between definable $C^r$ manifolds is a \textit{definable $C^r$ map} if, for any definable $C^r$ charts $\{(U_i, \phi_i:U_i \rightarrow V_i)\}_{i=1}^q$ of $M$ and definable $C^r$ charts $\{(U'_i, \phi'_i:U'_i \rightarrow V'_i)\}_{i=1}^{q'}$ of $M'$, the map $(\phi'_j)^{-1}\circ f|_{U_i \cap f^{-1}(U'_j)}\circ \phi_i: \phi_i^{-1}(U_i \cap f^{-1}(U'_j)) \rightarrow V'_j$ is a definable $C^r$ map for any $1 \leq i \leq q$ and $1 \leq j \leq q'$.
An \textit{affine definable $C^r$ manifold} $M$ of dimension $n$ is a definable subset of a Euclidean space $\mathbb R^m$ such that there exist a finite definable open covering $\{U_i\}_{i=1}^q$ of $M$ in $\mathbb R^m$ and a family of definable $C^r$ diffeomorphisms onto definable open sets $\{\phi_i:U_i \rightarrow \Omega_i \subset \mathbb R^m\}_{i=1}^{q}$ with $M \cap U_i = \phi_i^{-1}((\mathbb R^n \times \{0\}) \cap \Omega_i)$ for any $1 \leq i \leq q$. 
We call the collection of pairs $(U_i,\phi_i)_{i=1}^q$ \textit{charts} of the affine definable $C^r$ manifold.
If a definable $C^r$ manifold $M$ has a definable $C^r$ immersion $\iota:M \rightarrow \mathbb R^m$, we call it an affine definable $C^r$ manifold identifying it with its image.
Note that all definable $C^r$ manifolds are affine by \cite[Theorem 1.1]{K0} and \cite[Theorem 1.3]{Fisher}.
We use this fact without explicitly stated in this paper.
\end{definition}

\begin{lem}\label{lem:zeroset}
Let $M$ be a definable $C^r$ manifold with $0 \leq r < \infty$. 
Given a definable closed subset $X$ of $M$,
there exists a definable $C^r$ function $f:M \rightarrow \mathbb R$ whose zero set is $X$.
\end{lem}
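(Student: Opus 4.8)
The plan is to reduce the problem to a local-to-global construction. Working in a chart, $M$ is covered by finitely many definable open sets $U_1,\dots,U_q$ of a Euclidean space, and on each $U_i$ the manifold $M \cap U_i$ is cut out (via the diffeomorphism $\phi_i$) from $\mathbb R^n \times \{0\}$. For a definable closed $X \subseteq M$, the complement $M \setminus X$ is definable and open in $M$; it suffices to find, for each $i$, a definable $C^r$ function $g_i: M \to \mathbb R_{\ge 0}$ whose zero set is exactly $X \cup (M \setminus U_i)^{\text{something}}$ — more precisely, one wants the zero set of $g_i$ restricted to $U_i$ to be $X \cap U_i$, while $g_i$ extends by $0$ off $U_i$ in a definable $C^r$ fashion. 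Then $f = \sum_{i=1}^q g_i$ (or a suitably scaled sum) has zero set $X$: a point lies in the zero set of $f$ iff it lies in the zero set of every $g_i$, and since the $U_i$ cover $M$, each point lies in some $U_i$, forcing it into $X$.

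The key local input is the existence, on a definable open subset $V$ of $\mathbb R^n$ (or of $\mathbb R^m$), of a definable $C^r$ function whose zero set is a prescribed definable closed subset $Y$ of $V$. For $r = 0$ one may take the definable continuous function $x \mapsto \operatorname{dist}(x, Y)$ (distance to $Y$), which is definable since $Y$ is definable and the distance function is defined by an $\exists$-formula over the structure. For $r \ge 1$ one cannot use the distance function directly, but one can instead use a standard trick: take a definable continuous $d$ with zero set $Y$, and then compose/modify it to gain smoothness, for instance by writing $Y$ as the zero set of finitely many definable functions arising from a cell decomposition adapted to $Y$ and its complement, then forming a sum of squares of definable $C^r$ functions each vanishing on the relevant pieces. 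Alternatively, and more cleanly, one invokes the fact that any definable closed set in $\mathbb R^n$ is the zero set of a definable $C^r$ function — this is essentially the o-minimal analogue of \cite[Proposition 2.6.2 or Corollary 2.6.4]{BCR} and should already be available in the literature on o-minimal structures (e.g. via \cite{vdD}); I would cite it rather than reprove it.

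The main obstacle is the gluing step in the $C^r$ case: one must ensure that the locally defined functions, when extended by zero outside their chart domains, remain definable and $C^r$ across the "boundary" of each $U_i$ inside $M$. This is handled by a definable $C^r$ partition-of-unity-type argument, or more directly by multiplying each local zero-set function by a definable $C^r$ function that vanishes to sufficiently high order (here, since $r < \infty$, vanishing of order $r{+}1$ suffices) on $M \setminus U_i$ and is positive on a closed definable neighborhood exhausting $U_i$. Such bump-type functions exist because $M \setminus U_i$ and any definable compact-exhaustion piece of $U_i$ are disjoint definable closed sets, and o-minimal $C^r$ geometry provides definable $C^r$ separating functions between disjoint definable closed sets (again the analogue of the semialgebraic statement). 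Once each $g_i$ is globally definable and $C^r$ on $M$ with zero set $(M \setminus U_i) \cup (X \cap U_i)$, the sum $f = \sum_i g_i^2$ is definable $C^r$ with zero set $\bigcap_i \big((M \setminus U_i) \cup (X \cap U_i)\big) = X$, completing the proof.
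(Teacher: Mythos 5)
There is a genuine gap in your gluing step, and it occurs precisely at the point you wave through. You claim that multiplying a local zero-set function on $U_i$ by a definable $C^r$ function vanishing to order $r+1$ on $M \setminus U_i$ yields a definable $C^r$ function on all of $M$ after extension by zero. In a general o-minimal structure this is false: the local function (and its derivatives up to order $r$) can blow up near the frontier of $U_i$ in $M$ faster than any fixed power of the distance to that frontier --- for instance, in the structure $\mathbb R_{\exp}$ a definable function can grow like $e^{1/d}$ --- so no fixed finite order of flatness of the bump factor guarantees even continuity of the product across $\partial U_i$, let alone $C^r$-smoothness. Repairing this requires a domination/Łojasiewicz-type argument (choosing the flatness of the bump adapted to the definable blow-up of the local data), which is exactly the kind of nontrivial analytic content one should not have to redo here; note also that the separating bump functions you invoke are, in this paper, a downstream consequence (Lemma \ref{lem:sep}) of the very lemma you are proving, so you would need to source them independently to avoid circularity.

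The second point is that the entire local-to-global apparatus is unnecessary, and this is where the paper's proof differs: since every definable $C^r$ manifold is affine (\cite[Theorem 1.1]{K0}, \cite[Theorem 1.3]{Fisher}), one may take $M$ to be a definable $C^r$ submanifold of some $\mathbb R^m$, apply the global result you yourself mention --- every definable closed subset of $\mathbb R^m$ is the zero set of a definable $C^r$ function (\cite[Theorem C.11]{vdDM}) --- to the closure $\overline{X}$ of $X$ in $\mathbb R^m$, and restrict the resulting function $F$ to $M$. Since $X$ is closed in $M$, one has $\overline{X} \cap M = X$, so $F|_M$ has zero set exactly $X$ and no charts, bump functions, or sums of squares are needed. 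Your plan does contain the right key citation, but it deploys it only locally and then stumbles on the gluing; applying it once, globally in the ambient Euclidean space, removes the problematic step entirely.
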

\begin{proof}
We assume that $M$ is a definable $C^r$ submanifold of a Euclidean space $\mathbb R^m$. 
Consider the closure $\overline{X}$ of $X$ in $\mathbb R^m$.
There exists a definable $C^r$ function $F:\mathbb R^m \rightarrow \mathbb R$ with $F^{-1}(0)=\overline{X}$ by \cite[Theorem C.11]{vdDM}.
The restriction of $F$ to $M$ satisfies the requirement.
\end{proof}

\begin{lem}\label{lem:sep}
Let $M$ be a definable $C^r$ manifold with $0 \leq r < \infty$. 
Let $X$ and $Y$ be closed definable subsets of $M$ with $X \cap Y = \emptyset$.
Then, there exists a definable $C^r$ function $f:M \rightarrow [0,1]$ with $f^{-1}(0)=X$ and $f^{-1}(1)=Y$.
\end{lem}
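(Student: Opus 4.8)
The plan is to reduce Lemma~\ref{lem:sep} to Lemma~\ref{lem:zeroset} by the standard partition-of-unity-style trick adapted to the o-minimal $C^r$ setting. First I would apply Lemma~\ref{lem:zeroset} to the closed definable set $X$ to obtain a definable $C^r$ function $g:M \rightarrow \mathbb R$ with $g^{-1}(0) = X$, and likewise to $Y$ to obtain a definable $C^r$ function $h:M \rightarrow \mathbb R$ with $h^{-1}(0) = Y$. After replacing $g$ by $g^2$ and $h$ by $h^2$ I may assume $g \geq 0$ and $h \geq 0$, with the zero sets unchanged. Since $X \cap Y = \emptyset$, the function $g + h$ is strictly positive everywhere on $M$, hence nowhere vanishing, so it is a definable $C^r$ function with a definable $C^r$ reciprocal.

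Next I would set
\[
f = \frac{g}{g+h}.
\]
This is a quotient of definable $C^r$ functions with nowhere-vanishing denominator, so it is itself definable and $C^r$ on $M$. Because $0 \leq g \leq g+h$ pointwise, we have $0 \leq f \leq 1$, so $f$ takes values in $[0,1]$ as required. For the level sets: $f(x) = 0$ exactly when $g(x) = 0$, i.e. exactly when $x \in X$; and $f(x) = 1$ exactly when $g(x) = g(x)+h(x)$, i.e. when $h(x) = 0$, i.e. when $x \in Y$. This gives $f^{-1}(0) = X$ and $f^{-1}(1) = Y$, completing the argument.

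I do not anticipate a serious obstacle here; the only point that needs a word of care is the appeal to Lemma~\ref{lem:zeroset}, which requires $r < \infty$ — and that hypothesis is exactly what is assumed in the statement of Lemma~\ref{lem:sep}. One should also note that squaring and forming the quotient of nowhere-zero denominators preserve definability (clear, since these are built from the ring operations on $C_{\text{df}}^r(M)$ and definable functions are closed under them) and preserve the $C^r$ class (clear for finite $r$, since $x \mapsto x^2$ and $x \mapsto 1/x$ on $\mathbb R \setminus \{0\}$ are $C^r$). So the whole lemma follows from Lemma~\ref{lem:zeroset} together with elementary manipulations.
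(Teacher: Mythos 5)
Your proposal is correct and is essentially the paper's own argument: the paper also invokes Lemma \ref{lem:zeroset} for $X$ and $Y$ and directly defines $f(x)=\frac{g(x)^2}{g(x)^2+h(x)^2}$, which is exactly your function after the squaring step. No difference in substance.
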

\begin{proof}
There exist definable $C^r$ functions $g,h:M \rightarrow \mathbb R$ with $g^{-1}(0)=X$ and $h^{-1}(0)=Y$ by Lemma \ref{lem:zeroset}. The function $f:M \rightarrow [0,1]$ defined by $f(x)=\frac{g(x)^2}{g(x)^2+h(x)^2}$ satisfies the requirement.
\end{proof}

\begin{lem}\label{lem:middle}
Let $M$ be a definable $C^r$ manifold with $0 \leq r < \infty$. 
Let $C$ and $U$ be closed and open definable subsets of $M$, respectively. 
Assume that $C$ is contained in $U$.
Then, there exists an open definable subset $V$ of $M$ with $C \subset V \subset \overline{V} \subset U$.
\end{lem}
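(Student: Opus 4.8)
The plan is to reduce the statement to the separation lemma just established. Since $C$ is closed and $U$ is open with $C \subset U$, the sets $C$ and $U^c$ are disjoint closed definable subsets of $M$. I would apply Lemma \ref{lem:sep} to the pair $C$, $U^c$ to obtain a definable $C^r$ function $f : M \rightarrow [0,1]$ with $f^{-1}(0) = C$ and $f^{-1}(1) = U^c$.

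Next I would set $V = \{x \in M \;|\; f(x) < 1/2\}$. This is a definable open subset of $M$ because $f$ is definable and continuous. Since $f$ vanishes on $C$, we have $C \subset V$. By continuity of $f$, the closure $\overline{V}$ is contained in the closed set $\{x \in M \;|\; f(x) \leq 1/2\}$, which is in turn contained in $\{x \in M \;|\; f(x) < 1\} = M \setminus f^{-1}(1) = M \setminus U^c = U$. Hence $C \subset V \subset \overline{V} \subset U$, which is exactly what is required.

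The argument is essentially immediate once Lemma \ref{lem:sep} is available, so there is no real obstacle; the only point that deserves a moment's care is the inclusion $\overline{V} \subset U$, which relies on the fact that the sublevel set $\{f \leq 1/2\}$ is closed and strictly separated from the level set $\{f = 1\}$. Degenerate cases cause no trouble: if $U = M$ then $U^c = \emptyset$ and Lemma \ref{lem:sep} still yields an appropriate $f$ (equivalently one may just take $V = M$), and if $C = \emptyset$ one may take $V = \emptyset$.
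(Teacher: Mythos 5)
Your argument is correct and is exactly the proof the paper gives: apply Lemma \ref{lem:sep} to the disjoint closed sets $C$ and $U^c$ and take $V=\{x\in M\;|\;f(x)<\tfrac12\}$. The extra verification that $\overline{V}\subset\{f\le \tfrac12\}\subset U$ is a welcome clarification of a step the paper leaves implicit.
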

\begin{proof}
There is a definable continuous function $h:M \rightarrow [0,1]$ with $h^{-1}(0)=C$ and $h^{-1}(1) = M \setminus U$ by Lemma \ref{lem:sep}.
The set $V=\{x \in M\,;\, h(x)<\frac{1}{2}\}$ satisfies the requirement.
\end{proof}

\begin{lem}[Fine definable open covering]\label{lem:covering}
Let $M$ be a definable $C^r$ manifold with $0 \leq r < \infty$. 
Let $\{U_i\}_{i=1}^q$ be a finite definable open covering of $M$.
For each $1 \leq i \leq q$, 
there exists a definable open subset $V_i$ of $M$ satisfying the following conditions:
\begin{itemize}
\item  the closure $\overline{V_i}$ in $M$ is contained in $U_i$ for each $1 \leq i \leq q$, and 
\item  the collection $\{V_i\}_{i=1}^q$ is again a finite definable open covering of $M$. 
\end{itemize}
\end{lem}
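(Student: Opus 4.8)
The plan is to induct on $q$ and to shrink one chart at a time using Lemma~\ref{lem:middle}. The base case $q=1$ is trivial: take $V_1 = M$. For the inductive step, suppose the statement holds for coverings by fewer than $q$ sets. Given $\{U_i\}_{i=1}^q$, first I would concentrate on $U_q$. The set $C_q := M \setminus \bigcup_{i=1}^{q-1} U_i$ is a definable closed subset of $M$, and since $\{U_i\}_{i=1}^q$ covers $M$ we have $C_q \subset U_q$. By Lemma~\ref{lem:middle} applied to the closed set $C_q$ and the open set $U_q$, there is a definable open $V_q$ with $C_q \subset V_q \subset \overline{V_q} \subset U_q$. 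Because $C_q \subset V_q$, the collection $\{U_1,\dots,U_{q-1},V_q\}$ still covers $M$: indeed any point not covered by $U_1,\dots,U_{q-1}$ lies in $C_q$, hence in $V_q$.

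Now the sets $U_1,\dots,U_{q-1}$ together with the fixed set $V_q$ form a definable open covering of $M$; I want to shrink $U_1,\dots,U_{q-1}$ while keeping $V_q$ as it is. To do this, set $C_i := M \setminus \left(V_q \cup \bigcup_{j \neq i,\, 1 \le j \le q-1} U_j\right)$ for each $1 \le i \le q-1$; again this is definable and closed, and it is contained in $U_i$ since $\{U_1,\dots,U_{q-1},V_q\}$ covers $M$. Apply the inductive hypothesis — or, equivalently, iterate the same one-step shrinking argument $q-1$ more times — to the covering $\{U_1,\dots,U_{q-1}\}$ of the definable manifold $M$, but tracking that at each stage the set $C_i$ computed relative to the already-shrunk sets $V_1,\dots,V_{i-1},U_{i+1},\dots,U_{q-1}$ and $V_q$ remains closed and inside $U_i$. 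Each step produces a definable open $V_i$ with $\overline{V_i} \subset U_i$ and preserves the covering property, yielding $\{V_1,\dots,V_q\}$ as required.

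The only genuine subtlety is bookkeeping: one must make sure that when $V_i$ is constructed, the sets whose union is subtracted to form $C_i$ are exactly the final versions $V_1,\dots,V_{i-1}$ (already shrunk) together with $V_q$ and the not-yet-shrunk $U_{i+1},\dots,U_{q-1}$, so that $M = V_1 \cup \dots \cup V_{i-1} \cup V_i \cup U_{i+1} \cup \dots \cup U_{q-1} \cup V_q$ is an invariant maintained throughout. Since $V_i \supset C_i$ by construction, the covering property is restored at each step, and after $q$ steps all of $U_1,\dots,U_q$ have been replaced by shrunken definable open sets with closures inside the originals. Everything in sight — complements, finite unions, and the sets produced by Lemma~\ref{lem:middle} — is definable, so definability is automatic; I expect no real obstacle beyond writing the induction cleanly.
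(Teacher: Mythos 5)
Your proposal is correct and is essentially the paper's own argument: both iterate the one-step shrinking, at each stage subtracting the union of the already-shrunk sets and the not-yet-touched sets to obtain a closed set inside the current $U_i$, and then applying Lemma~\ref{lem:middle}, while maintaining the covering invariant. The only difference is cosmetic (you start from $U_q$ and phrase it as induction on $q$ rather than a loop from $k=1$ to $q$).
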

\begin{proof}
We inductively construct $V_i$ so that $\overline{V_i} \subset U_i$ and $\{V_i\}_{i=1}^{k-1} \cup \{U_i\}_{i=k}^q$ is a finite definable open covering of $M$.
We fix a positive integer $k$ with $k \leq q$.
Set $C_k = M \setminus (\bigcup_{i=1}^{k-1} V_i \cup \bigcup_{i=k+1}^{q} U_i)$.
The set $C_k$ is a definable closed subset of $M$ contained in $U_k$.
There exists a definable open subset $V_k$ of $M$ with $C_k \subset V_k \subset \overline{V_k} \subset U_k$ by Lemma \ref{lem:middle}.
It is obvious that $\{V_i\}_{i=1}^{k} \cup \{U_i\}_{i=k+1}^q$ is a finite definable open covering of $M$.
\end{proof}

\begin{lem}[Partition of unity]\label{lem:unity}
Let $M $ be a definable $C^r$ manifold.
Given a finite open definable covering $\{U_i\}_{i=1}^q$ of $M$, 
there exist nonnegative definable $C^r$ functions $\lambda_i$ on $M$ for all $1 \leq i \leq q$ such that $\sum_{i=1}^q\lambda_i = 1$ and the closure of the set $\{x \in M\;|\; \lambda_i(x)>0\}$ is contained in $U_i$. 
\end{lem}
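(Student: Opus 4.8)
The plan is to reduce to the shrinking lemma (Lemma \ref{lem:covering}) together with the separation function of Lemma \ref{lem:sep}, and then normalize. First I would apply Lemma \ref{lem:covering} twice to obtain two successive shrinkings: definable open coverings $\{W_i\}_{i=1}^q$ and $\{V_i\}_{i=1}^q$ of $M$ with $\overline{V_i} \subset W_i$ and $\overline{W_i} \subset U_i$ for each $i$. Then for each $i$ the sets $\overline{V_i}$ and $M \setminus W_i$ are disjoint closed definable subsets of $M$, so Lemma \ref{lem:sep} supplies a definable $C^r$ function $\mu_i : M \to [0,1]$ with $\mu_i^{-1}(0) = M \setminus W_i$ and $\mu_i^{-1}(1) = \overline{V_i}$. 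In particular $\mu_i$ vanishes outside $W_i$, so the closure of $\{x \in M \mid \mu_i(x) > 0\}$ is contained in $\overline{W_i} \subset U_i$, and $\mu_i \equiv 1$ on $\overline{V_i}$.

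Next I would set $\mu = \sum_{i=1}^q \mu_i$. Since $\{V_i\}_{i=1}^q$ covers $M$, every point of $M$ lies in some $V_i$, where $\mu_i = 1$; hence $\mu(x) \geq 1 > 0$ for all $x \in M$. The function $\mu$ is a finite sum of definable $C^r$ functions, hence definable and $C^r$, and it is nowhere zero, so $1/\mu$ is again a definable $C^r$ function on $M$ (here we use $0 \leq r < \infty$ implicitly, or the convention in force in the paper; the case $r = \infty$, if relevant, is handled the same way since quotients of $C^\infty$ functions by nonvanishing $C^\infty$ functions are $C^\infty$). Define $\lambda_i = \mu_i / \mu$ for $1 \leq i \leq q$. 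Each $\lambda_i$ is a nonnegative definable $C^r$ function, $\sum_{i=1}^q \lambda_i = \mu/\mu = 1$, and $\{x \mid \lambda_i(x) > 0\} = \{x \mid \mu_i(x) > 0\}$, whose closure is contained in $U_i$. This gives the required partition of unity.

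The only mild subtlety — and the step I would be most careful about — is making sure each $\mu_i$ genuinely has support inside $U_i$; this is exactly why one shrinking is not enough and a double application of Lemma \ref{lem:covering} is needed, so that $\{\mu_i > 0\} \subset W_i$ has closure still inside $U_i$. Everything else is formal: definability and $C^r$-smoothness are preserved under finite sums and under division by a nowhere-vanishing function, and positivity of the denominator is immediate from the covering property. No genuine obstacle arises.
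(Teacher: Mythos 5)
Your proof is correct and follows essentially the same route as the paper: shrink the cover via Lemma \ref{lem:covering}, build definable $C^r$ bump functions supported in the shrunken sets, and normalize by their (nowhere-vanishing) sum. The paper gets by with a single shrinking, using Lemma \ref{lem:zeroset} to produce $f_i$ with $f_i^{-1}(0)=M\setminus V_i$ and setting $\lambda_i=f_i^2/\sum_j f_j^2$; your second shrinking and use of Lemma \ref{lem:sep} are harmless but not needed.
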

\begin{proof}
Let $\{V_i\}_{i=1}^q$ be a finite definable open covering given in Lemma \ref{lem:covering}.   
There exists a $C^r$ definable function $f_i$ on $M$ with $f_i^{-1}(0)=M \setminus V_i$ by Lemma \ref{lem:zeroset}.
Set $\lambda_i=f_i^2/\sum_{j =1}^q f_j^2$.
The definable $C^r$ functions $\lambda_i$ on $M$ satisfy the requirements.
\end{proof}

The semialgebraic counterpart to the following lemma is found in 
\begin{lem}\label{lem:cover01}
Let $X$ be a definable subset of $\mathbb R^n$ and $\{V_j\}_{j=1}^p$ be a finite definable open covering of $X \times [0,1]$.
Then, there exist a finite definable open covering $\{U_i\}_{i=1}^q$ of $X$ and finite definable $C^r$ functions $0= \varphi_{i,0} < \cdots < \varphi_{i,k} < \cdots < \varphi_{i,r_i}=1$ on $U_i$ such that, for any $1 \leq i \leq q$ and $1 \leq k \leq r_i$, the definable set
\begin{equation*}
\{(x,t) \in U_i \times [0,1] \;|\; \varphi_{i,k-1}(x) \leq t \leq \varphi_{i,k}(x)\}
\end{equation*}
is contained in $V_j$ for some $1 \leq j \leq p$.
\end{lem}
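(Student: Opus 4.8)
The plan is to read the data off a definable $C^r$ cell decomposition of $X\times[0,1]$ adapted to $V_1,\dots,V_p$: over each base cell the decomposition already cuts $[0,1]$ into graphs and bands, each lying in a single $V_j$; I will turn this into a subdivision of $[0,1]$ into \emph{closed} slabs lying in a single $V_j$ by thickening the graphs (using that the $V_j$ are open) and shrinking the bands; and then I will fatten each base cell to an open subset of $X$ by extending the subdividing functions and shrinking the domain. Concretely, I would first take a definable $C^r$ cell decomposition of $\mathbb R^{n+1}$ compatible with $X\times[0,1]$ and with each of $V_1,\dots,V_p$ (available in any o-minimal structure; see \cite{vdD,vdDM}). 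Its cells contained in $X\times[0,1]$ partition $X\times[0,1]$, and their projections to $\mathbb R^n$ are finitely many cells $D_1,\dots,D_s$ partitioning $X$. Over a fixed $D_\ell$ the cells lying above it are the graphs of definable $C^r$ functions $\xi_{\ell,0}<\cdots<\xi_{\ell,m_\ell}$ on $D_\ell$ together with the bands between consecutive graphs; since the fibre over each point of $D_\ell$ is $[0,1]$ and cells are points or open intervals, $\xi_{\ell,0}\equiv 0$ and $\xi_{\ell,m_\ell}\equiv 1$. As the decomposition is compatible with the $V_j$ and these cover $X\times[0,1]$, each such graph and each such band is contained in some $V_j$.

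Fix $\ell$. Since the $V_j$ are open, I can choose a definable $C^r$ function $\varepsilon_\ell\colon D_\ell\to(0,\infty)$ so small that, for each $k$, the closed tube $\{(x,t):x\in D_\ell,\ |t-\xi_{\ell,k}(x)|\le\varepsilon_\ell(x)\}$ lies in the $V_j$ containing the graph of $\xi_{\ell,k}$ (with the obvious one-sided versions at $t=0$ and $t=1$), and moreover $2\varepsilon_\ell(x)<\xi_{\ell,k}(x)-\xi_{\ell,k-1}(x)$ for all $x\in D_\ell$ and $1\le k\le m_\ell$. Each of these is a demand that $\varepsilon_\ell$ lie below a given positive definable function, and such a positive definable $C^r$ minorant exists by a routine argument (the maximal admissible tube radius is lower semicontinuous and positive on $D_\ell$, hence admits a positive definable Lipschitz minorant, which one approximates by a definable $C^r$ one). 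Listing
\[
0,\ \varepsilon_\ell,\ \xi_{\ell,1}-\varepsilon_\ell,\ \xi_{\ell,1}+\varepsilon_\ell,\ \dots,\ \xi_{\ell,m_\ell-1}+\varepsilon_\ell,\ 1-\varepsilon_\ell,\ 1
\]
then yields definable $C^r$ functions $0=\varphi_{\ell,0}<\varphi_{\ell,1}<\cdots<\varphi_{\ell,r_\ell}=1$ on $D_\ell$, and each closed slab $\{(x,t):x\in D_\ell,\ \varphi_{\ell,k-1}(x)\le t\le\varphi_{\ell,k}(x)\}$ lies in a single $V_j$: it is either a tube around a graph, or a band shrunk strictly inside $(\xi_{\ell,k-1},\xi_{\ell,k})$.

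Finally I would pass from the $D_\ell$ to an open cover of $X$. Extend $\varphi_{\ell,1},\dots,\varphi_{\ell,r_\ell-1}$ to definable $C^r$ functions $\widehat\varphi_{\ell,k}$ on a definable open neighbourhood of $D_\ell$ in $\mathbb R^n$ (possible as $D_\ell$ is a definable $C^r$ submanifold of $\mathbb R^n$; for $r=0$ this is definable Tietze extension) and set $\widehat\varphi_{\ell,0}\equiv 0$, $\widehat\varphi_{\ell,r_\ell}\equiv 1$. On $D_\ell$ the chain $0<\widehat\varphi_{\ell,1}<\cdots<\widehat\varphi_{\ell,r_\ell-1}<1$ holds and each closed slab over $D_\ell$ lies in an open $V_j$; since the slab over a point is a compact interval varying continuously with the point, the tube lemma shows that the set of points of the neighbourhood at which all these inequalities and all the slab containments still hold is open in $\mathbb R^n$ and contains $D_\ell$. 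Intersecting it with $X$ gives a definable open subset $U_\ell$ of $X$ containing $D_\ell$ on which $0=\widehat\varphi_{\ell,0}<\cdots<\widehat\varphi_{\ell,r_\ell}=1$ are definable $C^r$ functions and every slab $\{(x,t)\in U_\ell\times[0,1]:\widehat\varphi_{\ell,k-1}(x)\le t\le\widehat\varphi_{\ell,k}(x)\}$ lies in some $V_j$. Since $X=\bigcup_\ell D_\ell\subseteq\bigcup_\ell U_\ell$, the finite family $\{U_\ell\}_{\ell=1}^s$ with these functions is as required.

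I expect the main obstacle to be the middle step: the cell decomposition only guarantees that graphs and bands — not closed interval slabs — lie in a single $V_j$, so one is forced to thicken graphs into tubes and shrink bands simultaneously, with a single $C^r$ radius subject to finitely many ``strictly below a positive definable function'' constraints; the one genuinely external input is the existence of positive definable $C^r$ minorants of positive definable (or lower semicontinuous) functions. By comparison, the step that might look like the real difficulty — turning the resulting partition of $X$ into an honest \emph{open} cover — is cheap, since the slabs built over each base cell already carry enough slack to survive on a neighbourhood via the tube lemma.
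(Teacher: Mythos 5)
Your overall architecture is the same as the paper's: take a definable $C^r$ cell decomposition compatible with $X\times[0,1]$ and the $V_j$, cut $[0,1]$ over each base cell $D_\ell$ into closed slabs each lying in a single $V_j$, then fatten $D_\ell$ to a definable open subset of $X$. The middle step, which you single out as the main obstacle, is in fact fine and can even be done with no $\varepsilon$ at all: since each $V_j$ is open and the decomposition partitions $V_j$, the open band adjacent to a graph $\psi_{\ell,k}$ must lie in the \emph{same} $V_j$ that contains that graph, so the slabs bounded by the graphs and the midpoints $\tfrac{1}{2}(\psi_{\ell,k}+\psi_{\ell,k+1})$ already work — this is the paper's trick. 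Your tube-radius variant is also correct as sketched: the maximal admissible radius is definable, positive and lower semicontinuous on $D_\ell$, so a Pasch--Hausdorff (Lipschitz) minorant followed by Escribano approximation does produce the required positive definable $C^r$ function $\varepsilon_\ell$; it is just more machinery than is needed.

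The genuine gap is in the step you call cheap. You need to extend $\varphi_{\ell,1},\dots,\varphi_{\ell,r_\ell-1}$ to definable $C^r$ functions on a definable open neighbourhood of $D_\ell$ in $\mathbb R^n$, \emph{agreeing with the originals on $D_\ell$} (your covering argument uses $D_\ell\subset U_\ell$, which needs exact agreement). For $r=0$ this follows from definable Tietze after passing to the open set in which the locally closed cell $D_\ell$ is closed, but for $r\geq 1$ there is no off-the-shelf ``definable $C^r$ extension from a definable $C^r$ submanifold'' to invoke: the natural device, composing with the nearest-point projection onto $D_\ell$, is only of class $C^{r-1}$ (one derivative is lost in the normal-bundle construction), and replacing that projection by a $C^r$ approximation destroys the property of restricting to the identity on $D_\ell$. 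This is exactly where the paper does its real work: exploiting the inductive graph/band description of the cell, it builds a definable open set $W_\ell\supset D_\ell$ and a definable $C^r$ map $\eta_\ell:W_\ell\to D_\ell$ whose restriction to $D_\ell$ is the identity, and sets $\widehat\varphi_{\ell,k}=\Psi_{\ell,k}\circ\eta_\ell$; the openness of the set where all slab containments persist is then proved by a tube-lemma-type estimate, as you propose. So your outline is sound, but the extension step must be replaced by (or justified via) such a cell-wise $C^r$ retraction rather than asserted; your assessment of where the difficulty lies is inverted relative to the actual proof.
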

\begin{proof}
Let $\pi:\mathbb R^n \times \mathbb R \rightarrow \mathbb R^n$ be the projection onto the first $n$ coordinates.
Apply the $C^r$ cell decomposition theorem \cite[Chapter 7, Theorem 3.2 and Chapter 7, Exercise 3.3]{vdD} to $\mathbb R^{n} \times \mathbb R$, then we have a definable $C^r$ cell decomposition $\{C_i\}_{i=1}^s$ partitioning $X \times [0,1]$ and $V_1 ,\ldots, V_p$.
Let $\{D_i\}_{i=1}^q$ be the subfamily of cells in $\{\pi(C_i)\}_{i=1}^s$ contained in $X$.
There exist definable $C^r$ functions $\psi_{i,0} = - \infty < \psi_{i,1} < \cdots < \psi_{i,l_i} < \psi_{i,l_i+1}=\infty$ on $D_i$ such that the cells contained in $X \times \mathbb R$ are one of the following forms:
\begin{align*}
&\{(x,t) \in D_i \times \mathbb R\;|\; t = \psi_{i,k}(x)\}\text{,}\\
&\{(x,t) \in D_i \times \mathbb R\;|\;  \psi_{i,k-1}(x) < t < \psi_{i,k}(x)\}\text{.}
\end{align*}
Since the cell decomposition partitions $X \times [0,1]$, there exist integers $k$ and $k'$ with $\psi_{i,k}=0$ and $\psi_{i,k'}=1$.
Taking a subsequence, we may assume that $0=\psi_{i,0} < \psi_{i,1} < \cdots < \psi_{i,l_i}=1$ on $D_i$.
Set $r_i=2l_i$ and define definable $C^r$ functions $\Psi_{i,k}$ on $D_i$ by
\begin{equation*}
\Psi_{i,k}(x)=\left\{\begin{array}{ll}
\psi_{i,k/2}(x) & \text{ if } k \text{ is even, }\\
\frac{\psi_{i,(k-1)/2}(x)+\psi_{i,(k+1)/2}(x)}{2} & \text{ otherwise.}
\end{array}\right.
\end{equation*} 
 
 We first show the following claim:
  \medskip
  
 {\textbf{Claim.}}
 For any $1 \leq i \leq q$ and $1 \leq k \leq r_i$, there exists a positive integer $j(i,k)$ such that the definable set
 \begin{equation*}
 \{(x,t) \in D_i \times [0,1] \;|\; \Psi_{i,k-1}(x) \leq t \leq \Psi_{i,k}(x)\} 
 \end{equation*}
is contained in $V_{j(i,k)}$.
\medskip

We demonstrate the above claim.
One of $k-1$ and $k$ is an even number. 
We assume that $k-1$ is even.
We can show the claim in the same way in the other case.
Since $\{V_j \}_{j=1}^p$ is an open covering and the decomposition partitions $V_1, \ldots V_p$, there exists $1 \leq j(i,k) \leq p$ such that the set $\{(x,t) \in D_i \times [0,1] \;|\; t = \Psi_{i,k-1}(x)=\psi_{i,(k-1)/2}(x)\}$ is contained in $V_{j(i,k)}$.
The set $\{(x,t) \in D_i \times \mathbb R\;|\;  \psi_{i,(k-1)/2}(x) < t < \psi_{i,(k+1)/2}(x)\}$ is contained in $V_{j(i,k)}$ because $V_{j(i,k)}$ is open and the decomposition partitions $V_{j(i,k)}$.
Hence, the definable set $\{(x,t) \in D_i \times [0,1] \;|\; \Psi_{i,k-1}(x) \leq t \leq \Psi_{i,k}(x)\}$ is contained in $V_{j(i,k)}$.
We have demonstrated the claim.
\medskip

Let $\pi_l:\mathbb R^n \rightarrow \mathbb R^l$ be the projection onto the first $l$ coordinates.
We inductively define definable open subsets $W_{i,l}$ of $\mathbb R^l$ and definable $C^r$ maps $\eta_{i,l}: W_{i,l} \rightarrow \pi_l(D_i)$ as follows:
When $l=1$, the definable set $\pi_1(D_i)$ is a single point set $\{a\}$ or an open interval $I$.
Set $W_{i,1}=\mathbb R$ and $\eta_{i,1}(x)=a$ if $\pi_1(D_i)$ is a single point set $\{a\}$.
Set $W_{i,1}=I$ and $\eta_{i,1}(x)=x$ if $\pi_1(D_i)$ is an open interval $I$.
When $l>1$, the definable set $\pi_l(D_i)$ is one of the following forms:
 \begin{align*}
&\{(x,t) \in \pi_{l-1}(D_i) \times \mathbb R\;|\; t = f(x)\}\text{,}\\
&\{(x,t) \in \pi_{l-1}(D_i) \times \mathbb R\;|\;  f_1(x) < t < f_2(x)\}\text{.}
\end{align*}
Here, $f,f_1$ and $f_2$ are definable $C^r$ functions on $\pi_{l-1}(D_i)$.
Set $W_{i,l}= W_{i,l-1} \times \mathbb R$ and $\eta_{i,l}(x,t)=(\eta_{i,l-1}(x),f(\eta_{i,l-1}(x)))$ in the former case.
Set $W_{i,l}=\{(x,t) \in W_{i,l-1} \times \mathbb R\;|\; f_1(\eta_{i,l-1}(x)) < t < f_2(\eta_{i,l-1}(x))\}$ and $\eta_{i,l}(x,t)=(\eta_{i,l-1}(x),t)$ in the latter case.

Set $W_i = W_{i,n}$ and $\eta_i = \eta_{i,n}$.  
It is obvious that $D_i$ is contained in $W_i$ and the restriction of $\eta_i$ to $D_i$ is the identity map.
For any $1 \leq i \leq q$ and $1 \leq k \leq r_i$, we define a definable set $X_{i,k}$ as follows: 
\begin{equation*}
X_{i,k}=\{x \in W_i\;|\; (x,t) \in V_{j(i,k)} \text{ for all } t \in \mathbb R \text{ with } \Psi_{i,k-1}(\eta_i(x)) \leq t \leq \Psi_{i,k}(\eta_i(x))\}\text{.}
\end{equation*}
It is obvious that $D_i$ is contained in $X_{i,k}$ by the above claim.
We show that $X_{i,k}$ is an open set.
Let $x \in X_{i,k}$ be fixed.
Consider the closed definable subset 
\begin{equation*}
Y_{i,k}(x)=\{t \in \mathbb R\;|\; \Psi_{i,k-1}(\eta_i(x)) \leq t \leq  \Psi_{i,k}(\eta_i(x))\}
\end{equation*}
of $\mathbb R$.
We also set 
\begin{equation*}
Z_{i,k}(x)=\{(x,t) \in X_{i,k} \times \mathbb R\;|\; \Psi_{i,k-1}(\eta_i(x)) \leq t \leq  \Psi_{i,k}(\eta_i(x))\}\text{.}
\end{equation*}
The definable continuous function $\rho$ on the closed definable set $Y_{i,k}(x)$ is defined as the distance between the point $(x,t)$ and the closed set $V_{j(i,k)}^c$.
Since $Y_{i,k}(x)$ is compact, the function $\rho$ takes the minimum $m$.
The minimum $m$ is positive because the intersection of $Z_{i,k}(x)$ with $V_{j(i,k)}^c$ is empty.
Take $y \in W_i$ sufficiently close to $x$ satisfying the following conditions:
\begin{itemize}
\item $\|y-x\|_n<\frac{m}{2}$, where $\| \cdot \|_n$ denotes the Euclidean norm in $\mathbb R^n$,
\item $\Psi_{i,k-1}(\eta_i(y)) < \Psi_{i,k}(\eta_i(x))$, 
\item $\Psi_{i,k}(\eta_i(y)) > \Psi_{i,k-1}(\eta_i(x))$, 
\item $|\Psi_{i,k-1}(\eta_i(y)) - \Psi_{i,k-1}(\eta_i(x))| < \frac{m}{2}$ and 
\item $|\Psi_{i,k}(\eta_i(y)) - \Psi_{i,k}(\eta_i(x))| < \frac{m}{2}$.
\end{itemize}
We lead a contradiction assuming that $y \not\in X_{i,k}$.
There exists a real number $t$ with $\Psi_{i,k-1}(\eta_i(y)) \leq t \leq \Psi_{i,k}(\eta_i(y))$ and $(y,t) \not\in V_{j(i,k)}$ by the assumption.
If $t \in Y_{i,k}(x)$, the distance between the point $(y,t)$ and $Z_{i,k}(x)$ is $\|y-x\|_n$ and less than $m$.
It is a contradiction to the assumption that $(y,t) \not\in V_{j(i,k)}$.
In the other case, we have 
\begin{equation*}
|t-\Psi_{i,k-1}(\eta_i(x))| \leq |\Psi_{i,k-1}(\eta_i(y)) - \Psi_{i,k-1}(\eta_i(x))| < \frac{m}{2}
\end{equation*}
or 
\begin{equation*}
|t-\Psi_{i,k}(\eta_i(x))| \leq |\Psi_{i,k}(\eta_i(y)) - \Psi_{i,k}(\eta_i(x))| < \frac{m}{2}\text{.}
\end{equation*}
The distance between the point $(y,t)$ and $Z_{i,k}(x)$ is less than $\sqrt{\left(\frac{m}{2}\right)^2+\left(\frac{m}{2}\right)^2}$.
It contradicts to the definition of $m$.
We have demonstrated that $X_{i,k}$ is open.

Set $U_i=\bigcap_{k=1}^{r_i} X_{i,k}$ and $\varphi_{i,k}=\Psi_{i,k} \circ \eta_i|_{U_i}$ for $1 \leq i \leq q$.
The set $U_i$ is a definable open set and $\varphi_{i,k}$ is a definable $C^r$ function on $U_i$.
The set $\{(x,t) \in U_i \times [0,1] \;|\; \varphi_{i,k-1}(x) \leq t \leq \varphi_{i,k}(x)\}$ is contained in $V_{j(i,k)}$ by the definition of $X_{i,k}$.
Since $X=\bigcup_{i=1}^qD_i$ and $D_i \subset U_i$, we have $X \subset \bigcup_{i=1}^q U_i$. 
\end{proof}

\section{Equivalence of definable $C^r$ vector bundles with projective modules over the ring of definable $C^r$ functions}\label{sec:equi_vec}

We define definable $C^r$ vector bundles.
The definition is lengthy, but it is almost the same as the definition of semialgebraic vector bundles given in \cite[Definition 12.7.1]{BCR}.

\begin{definition}[Definable $C^r$ vector bundle]
Let $\xi=(E,p,M)$ be an $\mathbb R$-vector bundle of rank $d$ over a  definable $C^r$ manifold $M$.
A family of local trivializations $(U_i,\varphi_i:U_i \times \mathbb R^d \rightarrow p^{-1}(U_i))_{i \in I}$ of the vector bundle $\xi$ is a \textit{definable $C^r$ atlas} of $\xi$  if $(U_i)_{i \in I}$ is a finite open definable covering of $M$ and, for all pairs $(i,j) \in I \times I$, the maps $\varphi_i^{-1} \circ \varphi_j|_{(U_i \cap U_j) \times \mathbb R^d}$ are definable $C^r$ maps.
Two definable $C^r$ atlases are equivalent if their union is still a definable $C^r$ atlas. 
A \textit{definable $C^r$ vector bundle} is a vector bundle equipped with an equivalence class of definable $C^r$ atlases.
By abuse of notation, we denote a definable $C^r$ vector bundle by $\xi=(E,p,M)$ specifying the total space, the base space and the map between them, but without specifying the definable $C^r$ atlas defining its structure.
The \textit{trivial bundle of rank $d$ over $M$} is the triple $(M \times \mathbb R^d, p, M)$, where $p$ is the natural projection from $M \times \mathbb R^d$ onto $M$.
The notation $\epsilon_M^d$ denotes the trivial bundle of rank $d$ over $M$.
A section $s$ of the vector bundle $\xi$ is a \textit{definable $C^r$ section} if the maps $\varphi_i^{-1} \circ s|_{U_i}:U_i \rightarrow U_i \times \mathbb R^d$ are definable $C^r$ maps for all $i \in I$.

Let $\xi=(E,p,M)$ and $\xi'=(E',p',M')$ be two definable $C^r$ vector bundles.
Let $(U_i,\phi_i)_{i \in I}$ and $(U'_j,\phi'_j)_{j \in J}$ be definable $C^r$ atlases of $\xi$ and $\xi'$, respectively.
A morphism between vector bundles $\psi=(u,f):\xi \rightarrow \xi'$ is a \textit{definable $C^r$ morphism} if $(\phi_j')^{-1}\circ u \circ\phi_i|_{(U_i \cap U_j') \times \mathbb R^d}$ are definable $C^r$ maps for all pairs $(i,j) \in I \times J$, and the map $f:M \rightarrow M'$ between base spaces are also definable and of class $C^r$.
Here, the map $u$ is a map between total spaces.
When two definable $C^r$ manifolds $M$ and $M'$ are identical and the map between base spaces is the identity map on $M$, we call it a \textit{definable $C^r$ $M$-morphism}.
In this case, we use the same symbol for the morphism and the map between total spaces by abuse of notation.
Two definable $C^r$ vector bundles $\xi$ and $\xi'$ over a  definable $C^r$ manifold $M$ are \textit{definably $C^r$ isomorphic} if there exist two definable $C^r$ $M$-morphism $\psi:\xi \rightarrow \xi'$ and $\psi':\xi' \rightarrow \xi$ such that the compositions of two morphisms $\psi \circ \psi'$ and $\psi' \circ \psi$ are the identity morphisms.
We call $\psi$ and $\psi'$ \textit{definable $C^r$ isomorphisms}.
\end{definition}

\begin{rem}\label{rem:vector_bundle}
The notation $\gl(d,\mathbb R)$ denotes the general linear group of degree $d$ with entries in $\mathbb R$. 
Let $\xi=(E,p,M)$ be a definable $C^r$ vector bundle of rank $d$ over a definable $C^r$ manifold $M$.
Let $(U_i,\phi_i:U_i \times \mathbb R^d \rightarrow p^{-1}(U_i))_{i=1}^q$ be a definable $C^r$ atlas of $\xi$.
The map $\phi_i|_{(U_i \cap U_j) \times \mathbb R^d} \circ \left(\phi_j|_{(U_i \cap U_j) \times \mathbb R^d}\right)^{-1}: (U_i \cap U_j) \times \mathbb R^d \rightarrow (U_i \cap U_j) \times \mathbb R^d$ induces a collection of definable $C^r$ maps $\{g_{ij}: U_i \cap U_j \rightarrow \gl(d,\mathbb R)\}$ satisfying $g_{ii} = \operatorname{id}$ and $g_{ij}(x)g_{jk}(x)=g_{ik}(x)$ for any $x \in U_i \cap U_j \cap U_k$.
The function $g_{ij}: U_i \cap U_j \rightarrow \gl(d,\mathbb R)$ is called a \textit{transition function} of $\xi$. 

On the other hand, we can construct a definable $C^r$ vector bundle of rank $d$ in the same way as the case of general topological vector bundles \cite[Section 4.2]{H} if a finite definable open covering $\{U_i\}_{i=1}^q$ and a family of definable $C^r$ map $\{g_{ij}: U_i \cap U_j \rightarrow \gl(d,\mathbb R)\}$ are given and satisfy the above conditions.
\end{rem}

The proof of the following proposition is straightforward and we omit it.
\begin{prop}\label{prop:hometc}
Let $r$ be a nonnegative integer.
Let $M$ and $N$ be definable $C^r$ manifolds.
\begin{enumerate}[(i)]
\item If $\xi$ is a definable $C^r$ vector bundle over $M$, and $f:N \rightarrow M$ is a definable $C^r$ map, then $f^*(\xi)$ is a definable $C^r$ vector bundle over $N$.
\item If $\xi$ and $\eta$ are definable $C^r$ vector bundles over $M$, the Whitney sum $\xi \oplus \eta$, the tensor product $\xi \otimes \eta$, the dual $\xi^{\vee}$ and $\operatorname{Hom}(\xi,\eta)$ are definable $C^r$ vector bundles over $M$.
\end{enumerate}
\end{prop}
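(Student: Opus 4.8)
The plan is to reduce everything to the description of a definable $C^r$ vector bundle by transition functions recorded in Remark \ref{rem:vector_bundle}: a finite definable open covering $\{U_i\}_{i=1}^q$ of a definable $C^r$ manifold together with a family of definable $C^r$ maps $g_{ij}:U_i \cap U_j \to \gl(d,\mathbb R)$ with $g_{ii}=\operatorname{id}$ and $g_{ij}(x)g_{jk}(x)=g_{ik}(x)$ on $U_i \cap U_j \cap U_k$ determines a definable $C^r$ vector bundle of rank $d$, and every such bundle arises this way. Thus for each construction it suffices to exhibit a covering and a cocycle of definable $C^r$ transition functions, and then to observe that the resulting bundle is canonically the one asked for.

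For (i), fix a definable $C^r$ atlas of $\xi$ with covering $\{U_i\}_{i=1}^q$ and transition functions $g_{ij}$. Then $\{f^{-1}(U_i)\}_{i=1}^q$ is a finite definable open covering of $N$, since $f$ is definable and continuous, and the composites $g_{ij}\circ f|_{f^{-1}(U_i)\cap f^{-1}(U_j)}:f^{-1}(U_i)\cap f^{-1}(U_j) \to \gl(d,\mathbb R)$ are definable $C^r$ maps, being composites of definable $C^r$ maps; the cocycle identities are inherited pointwise. The bundle so obtained is canonically the pullback $f^*(\xi)$, so $f^*(\xi)$ is a definable $C^r$ vector bundle over $N$.

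For (ii), first pass to a common covering. If $\{U_i\}$ with transition functions $g_{ij}\in \gl(d,\mathbb R)$ trivializes $\xi$ and $\{U'_k\}$ with transition functions $h'_{k\ell}\in \gl(e,\mathbb R)$ trivializes $\eta$, then $\{U_i \cap U'_k\}$ is a finite definable open covering over which both bundles are trivialized, the transition functions being restrictions of the given ones (restrictions of definable $C^r$ maps are definable $C^r$). Relabelling, we may assume $\xi$ and $\eta$ have transition functions $g_{ij}$ and $h_{ij}$ over a common covering $\{U_i\}$. Then $\xi \oplus \eta$, $\xi \otimes \eta$, $\xi^{\vee}$ and $\operatorname{Hom}(\xi,\eta)$ are given over $\{U_i\}$ by the transition functions $x \mapsto g_{ij}(x)\oplus h_{ij}(x)$ (block-diagonal), $x\mapsto g_{ij}(x)\otimes h_{ij}(x)$ (Kronecker product), $x \mapsto {}^t\!(g_{ij}(x)^{-1})$, and $x \mapsto {}^t\!(g_{ij}(x)^{-1})\otimes h_{ij}(x)$, respectively. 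Each is a definable $C^r$ map into the relevant general linear group: block-diagonal juxtaposition, the Kronecker product, and the transpose are polynomial (indeed linear) in the matrix entries, while matrix inversion $\gl(d,\mathbb R)\to\gl(d,\mathbb R)$ is definable and of class $C^r$ — even $C^\infty$ — since by Cramer's rule its entries are rational functions with denominator the nowhere-vanishing determinant; hence the indicated composites with $g_{ij}$ and $h_{ij}$ are definable $C^r$. The cocycle conditions for the new transition functions follow from the functoriality of these operations, e.g. $(g_{ij}\oplus h_{ij})(g_{jk}\oplus h_{jk})=(g_{ij}g_{jk})\oplus(h_{ij}h_{jk})$, and similarly for $\otimes$, for $X\mapsto{}^t\!(X^{-1})$, and for their combination. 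By the converse direction of Remark \ref{rem:vector_bundle} each such cocycle defines a definable $C^r$ vector bundle, and a direct check identifies it with the usual fiberwise construction (with $\operatorname{Hom}(\xi,\eta)\cong\xi^{\vee}\otimes\eta$).

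I do not expect a genuine obstacle here — this is why it is labelled straightforward — and the only point deserving an explicit sentence is that matrix inversion is a definable $C^r$ map on $\gl(d,\mathbb R)$, which is immediate from Cramer's rule. The remaining bookkeeping, namely the passage to a common refinement of two atlases and the verification that the $\operatorname{Hom}$-cocycle coincides with that of $\xi^{\vee}\otimes\eta$, is routine.
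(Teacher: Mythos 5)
Your proof is correct, and since the paper explicitly omits the proof of this proposition (declaring it straightforward), your transition-function argument via Remark \ref{rem:vector_bundle} is precisely the standard route the author had in mind: pull back or combine the cocycles, note that composition, block sum, Kronecker product, transposition and matrix inversion (by Cramer's rule) preserve definability and class $C^r$, and check the cocycle identities functorially. No gaps; the only point you rightly single out as needing a word is the definable $C^r$ (indeed Nash/$C^\infty$) nature of inversion on $\gl(d,\mathbb R)$.
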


The following two lemmas are important and used several times in this paper.
\begin{lem}\label{lem:sections}
Given a definable $C^r$ vector bundle $\xi=(E,p,M)$ of rank $d$, 
there exist a finite number of definable $C^r$ sections $s_1, \ldots, s_m$ of $\xi$ such that, for any $x \in M$, the vectors $s_1(x), \ldots, s_m(x)$ generate the fiber $\xi_x=p^{-1}(x)$. 
\end{lem}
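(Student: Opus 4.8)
The plan is to use the finiteness of the definable $C^r$ atlas together with the partition of unity from Lemma \ref{lem:unity}. Fix a definable $C^r$ atlas $(U_i,\varphi_i:U_i\times\mathbb R^d\to p^{-1}(U_i))_{i=1}^q$ of $\xi$. By Lemma \ref{lem:unity} there are nonnegative definable $C^r$ functions $\lambda_1,\dots,\lambda_q$ on $M$ with $\sum_{i=1}^q\lambda_i=1$ and with the closure of $\{x\in M\;|\;\lambda_i(x)>0\}$ contained in $U_i$. For each $i$ let $e_1,\dots,e_d$ be the standard basis of $\mathbb R^d$, and consider the definable $C^r$ maps $U_i\to p^{-1}(U_i)$ given by $x\mapsto\varphi_i(x,e_k)$; these are local definable $C^r$ sections over $U_i$. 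Then define, for $1\le i\le q$ and $1\le k\le d$, a global map $s_{i,k}:M\to E$ by $s_{i,k}(x)=\lambda_i(x)\,\varphi_i(x,e_k)$ for $x\in U_i$ and $s_{i,k}(x)=0$ (the zero vector of $\xi_x$) for $x\notin U_i$.

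First I would check that each $s_{i,k}$ is a well-defined definable $C^r$ section of $\xi$. On $U_i$ it is visibly definable $C^r$, being a product of the definable $C^r$ function $\lambda_i$ with a definable $C^r$ section. Off the closed set $\overline{\{\lambda_i>0\}}$ it is identically zero, hence definable $C^r$ there as well; since these two open sets $U_i$ and $M\setminus\overline{\{\lambda_i>0\}}$ cover $M$ and the definitions agree on the overlap (where $\lambda_i=0$), the map $s_{i,k}$ is a globally defined definable $C^r$ section. One verifies the $C^r$-ness through the charts $\varphi_j$ in the usual way: $\varphi_j^{-1}\circ s_{i,k}|_{U_i\cap U_j}$ equals $x\mapsto(x,\lambda_i(x)\,\pi_2(\varphi_j^{-1}\varphi_i(x,e_k)))$, which is definable $C^r$ because the transition maps are.

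Next I would verify the spanning property. Fix $x\in M$. Since $\sum_i\lambda_i(x)=1$, there is some index $i$ with $\lambda_i(x)>0$, so in particular $x\in U_i$. For that $i$, the vectors $s_{i,1}(x),\dots,s_{i,d}(x)$ equal $\lambda_i(x)\varphi_i(x,e_1),\dots,\lambda_i(x)\varphi_i(x,e_d)$, which form a basis of the fiber $\xi_x=p^{-1}(x)$ since $\varphi_i(x,\cdot)$ is a linear isomorphism $\mathbb R^d\to\xi_x$ and $\lambda_i(x)\ne0$. Hence the full collection $\{s_{i,k}\}_{1\le i\le q,\,1\le k\le d}$, which has $m=qd$ members, generates every fiber. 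Relabelling these $qd$ sections as $s_1,\dots,s_m$ completes the construction.

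I do not anticipate a genuine obstacle here; the only point requiring a little care is the gluing argument showing $s_{i,k}$ is globally definable $C^r$, which is exactly why Lemma \ref{lem:unity} is stated with the closure of the support of $\lambda_i$ inside $U_i$ rather than merely the support itself — the closed set $\overline{\{\lambda_i>0\}}$ gives an honest open cover of $M$ by $U_i$ and its complement, on which the piecewise definition is unambiguous. Everything else is the standard topological construction transported verbatim to the definable $C^r$ category, using Proposition \ref{prop:hometc} and Remark \ref{rem:vector_bundle} only implicitly.
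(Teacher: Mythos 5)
Your construction is exactly the one in the paper: take a definable $C^r$ atlas, a partition of unity from Lemma \ref{lem:unity}, and glue the scaled frame sections $\varphi_i(x,\lambda_i(x)e_k)$ by zero outside $U_i$, then observe that at each point some $\lambda_i$ is positive so the corresponding $d$ sections span the fiber. The proposal is correct and follows the paper's proof essentially verbatim, just with the gluing and spanning steps spelled out in more detail.
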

\begin{proof}
Let $(U_i,\varphi_i:U_i \times \mathbb R^d \rightarrow p^{-1}(U_i))_{i=1}^q$ be a definable $C^r$ atlas of $\xi$ and $\{\lambda_i\}_{i =1}^q$ be a definable $C^r$ partition of unity subordinate to the covering $\{U_i\}_{i = 1}^q$ given in Lemma \ref{lem:unity}.
For any $1 \leq i \leq q$ and $1 \leq j \leq d$, define a definable $C^r$ section $s_{i,j}$ by $s_{i,j}(x)=\varphi_i(x,\lambda_i(x)e_j)$ for any $x \in U_i$ and $s_{i,j}(x)=\varphi_k(x,0)$ for any $x \in U_k \setminus U_i$ for all $k \not=i$, where $e_j$ is the $j$-th vector of the canonical basis of $\mathbb R^d$.
The sections $s_{i,j}$ obviously generate the vector space $p^{-1}(x)$ for any $x \in M$.
\end{proof}

\begin{lem}\label{lem:sections2}
Let $\xi=(E,p,M)$ be a definable $C^r$ vector bundle of rank $d$.
Consider a finite number of definable $C^r$ sections $s_1, \ldots, s_m$ of $\xi$ such that, for any $x \in M$, the vectors $s_1(x), \ldots, s_m(x)$ generate the fiber $\xi_x=p^{-1}(x)$. 
Then, for any definable $C^r$ section $s$ of $\xi$, there exist definable $C^r$ functions $c_1, \ldots, c_m$ on $M$ with $s = \sum_{j=1}^m c_j s_j$.
\end{lem}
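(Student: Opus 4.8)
The plan is to solve $s=\sum_j c_j s_j$ locally over trivialising charts by an explicit formula, and then to patch the local solutions together with a definable $C^r$ partition of unity. First I would fix a definable $C^r$ atlas $(U_i,\varphi_i\colon U_i\times\mathbb R^d\to p^{-1}(U_i))_{i=1}^q$ of $\xi$. Because $\varphi_i$ is linear on fibres, the equations $\varphi_i^{-1}(s_j(x))=(x,s^{(i)}_j(x))$ and $\varphi_i^{-1}(s(x))=(x,v^{(i)}(x))$ define definable $C^r$ maps $s^{(i)}_j,v^{(i)}\colon U_i\to\mathbb R^d$, and the hypothesis that $s_1(x),\dots,s_m(x)$ generate $\xi_x$ is precisely the statement that the $d\times m$ matrix $A_i(x)$ with columns $s^{(i)}_1(x),\dots,s^{(i)}_m(x)$ has rank $d$ for every $x\in U_i$ (in particular $m\ge d$). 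Then $A_i(x)\,{}^t\!A_i(x)$ is invertible, so I can form the pseudoinverse $A_i^{+}(x):={}^t\!A_i(x)\bigl(A_i(x)\,{}^t\!A_i(x)\bigr)^{-1}$, which is a definable $C^r$ map on $U_i$ (matrix inversion is rational, hence $C^\infty$, on invertible matrices) satisfying $A_i(x)A_i^{+}(x)=I_d$. Setting $c^{(i)}:=A_i^{+}v^{(i)}$ gives a definable $C^r$ map $U_i\to\mathbb R^m$ with $A_i(x)c^{(i)}(x)=v^{(i)}(x)$, i.e.\ $s=\sum_{j=1}^m c^{(i)}_j s_j$ on $U_i$.

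Next I would glue. Choosing a definable $C^r$ partition of unity $\{\lambda_i\}_{i=1}^q$ subordinate to $\{U_i\}_{i=1}^q$ as in Lemma \ref{lem:unity}, the function $\lambda_i c^{(i)}_j$ is definable and of class $C^r$ on $U_i$ and vanishes on $U_i\setminus\overline{\{\lambda_i>0\}}$; since $U_i$ together with $M\setminus\overline{\{\lambda_i>0\}}$ is a definable open cover of $M$ on which the two obvious definitions agree, $\lambda_i c^{(i)}_j$ extends to an element of $\cdfr(M)$ equal to $0$ outside $U_i$, still written $\lambda_i c^{(i)}_j$. I would then set $c_j:=\sum_{i=1}^q\lambda_i c^{(i)}_j\in\cdfr(M)$.

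The verification $s=\sum_j c_j s_j$ is pointwise: for $x\in M$ and each $i$ with $\lambda_i(x)>0$ we have $x\in U_i$ and $\sum_j c^{(i)}_j(x)s_j(x)=s(x)$, while the other indices contribute nothing, so $\sum_{j}c_j(x)s_j(x)=\sum_i\lambda_i(x)\sum_j c^{(i)}_j(x)s_j(x)=\sum_i\lambda_i(x)s(x)=s(x)$.

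The hard part is that the local solutions $c^{(i)}$ really do disagree on the overlaps $U_i\cap U_{i'}$, so they cannot be glued directly as $\mathbb R^m$-valued maps. What makes the patching legitimate is that the condition ``$\sum_j c_j(x)s_j(x)=s(x)$'' is affine-linear in $c(x)$ and hence preserved under convex combinations; this is exactly what the partition of unity provides, and it is the only place where the precise shape of the $c^{(i)}$ matters — any definable $C^r$ local right inverse of $A_i$ would do.
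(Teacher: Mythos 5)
Your proof is correct. It shares the overall architecture of the paper's argument: solve the equation $s=\sum_j c_j s_j$ locally over trivialising charts, then glue the local coefficient vectors with the definable $C^r$ partition of unity of Lemma \ref{lem:unity}, using exactly the affine-linearity of the condition in $c$ and the extension-by-zero argument (which you justify more carefully than the paper does, noting that $\lambda_i c^{(i)}_j$ vanishes on a neighbourhood of $\partial\{\lambda_i>0\}$ inside $U_i$). The one genuine difference is the local step. The paper refines the covering: for each chart it passes to the definable open sets $U_{i,J}$ on which a $d$-element subfamily $\{s_j\}_{j\in J}$ is a frame of $\xi$, writes $s$ in that frame via the trivialisation $\tau_i$, and pads the remaining coefficients with zeros. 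You instead keep the original atlas and invert globally on each chart using the Gram-matrix pseudoinverse $A_i^{+}={}^t\!A_i(A_i\,{}^t\!A_i)^{-1}$ of the full $d\times m$ matrix of coefficient columns, which is definable $C^r$ because inversion is rational on invertible matrices. This buys you a cleaner, formula-driven local solution with no combinatorics over subsets $J\subset\{1,\dots,m\}$ and no refinement of the cover; the paper's frame-selection device is slightly more laborious here but is the same mechanism it reuses later (e.g.\ in the proof of Theorem \ref{thm:main1_0}), which is presumably why it is phrased that way. Either local right inverse works, as you correctly observe.
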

\begin{proof}
Let $(U_i,\varphi_i:U_i \times \mathbb R^d \rightarrow p^{-1}(U_i))_{i=1}^q$ be a definable $C^r$ atlas of $\xi$.
Fix $1 \leq i \leq q$.
For any subset $J \subset \{1 ,\ldots, m \}$ of cardinality $d$, set 
\begin{equation*}
U_{i,J}=\{ x \in U_i\;|\; \text{the fiber } p^{-1}(x) \text{ is generated by } \{s_j(x) \;|\; j \in J\}\}\text{.}
\end{equation*}
The collection $\{U_{i,J}\}$ is a finite definable open covering of $M$.
The fiber $\xi_x = p^{-1}(x)$ is generated by $\{s_j(x)\}_{j \in J}$ for any $x \in U_{i,J}$.
Taking a finer finite definable open covering, we may assume that, 
$(U_i,\varphi_i:U_i \times \mathbb R^d \rightarrow p^{-1}(U_i))_{i=1}^q$ is a definable $C^r$ atlas and, for any $1 \leq i \leq q$, there exists a subset $J(i) \subset \{1 ,\ldots, m \}$ such that the fiber $\xi_x$ is generated by $\{s_j(x)\}_{j \in J(i)}$ for any $x \in U_i$.
Let $j_i(k)$ be the $k$-th smallest element of the subset $J(i) \subset \{1, \ldots, m\}$.
The map $\tau_i: \epsilon_{U_i}^d \rightarrow \xi|_{U_i}$ defined by $\tau_i(x,(a_1, \ldots, a_d))=\sum_{k=1}^d a_ks_{j_i(k)}(x)$ is a definable $C^r$ isomorphism.
Let $\Pi_i: U_i \times \mathbb R^d \rightarrow \mathbb R^d$ be the natural projection.
Let $\pi_k:\mathbb R^d \rightarrow \mathbb R$ be the projection onto the $k$-th coordinate for any $1 \leq k \leq d$. 
We have 
\begin{equation*}
s(x) = \sum_{k=1}^d \pi_k(\Pi_i(\tau_i^{-1}(s(x))))s_{j_i(k)}(x)
\end{equation*}
for any $x \in U_i$.
Define definable $C^r$ functions $b_{i,j}$ on $U_i$ as follows:
\begin{equation*}
b_{i,j}=\left\{\begin{array}{ll}
\pi_k(\Pi_i(\tau_i^{-1}(s(x)))) & \text{ if }j=j_i(k)\text{ for some }1 \leq k \leq d \text{,}\\
0 & \text{ otherwise.}
\end{array}\right.
\end{equation*}
For any $1 \leq i \leq q$, the collection of definable functions $\{b_{ij}\}_{j=1}^m$ satisfies the following equality: 
\begin{equation*}
s|_{U_i} = \sum_{j=1}^m b_{i,j}s_{j}|_{U_i}\text{.}
\end{equation*}

Let $\{\lambda_i\}_{i=1}^q$ be a definable $C^r$ partition of unity subordinate to the definable open covering $\{U_i\}_{i=1}^q$ given by Lemma \ref{lem:unity}.
For any $1 \leq i \leq q$ and $1  \leq k \leq m$,   the function $c_{i,k}: M \rightarrow \mathbb R$ is the definable $C^r$ function defined by 
\begin{equation*}
c_{i,k}(x) = \left\{\begin{array}{ll} \lambda_i(x)b_{i,k}(x) & \text{ if } x \in U_i \text{,}\\
0 & \text{ elsewhere. }
\end{array}\right.
\end{equation*}
The equality $s = \sum_{i=1}^q \left( \sum_{k=1}^m c_{i,k}s_{k}\right)$ follows from the following calculation:
\begin{align*}
\sum_{i=1}^q \left( \sum_{k=1}^m c_{i,k}(x)s_{k}(x)\right)
&=\sum_{1 \leq i \leq q, x \in U_i} \lambda_{i}(x) \left( \sum_{k=1}^m b_{i,k}(x)s_k(x)\right)\\ 
&= \sum_{1 \leq i \leq q, x \in U_i} \lambda_{i}(x)s(x) =\sum_{i=1}^q \lambda_{i}(x)s(x)\\
& = s(x)
\end{align*}
Set $c_j = \sum_{i=1}^q c_{i,j}$ for any $1 \leq j \leq m$.
We have $s = \sum_{j=1}^m c_j s_j$.
\end{proof}

The following lemma is similar to \cite[Theorem 12.1.7, Corollary 12.7.5]{BCR}. 
The proof is also similar.
\begin{lem}\label{lem:equiv}
Let $\xi=(E,p,M)$ be a definable $C^r$ vector bundle of rank $d$.
Then:
\begin{enumerate}[\upshape (i)]
\item There exists an injective definable $C^r$ morphism from $\xi$ into a trivial bundle $\epsilon_M^n$.
\item There exists another definable $C^r$ vector bundle $\xi'$ over $M$ such that the Whitney sum $\xi \oplus \xi'$ is definably $C^r$ isomorphic to a trivial bundle $\epsilon_M^n$.
\end{enumerate}
\end{lem}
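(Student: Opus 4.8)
The plan is to deduce both parts from Lemma~\ref{lem:sections}, applied once to $\xi^{\vee}$ (for part (i)) and combined with an explicit orthogonal‑projection construction carried out inside a trivial bundle equipped with its standard inner product (for part (ii)).

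For (i), note that $\xi^{\vee}$ is a definable $C^r$ vector bundle by Proposition~\ref{prop:hometc}(ii), so Lemma~\ref{lem:sections} furnishes definable $C^r$ sections $\sigma_1,\dots,\sigma_n$ of $\xi^{\vee}$ with $\sigma_1(x),\dots,\sigma_n(x)$ spanning $\xi^{\vee}_x=(\xi_x)^{\ast}$ for every $x\in M$. Define $\Phi\colon E\to M\times\mathbb R^n$ by $\Phi(v)=\bigl(p(v),(\sigma_1(p(v))(v),\dots,\sigma_n(p(v))(v))\bigr)$. Read in a definable $C^r$ atlas $(U_i,\varphi_i)$ of $\xi$ together with the corresponding dual atlas of $\xi^{\vee}$, the map $\Phi$ is visibly a definable $C^r$ $M$-morphism $\xi\to\epsilon_M^n$; and if $v\in\xi_x$ with $\Phi(v)=(x,0)$ then $\ell(v)=0$ for all $\ell\in(\xi_x)^{\ast}$, whence $v=0$. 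So $\Phi$ is fibrewise injective, which proves (i).

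For (ii), put the standard definable $C^r$ inner product on the fibre of $\epsilon_M^n$ and let $P\colon\epsilon_M^n\to\epsilon_M^n$ be the fibrewise orthogonal projection of $\mathbb R^n$ onto the $d$-dimensional subspace $\Phi(\xi_x)$. Over a trivializing set $U_i$ the morphism $\Phi$ has the form $(x,a)\mapsto(x,A_i(x)a)$ for a definable $C^r$ map $x\mapsto A_i(x)$ into the space of $n\times d$ real matrices, with $\operatorname{rank}A_i(x)=d$ by the fibrewise injectivity just established; hence on $U_i$ one has the explicit definable $C^r$ formula $P(x)=A_i(x)\bigl({}^t\!A_i(x)A_i(x)\bigr)^{-1}{}^t\!A_i(x)$. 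Since orthogonal projection onto a given subspace is unique, these local expressions agree on overlaps and define a single definable $C^r$ $M$-morphism $P$ with $P^2=P$ and $\operatorname{rank}P(x)\equiv d$. Set $\xi'=(\ker P,\,p',\,M)$, where $\ker P=\{(x,v)\in M\times\mathbb R^n\mid P(x)v=0\}$; one checks that $\xi'$ is a definable $C^r$ vector bundle of rank $n-d$: fixing $x_0$, choosing a linear map $\pi\colon\mathbb R^n\to W:=\ker P(x_0)$ with $\pi|_W=\operatorname{id}_W$ and writing $\iota\colon W\hookrightarrow\mathbb R^n$ and $Q(x)=\operatorname{id}-P(x)$, the map $\theta(x)=\pi\circ Q(x)\circ\iota$ satisfies $\theta(x_0)=\operatorname{id}_W$, so $\theta(x)$ is invertible on a definable open neighbourhood $U$ of $x_0$, and $(x,w)\mapsto\bigl(x,Q(x)\iota\,\theta(x)^{-1}w\bigr)$ is then a definable $C^r$ trivialization of $\ker P$ over $U$ (it lands in $\operatorname{im}Q(x)=\ker P(x)$ and composing with $\pi$ recovers $w$, so it is a fibrewise isomorphism by a dimension count), the transition maps between two such trivializations being definable $C^r$ for the same reason. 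Finally, the definable $C^r$ $M$-morphism $\xi\oplus\xi'\to\epsilon_M^n$ given fibrewise by $(v,w)\mapsto\Phi(v)+w$ is fibrewise bijective, since $\mathbb R^n=\operatorname{im}P(x)\oplus\ker P(x)$ with $\Phi$ mapping $\xi_x$ isomorphically onto $\operatorname{im}P(x)$; its fibrewise inverse $u\mapsto\bigl((\Phi|_{\xi_x})^{-1}(P(x)u),\,Q(x)u\bigr)$ is again definable $C^r$, because over $U_i$ one has $(\Phi|_{\xi_x})^{-1}(w)=\varphi_i\bigl(x,({}^t\!A_i(x)A_i(x))^{-1}{}^t\!A_i(x)w\bigr)$. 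Hence $\xi\oplus\xi'$ is definably $C^r$ isomorphic to $\epsilon_M^n$, proving (ii).

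The main obstacle is the passage from local to global: one must check that the orthogonal projections computed in different trivializations really are restrictions of one and the same definable $C^r$ morphism $P$, and that $\ker P$ inherits the structure of a definable $C^r$ sub-bundle of $\epsilon_M^n$. Both reduce to chart-independence of orthogonal projection and to the rank of $P(x)$ being constant — which is exactly what fibrewise injectivity of $\Phi$ provides — after which the remaining work is the routine verification that the classical topological constructions (generating sections, orthogonal complement, the splitting $\epsilon_M^n\cong\operatorname{im}P\oplus\ker P$) are all effected by definable formulas of class $C^r$.
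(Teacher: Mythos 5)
Your argument is correct, but it takes a noticeably different route from the paper's, especially in part (ii). For (i) the paper proceeds in the opposite direction: it uses Lemma \ref{lem:sections} on $\xi$ itself to get a surjective morphism $\epsilon_M^n \rightarrow \xi$, dualizes to embed $\xi^{\vee}$ into $\epsilon_M^n$, and only recovers the embedding of $\xi$ at the very end by applying the implication (i)$\Rightarrow$(ii) to $\xi^{\vee}$ and dualizing once more; your application of Lemma \ref{lem:sections} directly to $\xi^{\vee}$ produces the fibrewise injective morphism $\Phi$ in one step and is, if anything, cleaner. For (ii) the paper constructs the classifying map $f:M \rightarrow \grassman(n,d)$ determined by the embedding and sets $\xi'=f^*(\gamma_{n,d}^{\perp})$, so the complementary bundle and the splitting $\gamma_{n,d}\oplus\gamma_{n,d}^{\perp}\simeq \epsilon^n_{\grassman(n,d)}$ are imported from the universal bundle; you instead build the orthogonal projection $P(x)=A_i(x)\bigl({}^t\!A_i(x)A_i(x)\bigr)^{-1}{}^t\!A_i(x)$ by hand and take $\xi'=\ker P$, which trades the Grassmannian machinery for an explicit verification that $\ker P$ is a definable $C^r$ subbundle and that the splitting map and its inverse are definable $C^r$ — a verification you carry out correctly. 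The one point to tighten: the paper's definition of a definable $C^r$ vector bundle demands a \emph{finite} definable atlas, whereas your trivializations of $\ker P$ are indexed by points $x_0\in M$. This is routine to repair — take the complement $W$ to range over the finitely many coordinate subspaces of $\mathbb R^n$ of dimension $n-d$ (for each $x$ at least one of them projects isomorphically from $\ker P(x)$, a definable open condition given by the nonvanishing of a minor), which yields a finite definable covering by trivializing sets — but it should be said, since finiteness of the atlas is part of the definable category. With that remark added, your proof is complete and self-contained, and it even avoids the paper's reliance on the definable $C^r$ structure of $\gamma_{n,d}^{\perp}$.
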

\begin{proof}
We first show that the assertion (ii) follows from the assertion (i).
We first show that there exists a definable $C^r$ map $f:M \rightarrow \grassman(n,d)$ such that $\xi$ is definably $C^r$ isomorphic to $f^*(\gamma_{n,d})$. 
Here, $\grassman(n,d)$ denotes the Grassmanian of $d$-dimensional subspaces of an $n$-dimensional vector space, and $\gamma_{n,d}$ denotes the universal vector bundle over $\grassman(n,d)$.
Let $\psi: \xi \rightarrow \epsilon_M^n$ be an injective definable $C^r$ morphism.
Define $f: M \rightarrow \grassman(n,k)$ by $\{x\} \times f(x) = \psi(p^{-1}(x))$.
It suffices to prove that $f$ is a definable $C^r$ map.
Let $(U_i,\varphi_i:\epsilon_{U_i}^d \rightarrow \xi|_{U_i})_{i=1}^q$ be a definable $C^r$ atlas of $\xi$.
Define $\eta_{ij}:U_i \rightarrow \mathbb R^d$ by $(x,\eta_{ij}(x))=\psi(\varphi_i(x,e_j))$ for any $1 \leq i \leq q$ and $1 \leq j \leq d$, where $e_j$ is the $j$-th vector of the canonical basis of $\mathbb R^d$.
It is a definable $C^r$ map.
Since $f(x)$ is generated by $\eta_{ij}(x)$ for any $x \in U_i$, the map $f$ is also a definable $C^r$ map.

Let $\gamma_{n,d}^{\perp}$ denote the vector bundle orthogonal to $\gamma_{n,d}$.
We have $f^*(\gamma_{n,d}) \oplus f^*(\gamma_{n,k}^\perp) \simeq \epsilon_M$ because $\gamma_{n,d} \oplus \gamma_{n,d}^\perp \simeq \epsilon_{\grassman(n,d)}^n$.
The vector bundle $\xi'=f^*(\gamma_{n,d}^\perp)$ is a definable $C^r$ vector bundle satisfying the required condition.

We next show the assertion (i).
Let $s_1, \ldots, s_n$ be the sections of $\xi$ given in Lemma \ref{lem:sections}.
The definable $C^r$ morphism $\psi: \epsilon_M^n \rightarrow \xi$ defined by 
\begin{equation*}
\psi(x,a_1, \ldots, a_n)=\left(x,\sum_{i=1}^n a_is_i(x)\right)
\end{equation*}
is surjective.
Taking the dual, we obtain an injective definable $C^r$ morphism $\psi^{\vee}: \xi^{\vee} \hookrightarrow \left(\epsilon_M^n\right)^{\vee} \simeq  \epsilon_M^n$.
We have already shown that the condition (i) implies the condition (ii). 
Hence, there exists a definable $C^r$ vector bundle $\xi'$ over $M$ such that the Whitney sum $\xi^{\vee} \oplus \xi'$ is isomorphic to a trivial bundle $\epsilon_M^n$.
We have $\xi \oplus (\xi')^{\vee} \simeq \left(\xi^{\vee}\right)^{\vee} \oplus (\xi')^{\vee} \simeq \left(\xi^{\vee} \oplus \xi'\right)^{\vee} \simeq (\epsilon_M^n)^{\vee} \simeq \epsilon_M^n$.
In particular, there exists an injective definable $C^r$ morphism from $\xi$ into a trivial bundle $\epsilon_M^n$.
\end{proof}

We begin to prove Main Theorem  \ref{thm:main1}.
We first review the notations introduced in Section \ref{sec:intro}.
\begin{notation}
Let $M$ be a definable $C^r$ manifold, where $r$ is a nonnegative integer. 
The notation $\cdfr(M)$ denotes the ring of definable $C^r$ functions on $M$.
The objects of the category $\vdfr(M)$ are definable $C^r$ vector bundles over $M$ and its arrows are definable $C^r$ $M$-morphisms between them.
Given a commutative ring $R$, the category $\proj(R)$ is the category whose objects are finitely generated projective $R$-modules and whose arrows are homomorphisms between projective modules.

For any definable $C^r$ vector bundle $\xi$ over a definable $C^r$ manifold $M$, the notation $\funcvec(\xi)$ denotes the set of all definable $C^r$ sections of $\xi$. 
The notation $\funcvec(\varphi)$ is the induced homomorphism from $\funcvec(\xi)$ to $\funcvec(\xi')$ for any definable $C^r$ $M$-morphism $\varphi:\xi \rightarrow \xi'$ between definable $C^r$ vector bundles over $M$.  
\end{notation}

We next show that $\funcvec$ is a covariant functor from $\vdfr(M)$ to $\proj(\cdfr(M))$.

\begin{prop}\label{prop:func1}
Let $M$ be a definable $C^r$ manifold, where $r$ is a nonnegative integer. 
The map $\funcvec$ is a covariant functor from the category $\vdfr(M)$ to the category $\proj(\cdfr(M))$.
\end{prop}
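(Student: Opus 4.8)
The plan is to verify the three things required to call $\funcvec$ a covariant functor: that $\funcvec(\xi)$ is an object of $\proj(\cdfr(M))$ for every definable $C^r$ vector bundle $\xi$ over $M$, that $\funcvec(\varphi)$ is a well-defined $\cdfr(M)$-module homomorphism for every definable $C^r$ $M$-morphism $\varphi$, and that $\funcvec$ respects identities and composition. The last two are essentially formal, so the bulk of the proof is the first point; I would treat the functoriality bookkeeping in a single short paragraph at the end.

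First I would show $\funcvec(\xi)$ is a $\cdfr(M)$-module. The set of definable $C^r$ sections is closed under addition and under multiplication by elements of $\cdfr(M)$: this is checked locally in a definable $C^r$ atlas $(U_i,\varphi_i)$, where a section $s$ corresponds to the definable $C^r$ map $\varphi_i^{-1}\circ s|_{U_i}:U_i\to U_i\times\mathbb R^d$, and sums and scalar multiples of definable $C^r$ maps are again definable $C^r$. The zero section is clearly definable $C^r$, so $\funcvec(\xi)$ is a $\cdfr(M)$-module.

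Next I would show this module is finitely generated and projective. By Lemma~\ref{lem:equiv}(ii), there is a definable $C^r$ vector bundle $\xi'$ over $M$ with $\xi\oplus\xi'$ definably $C^r$ isomorphic to a trivial bundle $\epsilon_M^n$. Applying $\funcvec$ and using the evident natural isomorphism $\funcvec(\xi\oplus\xi')\cong\funcvec(\xi)\oplus\funcvec(\xi')$ (a section of the Whitney sum is a pair of sections of the summands, and this correspondence is definable and $C^r$-compatible), together with $\funcvec(\epsilon_M^n)\cong\cdfr(M)^n$ (a section of the trivial bundle is just an $n$-tuple of definable $C^r$ functions), we get $\funcvec(\xi)\oplus\funcvec(\xi')\cong\cdfr(M)^n$. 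Hence $\funcvec(\xi)$ is a direct summand of a free module of finite rank, so it is finitely generated and projective; that is, $\funcvec(\xi)\in\proj(\cdfr(M))$. Alternatively one could invoke Lemmas~\ref{lem:sections} and~\ref{lem:sections2}: the former produces generators $s_1,\dots,s_m$ of $\funcvec(\xi)$ as a $\cdfr(M)$-module, and the latter shows every section is a $\cdfr(M)$-combination of them, giving finite generation directly, with projectivity then coming from the splitting of $\cdfr(M)^m\twoheadrightarrow\funcvec(\xi)$.

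Finally, for a definable $C^r$ $M$-morphism $\varphi:\xi\to\xi'$, the map $\funcvec(\varphi)$ sends a definable $C^r$ section $s$ of $\xi$ to $\varphi\circ s$; this is a section of $\xi'$ because $\varphi$ covers the identity on $M$, it is definable $C^r$ because in local trivializations it is a composition of definable $C^r$ maps, and it is additive and $\cdfr(M)$-linear because $\varphi$ is fiberwise linear, so $\funcvec(\varphi)$ is a morphism in $\proj(\cdfr(M))$. Functoriality is then immediate: $\funcvec(\operatorname{id}_\xi)=\operatorname{id}_{\funcvec(\xi)}$ since $\operatorname{id}_\xi\circ s=s$, and $\funcvec(\psi\circ\varphi)=\funcvec(\psi)\circ\funcvec(\varphi)$ by associativity of composition. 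The only genuine content is the finite-generation-and-projectivity step, which rests entirely on Lemma~\ref{lem:equiv}; everything else is routine verification, so I do not anticipate a real obstacle.
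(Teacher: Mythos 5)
Your proposal is correct and follows essentially the paper's own argument: the whole content is that $\funcvec(\xi)$ is finitely generated projective, obtained from Lemma~\ref{lem:equiv} via $\funcvec(\xi)\oplus\funcvec(\xi')\simeq\funcvec(\epsilon_M^n)=(\cdfr(M))^n$, with the module-structure and functoriality checks treated as routine. The only cosmetic difference is that the paper proves finite generation separately through Lemmas~\ref{lem:sections} and~\ref{lem:sections2} and uses Lemma~\ref{lem:equiv} only for projectivity, whereas you note (correctly) that being a direct summand of $(\cdfr(M))^n$ already gives both at once, and you cite the sections lemmas only as an alternative.
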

\begin{proof}
The only non-trivial assertion is that the set of all definable $C^r$ sections $\funcvec(\xi)$ is a finitely generated projective $\cdfr(M)$-module for any definable $C^r$ vector bundle $\xi$.

We first show that $\funcvec(\xi)$ is a finitely generated $\cdfr(M)$-module.
There are definable $C^r$ sections $s_1, \ldots, s_n$ of $\xi$ which generate the fibers $\xi_x$ for all $x \in M$ by Lemma \ref{lem:sections}.
The $\cdfr(M)$-module $\funcvec(\xi)$ is generated by $s_1, \ldots, s_n$ by Lemma \ref{lem:sections2}.

We next show that $\funcvec(\xi)$ is a projective module.
We have a  definable $C^r$ vector bundle $\xi'$ such that $\xi \oplus \xi'$ is definably $C^r$ isomorphic to a trivial bundle $\epsilon_M^n$ by Lemma \ref{lem:equiv}.
We have $$\funcvec(\xi) \oplus \funcvec(\xi') \simeq \funcvec(\epsilon_M^n)=\left(\cdfr(M)\right)^n\text{.}$$
Since $\funcvec(\xi)$ is a direct summand of a free module of finite rank, it is a finitely generated projective module.
\end{proof}

We finally show Main Theorem  \ref{thm:main1}.
\begin{thm}\label{thm:main1_0}
Let $M$ be a definable $C^r$ manifold, where $r$ is a nonnegative integer. 
Then, the functor $\funcvec$ is an equivalence of the category $\vdfr(M)$ with the category $\proj(\cdfr(M))$.
\end{thm}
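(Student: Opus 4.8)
The plan is to show that $\funcvec$ is essentially surjective and fully faithful, which together give the desired equivalence of categories. For essential surjectivity, let $P$ be a finitely generated projective $\cdfr(M)$-module. Then there is another module $Q$ with $P \oplus Q \simeq \cdfr(M)^n$, so $P$ is the image of an idempotent $\cdfr(M)$-linear endomorphism $e$ of $\cdfr(M)^n$. Since $\cdfr(M)^n = \funcvec(\epsilon_M^n)$ and every $\cdfr(M)$-linear map of sections is, as I will verify below in the faithfulness step, induced by a definable $C^r$ $M$-morphism, the idempotent $e$ comes from a definable $C^r$ $M$-endomorphism $\varphi:\epsilon_M^n \to \epsilon_M^n$ with $\varphi \circ \varphi = \varphi$. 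Its image $\xi := \operatorname{im}\varphi$ is a subbundle of $\epsilon_M^n$: fiberwise $\varphi_x$ is an idempotent linear map, its rank is locally constant (hence constant on each connected component; one can split into finitely many definable clopen pieces of constant rank, or simply absorb this into the construction), and the local trivializations are obtained from the standard ones by choosing definable $C^r$ frames — concretely $\xi$ is definably $C^r$ isomorphic to $\varphi^*$ applied to the universal bundle via the classifying map $x \mapsto \operatorname{im}\varphi_x \in \grassman(n,d)$ as in the proof of Lemma \ref{lem:equiv}. Then $\funcvec(\xi) = \operatorname{im}\funcvec(\varphi) = \operatorname{im} e \simeq P$, giving essential surjectivity.

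For full faithfulness I would argue as follows. Let $\xi=(E,p,M)$ and $\xi'=(E',p',M)$ be definable $C^r$ vector bundles over $M$. I must show the natural map $\operatorname{Hom}_{\vdfr(M)}(\xi,\xi') \to \operatorname{Hom}_{\cdfr(M)}(\funcvec(\xi),\funcvec(\xi'))$ is a bijection. Injectivity is easy: a definable $C^r$ $M$-morphism is determined fiberwise, and by Lemma \ref{lem:sections} there are definable $C^r$ sections $s_1,\dots,s_n$ whose values span every fiber $\xi_x$; if $\funcvec(\varphi) = \funcvec(\psi)$ then $\varphi(s_i(x)) = \psi(s_i(x))$ for all $i$ and $x$, and since the $s_i(x)$ span $\xi_x$ the linear maps $\varphi_x$ and $\psi_x$ agree, so $\varphi = \psi$. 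For surjectivity, let $h:\funcvec(\xi) \to \funcvec(\xi')$ be $\cdfr(M)$-linear. Using Lemma \ref{lem:equiv}(ii), pick $\eta$ with $\xi \oplus \eta \simeq \epsilon_M^N$; it suffices to realize $\cdfr(M)$-linear maps out of and into trivial bundles, because $h$ extends (by zero on $\eta$) to a map $\funcvec(\epsilon_M^N) \to \funcvec(\xi')$ and composing with the projection recovers $h$. For a trivial source $\epsilon_M^N$ with standard sections $e_1,\dots,e_N$, the images $t_k := h(e_k) \in \funcvec(\xi')$ are definable $C^r$ sections of $\xi'$, and the fiberwise formula $(x,a_1,\dots,a_N) \mapsto \sum_k a_k t_k(x)$ defines a morphism $\epsilon_M^N \to \xi'$; checking it is definable and of class $C^r$ is local and reduces to the fact that the $t_k$ are definable $C^r$ sections. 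Finally $\funcvec$ of this morphism sends $e_k \mapsto t_k = h(e_k)$, and since the $e_k$ generate $\funcvec(\epsilon_M^N)$ as a $\cdfr(M)$-module, it equals the given $h$.

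I would then assemble these: essential surjectivity plus full faithfulness is precisely the statement that $\funcvec$ is an equivalence of categories. The one technical point worth spelling out carefully — and the main obstacle — is the construction in the essential-surjectivity step of a genuine definable $C^r$ \emph{subbundle} structure on the image of an idempotent morphism $\varphi:\epsilon_M^n \to \epsilon_M^n$, i.e.\ producing a definable $C^r$ atlas for $\operatorname{im}\varphi$ and checking that $\funcvec(\operatorname{im}\varphi) = \operatorname{im}\funcvec(\varphi)$ as $\cdfr(M)$-modules (not merely as abstract modules). This is handled by the classifying-map argument: the assignment $x \mapsto \operatorname{im}\varphi_x$ is a definable $C^r$ map $M \to \grassman(n,d)$ (after decomposing $M$ into definable clopen pieces on which the rank $d$ is constant, which is legitimate since the rank of $\varphi_x$ is a definable function of $x$ taking finitely many values), exactly as in the proof of Lemma \ref{lem:equiv}, and $\operatorname{im}\varphi$ is then definably $C^r$ isomorphic to the pullback of the universal bundle, hence a definable $C^r$ vector bundle; the identification of sections is immediate because a section of $\operatorname{im}\varphi$ is the same as a section $s$ of $\epsilon_M^n$ with $\varphi \circ s = s$, which is exactly an element of $\operatorname{im}\funcvec(\varphi)$. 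The remaining verifications (that the fiberwise formulas define $C^r$ morphisms, that compositions and identities are respected) are routine and local, so I would only indicate them.
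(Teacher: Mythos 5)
Your proposal is correct, but it proves the theorem by a genuinely different route than the paper. For fullness, the paper constructs the morphism $u$ directly and pointwise, setting $u(v)=\Phi(s)(x)$ for any section $s$ through $v$, checking well-definedness via the maximal ideal $\mathfrak{m}_x$ and then verifying the $C^r$ property with local frames from Lemma \ref{lem:sections}; you instead reduce to the case of a trivial source by embedding $\xi$ as a direct summand of $\epsilon_M^N$ (Lemma \ref{lem:equiv}(ii)) and realizing a map out of $\epsilon_M^N$ by the obvious fiberwise formula $(x,a)\mapsto\sum_k a_k t_k(x)$, which avoids the $\mathfrak{m}_x$ argument altogether. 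For essential surjectivity, the paper takes generators $s_1,\ldots,s_m$ of $P\subset(\cdfr(M))^n$, defines the total space $E$ directly, and builds a definable $C^r$ atlas using that $P_{\mathfrak{m}_x}$ is free over the local ring $\cdfr(M)_{\mathfrak{m}_x}$ (the sets $U_I$ and Cramer-type minors), then identifies $P=\funcvec(\xi)$ via Lemma \ref{lem:sections2}; you instead present $P$ as the image of an idempotent $e$ on $\cdfr(M)^n$, use the already-proved full faithfulness to realize $e$ as an idempotent bundle endomorphism $\varphi$ of $\epsilon_M^n$, and take $\operatorname{im}\varphi$ as a subbundle via the Grassmannian classifying map, using that the rank of an idempotent equals its trace and is therefore locally constant. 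Your route avoids the commutative-algebra input (localization and freeness of projectives over local rings) but makes essential surjectivity depend on full faithfulness and on the classifying-map machinery of Lemma \ref{lem:equiv}, whereas the paper's construction is independent of fullness and produces the bundle with an explicit atlas. Two small points you should spell out if you write this up: the identification $\funcvec(\operatorname{im}\varphi)=\operatorname{im}\funcvec(\varphi)$ needs the brief local-frame (Cramer's rule) check that a definable $C^r$ section of $\epsilon_M^n$ with values in the subbundle has definable $C^r$ coefficients in a local frame of the subbundle — the same verification carried out inside Lemma \ref{lem:sections2} — and your clopen decomposition of $M$ into pieces of constant rank plays the role of the paper's reduction to connected $M$, so the two treatments of non-constant rank are on an equal footing.
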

\begin{proof}
In order to show that $\funcvec$ is an equivalence of the categories, we have only to show that the functor $\funcvec$ is faithful and full, furthermore; for any finitely generated projective $\cdfr(M)$-module $P$, there is a definable $C^r$ vector bundle $\xi=(E,p,M)$ such that $\funcvec(\xi)$ is isomorphic to $P$ by \cite[Chapter IV, Section 4, Theorem 1]{Category}.

We first show that the functor $\funcvec$ is faithful.
Let $\xi=(E,p,M)$ and $\xi'=(E',p',M)$ be two definable $C^r$ vector bundles.
Let $u:\xi \rightarrow \xi'$ and $u':\xi \rightarrow \xi'$ be definable $C^r$ $M$-morphisms with $\funcvec(u)=\funcvec(u')$.
Fix an arbitrary element $v \in E$.
There exists a definable $C^r$ section $s$ with $s(p(v))=v$ by Lemma \ref{lem:sections}.
We have $u(v)=u(s(p(v)))=(\funcvec(u)(s))(p(v))=(\funcvec(u')(s))(p(v))=u'(s(p(v)))=u'(v)$.
We have shown that $u=u'$.

We next show that the functor $\funcvec$ is full.
Let $\xi=(E,p,M)$ and $\xi'=(E',p',M)$ be two definable $C^r$ vector bundles.
Let $\Phi: \funcvec(\xi) \rightarrow \Gamma(\xi')$ be a homomorphism.
We construct a definable $C^r$ $M$-morphism $u:\xi \rightarrow \xi'$ with $\funcvec(u)=\Phi$.
Fix an arbitrary element $v \in E$.
Set $x=p(v)$.
There exists a definable $C^r$ section $s$ with $s(x)=v$ by Lemma \ref{lem:sections}.
We define $u(v)$ as 
\begin{equation*}
u(v)=\Phi(s)(x) \text{.}
\end{equation*}
We first show that it is a well-defined map.
Let $\mathfrak{m}_x$ be the maximal ideal of $\cdfr(M)$ of definable $C^r$ functions on $M$ vanishing at the point $x$.
Let $s'$ be another section with $s'(x)=v$.
We have $s-s'\in \mathfrak{m}_x\funcvec(\xi)$.
We get $\Phi(s)-\Phi(s') \in \mathfrak{m}_x\funcvec(\xi')$ because $\Phi$ is a homomorphism.
It means that $\Phi(s)(x)=\Phi(s')(x)$, and $u$ is well-defined.
We have only to show that $u$ is a definable $C^r$ $M$-morphism because the equality $\funcvec(u)=\Phi$ is obviously satisfied.

Let $(U_i,\varphi_i:U_i \times \mathbb R^d \rightarrow p^{-1}(U_i))_{i=1}^q$ and $(U'_i,\varphi'_j:U'_j \times \mathbb R^{d'} \rightarrow (p')^{-1}(U'_j))_{j=1}^{q'}$ be definable $C^r$ atlases of $\xi$ and $\xi'$, respectively.
Let $s_1 ,\ldots s_m$ be sections of $\xi$ generating the fibers $\xi_x=p^{-1}(x)$ for all $x \in M$ given by Lemma \ref{lem:sections}.
Fix $1 \leq i \leq q$ and $1 \leq j \leq q'$ with $U_i \cap U'_j \not=\emptyset$.
We have only to show that the restriction of $u$ to $U_i \cap U_j'$ is a definable $C^r$ $U_i \cap U_j'$-morphism.
For any subset $J \subset \{1 ,\ldots, m \}$ of cardinality $d$, set 
\begin{equation*}
U_{i,J}=\{ x \in U_i\;|\; \text{the fiber } \xi_x \text{ is generated by } \{s_j \;|\; j \in J\}\}\text{.}
\end{equation*}
Taking a finer finite definable open covering and arranging the order of the sections, we may assume that the fiber $\xi_x$ is generated by $\{s_1(x), \ldots, s_d(x)\}$ without loss of generality.
Let $u':(U_i \cap U_j') \times \mathbb R^d \rightarrow (U_i \cap U_j') \times \mathbb R^{d'}$ be the definable $C^r$ $U_i \cap U_j'$-morpshism defined by $u'(x,v)=(\varphi'_j)^{-1} \circ u \circ \varphi_i(x,v)$.
Let $\pi_i:U_i \times \mathbb R^d \rightarrow \mathbb R^d$ be the natural projection.
Consider the map $\tau:(U_i \cap U_j') \times \mathbb R^d \rightarrow (U_i \cap U_j') \times \mathbb R^d$ given by 
\begin{equation*}
\tau(x,(a_1, \ldots,a_d)) = \left(x, \sum_{k=1}^d a_k \pi_i(\varphi_i^{-1}(s_k(x)))\right)
\end{equation*}
for any $x \in U_i \cap U_j'$ and $(a_1, \ldots,a_d) \in \mathbb R^d$.
It is obviously a definable $C^r$ $U_i \cap U_j'$-isomorphism.
Consider the following commutative diagram:
\begin{equation*}
\begin{diagram}
\node{(U_i \cap U_j') \times \mathbb R^d} \arrow{se,t}{u'\circ\tau} \arrow{s,r}{\tau}\\
\node{(U_i \cap U_j') \times \mathbb R^d} \arrow{e,t} {u'} \arrow{s,r} {\varphi_i|_{(U_i \cap U_j') \times \mathbb R^d}} \node{(U_i \cap U_j') \times \mathbb R^{d'}} \arrow{s,r}{\varphi'_j|_{(U_i \cap U_j') \times \mathbb R^{d'}}}\\
\node{p^{-1}(U_i \cap U_J')} \arrow{r,t}{u|_{(U_i \cap U_j') \times \mathbb R^d}} \node{(p')^{-1}(U_i \cap U_j')}
\end{diagram}
\end{equation*}
Here, the map $u'\circ \tau$ is a map given by 
\begin{equation*}
u'\circ \tau(x,(a_1, \ldots,a_d)) = \left(x, \sum_{k=1}^d a_k \pi_i((\varphi_j')^{-1}(\Phi(s_k)(x)))\right)\text{.}
\end{equation*}
It is obviously a definable $C^r$ map.
The restriction of $u$ to $U_i \cap U_j'$ is also a definable $C^r$ $U_i \cap U_j'$-morphism.
Hence, the map $u$ is a definable $C^r$ $M$-morphism.

We finally demonstrate that, for any finitely generated projective $\cdfr(M)$-module $P$, there exists a definable $C^r$ vector bundle $\xi=(E,p,M)$ such that $\funcvec(\xi)$ is isomorphic to $P$.
We may assume that $M$ is connected without loss of generality.
Since $P$ is a direct summand of a free module by \cite[Proposition A3.1]{Eisenbud}, we may assume that $P \subset (\cdfr(M))^n$.
Let $s_1, \ldots, s_m \in (\cdfr(M))^n$ be generators of $P$.
Set 
\begin{equation*}
E=\{(x,v) \in M \times \mathbb R^n\;|\; v = \sigma(x) \text{ for some } \sigma \in P\}\text{,}
\end{equation*}
and $p:E \rightarrow M$ be the natural projection.

We demonstrate that $\xi=(E,p.M)$ is a definable $C^r$ vector bundle.
Let $\mathfrak{m}_x$ denote the set of definable $C^r$ functions on $M$ vanishing at $x$ for any $x \in M$.
The $\cdfr(M)_{\mathfrak{m}_x}$-module $P_{\mathfrak{m}_x}$ is free by \cite[Exercise 4.11]{Eisenbud}.
For all subsets $I \subset \{1, \ldots, m\}$, we define the subsets $U_I$ of $M$ by 
\begin{equation*}
\{x \in M\;|\;  \{s_i\}_{i \in I} \text{ is a basis of } P_{\mathfrak{m}_x}\}\text{.}
\end{equation*}
The set $U_I$ is open.
In fact, if $x \in U_I$, there exist $a_j \in \cdfr(M) \setminus \mathfrak{m}_x$ and $b_{j,k} \in \cdfr(M)$ for any $j \not\in I$ and $k \in I$ with $a_j(x)s_j(x) = \sum_{k \in I}b_{j,k}(x)s_k(x)$.
Set $V=\{y \in M\;|\; a_j(y)\not=0 \text{ for any } j \not\in I\}$, then $V$ is an open neighbourhood of $x$.
The set $\{s_i\}_{i \in I}$ is a basis of the localization $P_{\mathfrak{m}_y}$ for any $y \in V$. 
Hence, $V$ is contained in $U_I$.
It means that $U_I$ is open.
Since $M$ is connected, the rank of $P_{\mathfrak{m}_x}$ is constant, say $d$.
Let $A_I$ be the $n \times d$-matrix whose column vectors are $\{s_i\}_{i \in I}$.
It is obvious that $x \in U_I$ if and only if at least one of $d \times d$-minor determinants of $A_I$ is not zero at $x$.
Hence, $U_I$ is a definable set.
Let $\mathcal P$ be the collection of all subsets of $\{1, \ldots, m\}$ with $U_I \not= \emptyset$.
It is obvious that the family of definable open sets $\{U_I\}_{I \in \mathcal P}$ covers $M$.
We have shown that $\{U_I\}_{I \in \mathcal P}$ is a finite definable open covering of $M$.

Fix an arbitrary subset  $I \in \mathcal P$.
Let $j_I(k)$ be the $k$-th smallest element of the subset $I \subset \{1, \ldots, m\}$.
Define $\varphi_I:U_I \times \mathbb R^d \rightarrow p^{-1}(U_I)$ by 
\begin{equation*}
\varphi_I(x,(a_1, \ldots, a_d))= \sum_{k=1}^d a_ks_{j_I(k)}(x)\text{.}
\end{equation*}
It is obviously definable and of class $C^r$, and for any $x \in U_I$, the restriction $\varphi_I|_{\{x \} \times \mathbb R^d}$ of $\varphi_I$ to $\{x \} \times \mathbb R^d$ is an isomorphism between vector spaces.
Let $J \subset \{1, \ldots, m\}$ be another subset with $U_J \not= \emptyset$ and $U_I \cap U_J \not= \emptyset$.
We want to show that $\left(\varphi_J|_{(U_I \cap U_J) \times \mathbb R^d} \right)^{-1} \circ \varphi_I|_{(U_I \cap U_J) \times \mathbb R^d}$ is a definable $C^r$ diffeomorphism.
It is definable because $\varphi_I$ and $\varphi_J$ are definable.
It is also obvious that it is a bijective. 
We have only to show that it is a $C^r$ diffeomorphism.
Let $x \in U_I \cap U_J$ be an arbitrary point.
Since both $\{s_k\;|\;k \in I\}$ and $\{s_k\;|\;k \in J\}$ are bases of $P_{\mathfrak{m}_x}$, there exists $g \in \gl\left(d,\cdfr(M)_{\mathfrak{m}_x}\right)$ with $\left(\varphi_J|_{(U_I \cap U_J) \times \mathbb R^d} \right)^{-1} \circ \varphi_I|_{(U_I \cap U_J) \times \mathbb R^d}(y,v)=(y,g(y)v)$ for any $y \in M$ sufficiently close to $x$.
It shows that $\left(\varphi_J|_{(U_I \cap U_J) \times \mathbb R^d} \right)^{-1} \circ \varphi_I|_{(U_I \cap U_J) \times \mathbb R^d}$ is a $C^r$ diffeomorphism.
We have shown that $(U_I,\varphi_I)_{I \in \mathcal P}$ is a definable $C^r$-atlas of $\xi$.

It remains to show that $P=\funcvec(\xi)$.
The inclusion $P \subset \funcvec(\xi)$ is obvious because $\funcvec(\xi)$ contains the generators $s_1, \ldots, s_m$ of $P$.
We demonstrate the opposite inclusion.
Let $s \in \funcvec(\xi)$ be fixed.
The fiber $p^{-1}(x)$ is generated by $s_1, \ldots, s_m$ by the definition of the total space $E$.
Hence, there exist definable $C^r$ function $c_1, \ldots, c_m$ on $M$ with $s = \sum_{i=1}^m c_is_i$ by Lemma \ref{lem:sections2}.
We have shown $s \in P$ because $s_1, \ldots, s_m$ are generators of $P$.
\end{proof}

\begin{prop}\label{funcvecbun_plus}
Let $M$ be a definable $C^r$ manifold, where $r$ is a nonnegative integer. 
Then, we have $\funcvec(\xi_1 \oplus \xi_2)= \funcvec(\xi_1) \oplus \funcvec(\xi_2)$ and $\funcvec(\xi_1 \otimes \xi_2)= \funcvec(\xi_1) \otimes \funcvec(\xi_2)$ for any definable $C^r$ vector bundles $\xi_1$ and $\xi_2$ over $M$.
\end{prop}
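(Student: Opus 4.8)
The plan is to establish the two identities separately. The Whitney sum identity is essentially formal, whereas the tensor product identity I would reduce to the case of trivial bundles by means of Lemma~\ref{lem:equiv}.

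For the Whitney sum, fix local trivializations of $\xi_1$ and of $\xi_2$ over a common finite definable open covering $\{U_i\}_{i=1}^q$ of $M$; these induce local trivializations of $\xi_1 \oplus \xi_2$ in which the two projections $\mathrm{pr}_k : \xi_1 \oplus \xi_2 \rightarrow \xi_k$ and the two inclusions $\iota_k : \xi_k \rightarrow \xi_1 \oplus \xi_2$ ($k = 1,2$) are visibly definable $C^r$ $M$-morphisms. Hence $s \mapsto (\mathrm{pr}_1 \circ s, \mathrm{pr}_2 \circ s)$ is a $\cdfr(M)$-module homomorphism from $\funcvec(\xi_1 \oplus \xi_2)$ to $\funcvec(\xi_1) \oplus \funcvec(\xi_2)$ with two-sided inverse $(s_1, s_2) \mapsto \iota_1 \circ s_1 + \iota_2 \circ s_2$, which gives the first identity.

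For the tensor product (which is again a definable $C^r$ vector bundle by Proposition~\ref{prop:hometc}), I would first produce a natural comparison map. The assignment $(s_1, s_2) \mapsto (x \mapsto s_1(x) \otimes s_2(x))$ takes values in $\funcvec(\xi_1 \otimes \xi_2)$: over a set $U_i$ on which $\varphi_i$ trivializes $\xi_1$ and $\psi_i$ trivializes $\xi_2$, the product $\varphi_i \otimes \psi_i$ trivializes $\xi_1 \otimes \xi_2$, and in these coordinates the entries of $s_1(x) \otimes s_2(x)$ are products of entries of $s_1$ and of $s_2$, hence definable $C^r$ functions. This assignment is $\cdfr(M)$-bilinear, so the universal property of the tensor product of modules yields a homomorphism
\[
\Theta_{\xi_1, \xi_2} : \funcvec(\xi_1) \otimes_{\cdfr(M)} \funcvec(\xi_2) \longrightarrow \funcvec(\xi_1 \otimes \xi_2),
\]
and a fiberwise computation shows that $\Theta$ is natural in both variables, that is, $\funcvec(u_1 \otimes u_2) \circ \Theta_{\xi_1, \xi_2} = \Theta_{\eta_1, \eta_2} \circ (\funcvec(u_1) \otimes \funcvec(u_2))$ for all definable $C^r$ $M$-morphisms $u_k : \xi_k \rightarrow \eta_k$. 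When $\xi_1 = \epsilon_M^{a}$ and $\xi_2 = \epsilon_M^{b}$ are trivial, one has $\xi_1 \otimes \xi_2 \simeq \epsilon_M^{ab}$ and $\funcvec(\epsilon_M^{c}) = (\cdfr(M))^{c}$, and under these identifications $\Theta_{\epsilon_M^a, \epsilon_M^b}$ becomes the standard isomorphism $(\cdfr(M))^{a} \otimes_{\cdfr(M)} (\cdfr(M))^{b} \simeq (\cdfr(M))^{ab}$; so $\Theta$ is an isomorphism for trivial bundles.

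To reach the general case, apply Lemma~\ref{lem:equiv} to choose definable $C^r$ vector bundles $\xi_1'$ and $\xi_2'$ with $\xi_1 \oplus \xi_1' \simeq \epsilon_M^{n_1}$ and $\xi_2 \oplus \xi_2' \simeq \epsilon_M^{n_2}$. Because the tensor product distributes over the Whitney sum up to definable $C^r$ isomorphism --- the distributing map is fiberwise the standard one and definable $C^r$ in any pair of local trivializations --- we obtain
\[
\epsilon_M^{n_1} \otimes \epsilon_M^{n_2} \simeq (\xi_1 \otimes \xi_2) \oplus (\xi_1 \otimes \xi_2') \oplus (\xi_1' \otimes \xi_2) \oplus (\xi_1' \otimes \xi_2').
\]
Applying $\funcvec$ together with the already proven Whitney sum identity decomposes $\funcvec(\epsilon_M^{n_1} \otimes \epsilon_M^{n_2})$ as the direct sum of $\funcvec(\xi_1 \otimes \xi_2)$, $\funcvec(\xi_1 \otimes \xi_2')$, $\funcvec(\xi_1' \otimes \xi_2)$ and $\funcvec(\xi_1' \otimes \xi_2')$, while distributivity of $\otimes_{\cdfr(M)}$ over $\oplus$ decomposes $(\cdfr(M))^{n_1} \otimes_{\cdfr(M)} (\cdfr(M))^{n_2}$ as the direct sum of $\funcvec(\xi_1) \otimes_{\cdfr(M)} \funcvec(\xi_2)$, $\funcvec(\xi_1) \otimes_{\cdfr(M)} \funcvec(\xi_2')$, $\funcvec(\xi_1') \otimes_{\cdfr(M)} \funcvec(\xi_2)$ and $\funcvec(\xi_1') \otimes_{\cdfr(M)} \funcvec(\xi_2')$. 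Naturality of $\Theta$ then identifies the isomorphism $\Theta_{\epsilon_M^{n_1}, \epsilon_M^{n_2}}$ with the direct sum $\Theta_{\xi_1, \xi_2} \oplus \Theta_{\xi_1, \xi_2'} \oplus \Theta_{\xi_1', \xi_2} \oplus \Theta_{\xi_1', \xi_2'}$. Since a direct sum of $\cdfr(M)$-module homomorphisms is an isomorphism if and only if each of its summands is, $\Theta_{\xi_1, \xi_2}$ is an isomorphism, which is the second identity. The main obstacle is the bookkeeping in this last step: one must choose the splittings $\xi_k \oplus \xi_k' \simeq \epsilon_M^{n_k}$ and the distributivity isomorphisms coherently so that, after applying $\funcvec$, the trivial-bundle map $\Theta_{\epsilon_M^{n_1}, \epsilon_M^{n_2}}$ genuinely decomposes as that direct sum, and this is exactly where the naturality of $\Theta$ in both variables is invoked.
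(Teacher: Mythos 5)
The paper itself gives no argument here (its proof reads ``We omit the proof.''), so there is nothing to compare against; your proposal is a correct and complete proof of the omitted statement. The Whitney sum identity is handled exactly as one would expect, and for the tensor product your route --- the natural map $\Theta_{\xi_1,\xi_2}:\funcvec(\xi_1)\otimes_{\cdfr(M)}\funcvec(\xi_2)\rightarrow\funcvec(\xi_1\otimes\xi_2)$, checked to be an isomorphism for trivial bundles and then in general via Lemma~\ref{lem:equiv}, distributivity of $\otimes$ over $\oplus$, and naturality of $\Theta$ with respect to the inclusions and projections of the Whitney sums (which gives the block-diagonal decomposition of $\Theta_{\epsilon_M^{n_1},\epsilon_M^{n_2}}$, so that each block, in particular $\Theta_{\xi_1,\xi_2}$, is forced to be an isomorphism) --- is the standard stable-trivialization argument and all its steps check out.
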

\begin{proof}
We omit the proof.
\end{proof}

\section{Approximation of sections of definable $C^r$ vector bundles}\label{sec:appro}
This short section is devoted for demonstrating that any definable continuous section of a definable $C^r$ vector bundle can be approximated by a definable $C^r$ section.
We introduce two lemmas prior to the proof of the above assertion.
\begin{lem}\label{lem:appro_sub}
Let $C$ be a definable closed subset of $\mathbb R^n$.
Let $U \subset C \times \mathbb R^q$ be a definable open neighbourhood of $C \times \{0\}$ contained in $C \times B_q(0;1)$.
Then, there exists a positive definable continuous function $\rho:C \rightarrow \mathbb R$ with  $\{(x,v) \in C \times \mathbb R^q\;|\; \|v\|_q < \rho(x) \} \subset U$.
Here, $B_q(0;r)$ denotes the open ball in $\mathbb R^q$ centred at the origin of radius $r$, and $\|v\|_q$ denotes the Euclidean norm of a vector $v$ in $\mathbb R^q$.
\end{lem}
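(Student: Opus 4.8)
The plan is to realize the desired radius function as the Euclidean distance from the zero section to the complement of $U$. If $C=\emptyset$ the statement is vacuous (the empty function works), so assume $C\neq\emptyset$. First I would set $G=(C\times\mathbb R^q)\setminus U$. Since $C$ is closed in $\mathbb R^n$, the product $C\times\mathbb R^q$ is closed in $\mathbb R^{n+q}$, and since $U$ is open in the subspace $C\times\mathbb R^q$, the set $G$ is closed in $\mathbb R^{n+q}$; it is definable because $U$ and $C\times\mathbb R^q$ are. Moreover $G\neq\emptyset$: the hypothesis $U\subset C\times B_q(0;1)$ forces $(x,v)\in G$ whenever $x\in C$ and $\|v\|_q\geq 1$.

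Next I would define $\rho:C\rightarrow\mathbb R$ by letting $\rho(x)$ be the Euclidean distance in $\mathbb R^{n+q}$ from the point $(x,0)$ to the nonempty closed set $G$. This $\rho$ is definable (it is given by a formula in the language of the o-minimal structure), it is continuous — indeed $1$-Lipschitz — and it takes finite values because $G\neq\emptyset$. It is also positive: for each $x\in C$ we have $(x,0)\in U$, hence $(x,0)\notin G$, and since $G$ is closed the distance from $(x,0)$ to $G$ is strictly positive.

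Finally I would verify the inclusion $\{(x,v)\in C\times\mathbb R^q\;|\;\|v\|_q<\rho(x)\}\subset U$. Suppose $x\in C$, $v\in\mathbb R^q$, $\|v\|_q<\rho(x)$, but $(x,v)\notin U$. Then $(x,v)\in G$, so $\rho(x)$, being the distance from $(x,0)$ to $G$, is at most $\|(x,0)-(x,v)\|=\|v\|_q$, contradicting $\|v\|_q<\rho(x)$. Hence $(x,v)\in U$, which is exactly the required containment.

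There is essentially no obstacle here: the only points that need a word of care are that $G$ is genuinely closed in $\mathbb R^{n+q}$ (this is where the hypothesis that $C$ is closed is used) and nonempty (this is where $U\subset C\times B_q(0;1)$ is used), so that the distance function to $G$ is well defined, finite, definable, continuous, and positive on $C\times\{0\}$; the rest follows immediately from the $1$-Lipschitz property of the distance to a fixed set.
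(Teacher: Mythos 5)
Your proof is correct, and it is considerably more direct than the one in the paper. You take $\rho(x)$ to be the Euclidean distance from $(x,0)$ to the closed set $G=(C\times\mathbb R^q)\setminus U$; all the required properties follow at once (definability, continuity via the $1$-Lipschitz estimate, positivity because $(x,0)\notin G$ with $G$ closed, and the inclusion because any $(x,v)\in G$ witnesses $\rho(x)\le\|v\|_q$), and you correctly isolate the two places where the hypotheses enter: $G$ is closed in $\mathbb R^{n+q}$ because $C$ is closed, and $G$ is nonempty because $U\subset C\times B_q(0;1)$. The paper proceeds quite differently: it forms the radial profile $\phi(s)=\sup\{t\;|\;C_s\times B_q(0;t)\subset U\}$ over the compact slices $C_s=\{x\in C\;|\;\|x\|_n=s\}$, applies the o-minimal monotonicity theorem to make $\phi$ continuous off finitely many points, rules out $\phi$ degenerating to $0$ near those points by a compactness and contradiction argument, and finally glues a minorant of $\phi$ together with a definable partition of unity. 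Your distance-function argument buys a shorter, essentially coordinate-free proof that needs no cell decomposition, no monotonicity theorem, and no partition of unity; the paper's construction produces a $\rho$ depending only on $\|x\|_n$, but that extra feature is not used elsewhere, so nothing is lost by your route.
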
 
\begin{proof}
Since $U$ is an open neighbourhood of $C \times \{0\}$, for any $x \in C$, there exist positive real numbers $\delta_x$ and $s_x$ such that the set $\{y \in C\;|\; \|y-x\| < \delta_x\} \times B_q(0;s_x)$ is contained in $U$.
Set $S = \{ s \in \mathbb R\;|\; \exists x \in C \subset \mathbb R^n, \|x\|_n = s\}$.
For any $s \in S$, the notation $C_s$ denotes the definable closed set $\{x \in C\;|\; \|x\|_n = s\}$.
Since the definable closed set $C_s$ is compact,
there exists a positive real number $t$ such that $C_s \times B_q(0;t)$ is contained in $U$.
The function $\phi:S \rightarrow \mathbb R$ given by 
\begin{equation*}
\phi(s)=\sup \{t \in \mathbb R\;|\; C_s \times B_q(0;t) \subset U\}
\end{equation*}
is a well-defined definable function, and positive for any point in $S$.

There exist finite points $s_1, \cdots, s_p \in S$ such that the restriction of $\phi$ to $S \setminus \{ s_1, \cdots, s_p\}$ is continuous by \cite[Chapter 3, Theorem 1.2]{vdD}.
We may assume that $S \setminus \{ s_1, \cdots, s_p\}$ is open by enlarging the discrete set $\{ s_1, \cdots, s_p\}$ if necessary.
Let $T$ be a connected component of $S \setminus \{ s_1, \cdots, s_p\}$ such that $s_i$ is contained in the closure of $T$.
The notation $\operatorname{graph}(\phi|_T)$ denotes the graph of the restriction of $\phi$ to $T$.
We show that $(s_i ,0)$ is not contained in the closure $\overline{\operatorname{graph}(\phi|_T)}$.
Assume the contrary.
There exists a convergent sequence $\{t_m \} \subset T$ with $t_m \to s_i$ and $\phi(t_m) \to 0$ as $m \to \infty$.
It means that, for any positive real number $\varepsilon > 0$, there exist a positive integer $m_\varepsilon$ and $x_\varepsilon \in C_{t_{m_\varepsilon}}$ with $\{x_\varepsilon\} \times B(0;\varepsilon) \not\subset U$.
There exists a real number $R$ with $t_m \leq R$ for any positive integer $m$ because $\{t_m \}$ is a convergent sequence.
Set $D=\{x \in C\;|\; \|x\|_n \leq R\}$. 
We have $x_{\varepsilon} \in D$ for any $\varepsilon > 0$.
Let $\{\varepsilon_m\}$ be a sequence of positive real numbers converging to $0$.
Since $D$ is compact, we may assume that the sequence $\{x_{\varepsilon_m}\}$ converges to a point $x' \in C$ by taking a subsequence of $\{\varepsilon_m\}$ if necessary.
Since $U$ is an open neighbourhood of $C \times \{0\}$, there exist positive real numbers $\delta_{x'}$ and $s_{x'}$ such that the set $\{y \in C\;|\; \|y-x'\| < \delta_{x'}\} \times B_q(0;s_{x'})$ is contained $U$.
We have $\{x_{\varepsilon_m}\} \times B_q(0;s_{x'}) \subset U$ for sufficiently large $m$.
On the other hand, for sufficiently large $m$, we have $\varepsilon_m < s_{x'}$ and we get $\{x_{\varepsilon_m}\} \times B_q(0;\varepsilon_m) \not\subset U$ by the assumption.
It is a contradiction.
We have shown that $(s_i ,0) \not\in \overline{\operatorname{graph}(\phi|_T)}$.

There exist positive real numbers $y_1, \ldots, y_p$ and $\eta_1, \ldots, \eta_p$ such that  the restriction of $\phi$ to the intersection $S \cap ( s_i -\eta_i,s_i+\eta_i)$ is larger than $y_i$ for all $1 \leq i \leq p$ because  $(s_i ,0) \not\in \overline{\operatorname{graph}(\phi|_T)}$.
Set $V_0 = S \setminus \{ s_1, \cdots, s_p\}$ and $V_i = ( s_i -\eta_i,s_i+\eta_i)$ for $1 \leq i \leq p$.
The collection of definable open sets $\{V_i\}_{i=0}^p$ is a definable open covering of the neighborhood $V = \bigcup_{i=0}^p V_i$ of $S$. 
Let $\{\tau_i: V \rightarrow \mathbb R\}_{i=0}^p$ be a definable continuous partition of unity subordinate to the open covering $\{V_i\}_{i=0}^p$ given by Lemma \ref{lem:unity}.
Define a definable continuous function $g_0 : V_0 \rightarrow \mathbb R$ by $g_0(x)= \phi(x)/2$.
We also define a definable continuous function $g_i : V_i \rightarrow \mathbb R$ by $g_i(x)= y_i/2$ for any $i=1,\ldots,p$.
Then, the function $\rho:C \rightarrow \mathbb R$ defined by $\rho(x)=\sum_{i=0}^q \tau_i(x)g_i(x)$  is a well-defined definable continuous function with $0 < \rho(x) < \phi(x)$ for any $x \in C$.
The definable continuous function $\rho$ satisfies the desired properties.
\end{proof}

The following lemma is \cite[Theorem 1.1]{Escribano}.
\begin{lem}\label{lem:escribano_appro}
Let $M$ and $N$ be definable $C^r$ manifolds. 
Assume that $N$ is a definable $C^r$ submanifold of $\mathbb R^n$.  
Let $f:M \rightarrow N$ be a definable continuous map and $\varepsilon:M \rightarrow \mathbb R$ be a positive definable continuous function on $M$. 
There exists a definable $C^r$ function $g:M \rightarrow \mathbb R$ such that $\| f(x)-g(x)\|_n < \varepsilon(x)$ for any $x \in M$. 
\end{lem}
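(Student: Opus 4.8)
This is \cite[Theorem 1.1]{Escribano}; we only indicate the strategy one would follow to prove it. The case $r=0$ is trivial, since one may take $g=f$, so assume $r\geq 1$.

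The first reduction is in the target. The plan is to fix a definable $C^r$ tubular neighbourhood of $N$ in $\mathbb R^n$ — a definable open set $\Omega\subseteq\mathbb R^n$ with $N\subseteq\Omega$ together with a definable $C^r$ retraction $\pi:\Omega\to N$ — which is itself a nontrivial ingredient but is available in this setting. Since $\pi|_N=\operatorname{id}$ and $\pi$ is locally Lipschitz, an argument of the type of Lemma~\ref{lem:appro_sub} yields a positive definable continuous function $\delta$ on $M$ such that every definable map $\tilde g:M\to\mathbb R^n$ with $\|\tilde g(x)-f(x)\|_n<\delta(x)$ for all $x$ has image in $\Omega$ and satisfies $\|\pi(\tilde g(x))-f(x)\|_n<\varepsilon(x)$. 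Hence it suffices to approximate $f$, regarded as a definable continuous map $M\to\mathbb R^n$, within $\delta$ by a definable $C^r$ map into $\mathbb R^n$, and then post-compose with $\pi$. Working coordinatewise, this reduces the problem to approximating a single definable continuous function $M\to\mathbb R$ to within a prescribed positive definable continuous bound.

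The second reduction is in the source. One would choose finitely many definable $C^r$ charts $(U_i,\phi_i:U_i\to V_i)$ covering $M$, shrink to a covering whose closures are controlled via Lemma~\ref{lem:covering}, and take a definable $C^r$ partition of unity $\{\lambda_i\}$ with $\overline{\{\lambda_i>0\}}\subseteq U_i$ via Lemma~\ref{lem:unity}. If on each $V_i$ one can approximate the definable continuous function $f\circ\phi_i^{-1}$ (extended, by a definable Tietze argument, from the closed definable set $\phi_i(\overline{\{\lambda_i>0\}})$ to all of $V_i$) by a definable $C^r$ function $g_i$ within an error controlled by $\varepsilon$, then $g=\sum_i\lambda_i\cdot(g_i\circ\phi_i)$ extends by $0$ to a definable $C^r$ function on $M$, and $|g-f|\leq\sum_i\lambda_i|g_i\circ\phi_i-f|<\varepsilon$ because $\sum_i\lambda_i=1$. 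Thus everything comes down to the purely local statement: a definable continuous function on an open definable subset $V$ of $\mathbb R^m$ can be approximated, within an arbitrary prescribed positive definable continuous function, by a definable $C^r$ function.

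This local statement is the main obstacle; it is the o-minimal analogue of Efroymson's approximation theorem and is where the substance of \cite{Escribano} lies. The plan for it would be to use a definable $C^r$ cell decomposition of $V$ adapted to the data (\cite[Chapter 7]{vdD}), approximate the function cell by cell — where one has adapted coordinates and, although the function is only continuous, it is pinched between definable $C^r$ bounds — and then glue the pieces by a definable $C^r$ partition of unity, keeping the error under control near the lower-dimensional strata by exploiting that the prescribed bound $\varepsilon$ is a positive definable continuous function and may therefore be taken very small there. Since this argument is carried out in full in \cite{Escribano}, we simply invoke it.
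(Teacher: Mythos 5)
The paper offers no proof of this lemma at all --- it is stated as a direct citation of \cite[Theorem 1.1]{Escribano} --- and your proposal likewise rests on that reference, deferring the essential local approximation step (the o-minimal Efroymson-type theorem) to \cite{Escribano}; the reductions you sketch (tubular neighbourhood in the target, charts and a definable $C^r$ partition of unity in the source) are a faithful outline of how that theorem is actually proved. So the proposal is correct and takes essentially the same approach as the paper.
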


We show an approximation theorem for definable $C^r$ sections using the above lemmas.
\begin{thm}\label{thm:appro_section}
Let $\xi=(E,p,M)$ be a definable $C^r$ vector bundle of rank $d$ over a definable $C^r$ manifold $M$, where $r$ is a nonnegative integer.
Let $\sigma:M \rightarrow E$ be a definable continuous section of $\xi$.
For any definable open neighbourhood $U$ of $\sigma(M)$ in $E$, there exists a definable $C^r$ section $s: M \rightarrow E$ with $s(M) \subset U$.
\end{thm}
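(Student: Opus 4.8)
The plan is to display $\xi$ as a subbundle of a trivial bundle, approximate the ambient coordinates of $\sigma$ by a definable $C^r$ map via Lemma~\ref{lem:escribano_appro}, and then push the approximation back onto the fibres using the fibrewise orthogonal projection; Lemma~\ref{lem:appro_sub} will provide the uniform room needed to keep the result inside $U$. To begin, I would reduce to the case where $M$ is a \emph{closed} definable $C^r$ submanifold of a Euclidean space: taking an affine representative of $M$, its frontier is a definable closed subset of the ambient space, Lemma~\ref{lem:zeroset} supplies a definable $C^r$ function vanishing exactly there, and the graph of the reciprocal of that function realises $M$ as a closed definable $C^r$ submanifold of a larger Euclidean space; this diffeomorphism carries $\xi$, $\sigma$ and $U$ along, so nothing is lost. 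Next, Lemma~\ref{lem:equiv}(i) gives an injective definable $C^r$ morphism $\psi\colon\xi\to\epsilon_M^n$; it is a definable $C^r$ isomorphism onto a definable $C^r$ subbundle of $\epsilon_M^n$, and since $\psi$ is in particular a homeomorphism of total spaces it suffices to prove the statement for that subbundle together with the open neighbourhood $\psi(U)$ of $\psi(\sigma(M))$, and then pull back. Hence I may assume $E\subset M\times\mathbb R^n$, that each fibre $E_x:=p^{-1}(x)$ is a $d$-dimensional linear subspace of $\mathbb R^n$, and that $\sigma(x)=(x,\widehat\sigma(x))$ for a definable continuous $\widehat\sigma\colon M\to\mathbb R^n$ with $\widehat\sigma(x)\in E_x$. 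Let $\pi(x)$ denote the orthogonal projection of $\mathbb R^n$ onto $E_x$; over each chart of a definable $C^r$ atlas of $\xi$ the subspace $E_x$ admits a definable $C^r$ frame, so $x\mapsto\pi(x)$ is a well-defined definable $C^r$ family of $n\times n$ matrices, each $\pi(x)$ restricting to the identity on $E_x$ and having image $E_x$.

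Now consider the definable set
\[
\widetilde W=\{(x,w)\in M\times\mathbb R^n \mid \|w\|_n<1 \text{ and } (x,\widehat\sigma(x)+\pi(x)w)\in U\}.
\]
The map $(x,w)\mapsto(x,\widehat\sigma(x)+\pi(x)w)$ is definable and continuous from $M\times\mathbb R^n$ into $E$ (its value lies in $E_x$ because $\widehat\sigma(x)$ and $\pi(x)w$ do) and carries $M\times\{0\}$ onto $\sigma(M)\subset U$, so $\widetilde W$ is a definable open neighbourhood of $M\times\{0\}$ contained in $M\times B_n(0;1)$. Applying Lemma~\ref{lem:appro_sub} with $C=M$ and $q=n$ yields a positive definable continuous function $\rho\colon M\to\mathbb R$ with $\{(x,w)\in M\times\mathbb R^n\mid\|w\|_n<\rho(x)\}\subset\widetilde W$. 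By Lemma~\ref{lem:escribano_appro}, with target submanifold $\mathbb R^n$ itself, there is a definable $C^r$ map $g\colon M\to\mathbb R^n$ with $\|g(x)-\widehat\sigma(x)\|_n<\rho(x)$ for every $x\in M$. Put $s(x)=(x,\pi(x)g(x))$. Then $\pi(x)g(x)\in E_x$, so $s$ is a section of $\xi$, and it is definable of class $C^r$ since $\pi$ and $g$ are. Finally $\pi(x)\widehat\sigma(x)=\widehat\sigma(x)$, so $\pi(x)g(x)=\widehat\sigma(x)+\pi(x)(g(x)-\widehat\sigma(x))$; as $\|g(x)-\widehat\sigma(x)\|_n<\rho(x)$, the pair $(x,g(x)-\widehat\sigma(x))$ lies in $\widetilde W$, which means exactly that $s(x)=(x,\pi(x)g(x))\in U$. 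Hence $s(M)\subset U$, and undoing the isomorphism $\psi$ and the initial diffeomorphism produces the required section.

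The routine parts are the reduction to a closed base, the fact that an injective definable $C^r$ bundle morphism is an isomorphism onto a definable $C^r$ subbundle and that sections and neighbourhoods transport through it, the definable $C^r$ dependence of $\pi$ on $x$, and the check that $s$ is a genuine definable $C^r$ section of the abstract bundle. The one point requiring real care — and the reason both preparatory lemmas of this section are needed — is manufacturing the positive definable \emph{continuous} margin $\rho$: a naive pointwise margin is only a definable function and need not be continuous, and Lemma~\ref{lem:appro_sub} is precisely the device that upgrades it, which in turn forces us first to present $\xi$ as a subbundle of a trivial bundle over a closed base so that the hypotheses of that lemma hold.
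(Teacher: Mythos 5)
Your proof is correct, and it follows the same overall skeleton as the paper's (reduce to a closed affine base via the $1/H$ graph trick, recast a neighbourhood of $\sigma$ as a definable open neighbourhood of the zero section in $M\times\mathbb R^q$, extract a continuous margin $\rho$ from Lemma \ref{lem:appro_sub}, and approximate with Lemma \ref{lem:escribano_appro}), but the parametrization of the perturbations is genuinely different. The paper takes generating definable $C^r$ sections $s_1,\dots,s_m$ from Lemma \ref{lem:sections}, writes $\sigma=\sum_i\alpha_i s_i$ with definable \emph{continuous} coefficients via (the $C^0$ version of) Lemma \ref{lem:sections2}, uses the map $\tau(x,c)=\sum_i(\alpha_i(x)+c_i)s_i(x)$ to pull $U$ back, and then approximates the coefficient functions $\alpha_i$; this stays entirely inside the abstract bundle and never needs an embedding into a trivial bundle or a projection operator. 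You instead invoke Lemma \ref{lem:equiv}(i) to realize $\xi$ as a definable $C^r$ subbundle of $\epsilon_M^n$, approximate the ambient coordinate map $\widehat\sigma$ itself, and push back with the fibrewise orthogonal projection $\pi(x)$; the price is the (routine but necessary) verifications that the injective morphism is a definable $C^r$ isomorphism onto a subbundle, that $U$ and $\sigma$ transport through it, and that $x\mapsto\pi(x)$ is definable $C^r$ (locally $A(x)({}^t\!A(x)A(x))^{-1}\,{}^t\!A(x)$ for a local frame $A$), while the benefit is that you avoid applying Lemma \ref{lem:sections2} to a merely continuous section and the bookkeeping with the coefficients $\alpha_i$. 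Both arguments hinge on exactly the two preparatory lemmas of the section, and your identity $\pi(x)g(x)=\widehat\sigma(x)+\pi(x)\bigl(g(x)-\widehat\sigma(x)\bigr)$ plays the role that the affine map $\tau$ plays in the paper, so the two proofs are of comparable length and difficulty.
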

\begin{proof}
We may assume that $M$ is a definable $C^r$ submanifold of a Euclidean space $\mathbb R^n$ which is simultaneously closed in $\mathbb R^n$.
In fact, $M$ is a definable $C^r$ submaniold of a Euclidean space $\mathbb R^n$ because $M$ is affine.
Take a definable $C^r$ function $H$ on $\mathbb R^n$ with $H^{-1}(0)=\overline{M} \setminus M$ using \cite[Theorem C.11]{vdDM}.
The image of $M$ under the definable immersion $\iota:M \rightarrow \mathbb R^{n+1}$ given by $\iota(x)=\left(x,1/H(x)\right)$ is a definable closed subset of $\mathbb R^{n+1}$.

Let $s_1, \ldots, s_m$ be definable $C^r$ sections of $\xi$ given in Lemma \ref{lem:sections}.
There exist definable continuous functions $\alpha_1, \ldots, \alpha_m$ on $M$ with $\sigma(x) = \sum_{i=1}^m \alpha_i(x)s_i(x)$ for any $x \in M$ by Lemma \ref{lem:sections2}.
Define $\tau:M \times \mathbb R^m \rightarrow E$ by $\tau(x,(c_1,\ldots,c_m))=\sum_{i=1}^m(\alpha_i(x)+c_i)s_i(x)$.
It is a definable continuous map.
The definable open set $V = \tau^{-1}(U)$ is a definable open neighbourhood of $M \times \{0\}$. 
Taking an intersection of $V$ with $M \times B_m(0;1)$, we may assume that $V$ is contained in $M \times B_m(0;1)$.
There exists a positive definable continuous function $\rho$ on $M$ such that $\{(x,v) \in M \times \mathbb R^m\;|\; \|v\|_m < \rho(x)\} \subset V$ by Lemma \ref{lem:appro_sub}. 
Take a definable $C^r$ approximation $\beta_i$ of $\alpha_i$ with $|\beta_i(x) - \alpha_i(x)| < \rho(x)/\sqrt{m}$ for any $ 1 \leq i \leq m$ using Lemma \ref{lem:escribano_appro}, then the definable $C^r$ section $s: M \rightarrow E$ given by $s(x)=\sum_{i=1}^m \beta_i(x)s_i(x)$ satisfies the requirement.
\end{proof}

\begin{cor}\label{cor:bundle_iso}
Let $r$ be a nonnegative integer.
If two definable $C^r$ vector bundles over a definable $C^r$ manifold are definably $C^0$ isomorphic, they are definably $C^r$ isomorphic. 
\end{cor}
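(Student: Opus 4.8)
The plan is to realise an isomorphism between the two bundles as a section of the $\operatorname{Hom}$-bundle, and to use the approximation Theorem \ref{thm:appro_section} to upgrade a definable continuous such section to a definable $C^r$ one while preserving the fibrewise-isomorphism property. So let $\xi=(E,p,M)$ and $\eta=(E',p',M)$ be definable $C^r$ vector bundles of rank $d$ and let $\varphi:\xi\to\eta$ be a definable $C^0$ $M$-isomorphism. By Proposition \ref{prop:hometc}, $\zeta=\operatorname{Hom}(\xi,\eta)$ is a definable $C^r$ vector bundle over $M$; write $H$ for its total space and $q$ for its projection. A definable $C^0$ $M$-morphism $\xi\to\eta$ is the same thing as a definable continuous section of $\zeta$ (this is routine, as in the topological case, by gluing the local matrix representations), so $\varphi$ corresponds to a definable continuous section $\sigma:M\to H$.

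Let $U\subset H$ be the set of those $\psi\in H$ with $q(\psi)=x$ for which $\psi:\xi_x\to\eta_x$ is an isomorphism of vector spaces. In any local trivialization of $\zeta$ subordinate to a chart $U_i$ of $M$, the set $U$ corresponds to $U_i\times\gl(d,\mathbb R)=U_i\times\{A\in\operatorname{Mat}(d,\mathbb R)\,;\,\det A\neq 0\}$, which is a definable open set; hence $U$ is a definable open subset of $H$. Since $\varphi$ is fibrewise an isomorphism, $\sigma(M)\subset U$. Applying Theorem \ref{thm:appro_section} to the definable continuous section $\sigma$ and to the definable open neighbourhood $U$ of $\sigma(M)$, we obtain a definable $C^r$ section $s:M\to H$ with $s(M)\subset U$. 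Translating back, $s$ is a definable $C^r$ $M$-morphism $u:\xi\to\eta$ that is an isomorphism on every fibre.

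It remains to check that such a $u$ is a definable $C^r$ isomorphism, i.e. that its fibrewise inverse is again definable and of class $C^r$. Fixing definable $C^r$ atlases of $\xi$ and $\eta$ over a common finite definable open covering $\{U_i\}$ of $M$, the morphism $u$ is represented over $U_i$ by a definable $C^r$ map $A_i:U_i\to\gl(d,\mathbb R)$. Matrix inversion $\gl(d,\mathbb R)\to\gl(d,\mathbb R)$, $A\mapsto A^{-1}$, is a rational map with nonvanishing denominator, hence definable and of class $C^\infty$, so $x\mapsto A_i(x)^{-1}$ is a definable $C^r$ map $U_i\to\gl(d,\mathbb R)$. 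The fibrewise inverse of $u$ is a uniquely determined map $E'\to E$, so these local data patch to a definable $C^r$ $M$-morphism $v:\eta\to\xi$ with $v\circ u=\operatorname{id}_\xi$ and $u\circ v=\operatorname{id}_\eta$. Thus $u$ is a definable $C^r$ isomorphism, and $\xi$ and $\eta$ are definably $C^r$ isomorphic.

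I expect the only genuinely nontrivial point to be the verification that $U$ is a definable open subset of the total space $H$ — that is, that ``being a fibrewise isomorphism'' is an open and definable condition compatible with the bundle structure; once this is in place, Theorem \ref{thm:appro_section} together with the smoothness and definability of matrix inversion does the rest.
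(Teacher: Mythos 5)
Your proof is correct and follows essentially the same route as the paper: realise the $C^0$ isomorphism as a definable continuous section of the definable $C^r$ bundle $\operatorname{Hom}(\xi,\eta)$, note that the fibrewise isomorphisms form a definable open subset of its total space, and apply Theorem \ref{thm:appro_section} to obtain a definable $C^r$ section inside that open set. Your additional verifications (definable openness of the isomorphism locus and $C^r$-smoothness of the fibrewise inverse via matrix inversion) are details the paper leaves implicit, and they are carried out correctly.
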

\begin{proof}
Let $\xi_1$ and $\xi_2$ be definable $C^r$ vector bundles over a definable $C^r$ manifold $M$ which are definably $C^0$ isomorphic.
Remember that $\operatorname{Hom}(\xi_1,\xi_2)$ is a definable $C^r$ vector bundle over $M$ by Proposition \ref{prop:hometc}. 
Set 
\begin{equation*}
\operatorname{Iso}(\xi_1,\xi_2) = \{ (\phi,x)\;|\; \phi \text{ is an isomorphism between } (\xi_1)_x \text{ and } (\xi_2)_x \}\text{.}
\end{equation*}
It is a definable open subset of the total space of the vector bundle $\operatorname{Hom}(\xi_1,\xi_2)$.
A definable $C^0$ isomorphism between $\xi_1$ and $\xi_2$ corresponds to a definable continuous section of  $\operatorname{Hom}(\xi_1,\xi_2)$ whose image is contained in $\operatorname{Iso}(\xi_1,\xi_2)$.
There exists a definable $C^r$ section of $\operatorname{Hom}(\xi_1,\xi_2)$ contained in $\operatorname{Iso}(\xi_1,\xi_2)$ by Theorem \ref{thm:appro_section}.
This section corresponds to a definable $C^r$ isomorphism between $\xi_1$ and $\xi_2$.
\end{proof}

We give another important corollary of Lemma \ref{lem:escribano_appro}.
\begin{thm}\label{thm:bundle_iso2}
Let $r$ be a nonnegative integer.
Any definable $C^0$ vector bundle over a definable $C^r$ manifold is definably $C^0$ isomorphic to a definable $C^r$ vector bundle.
\end{thm}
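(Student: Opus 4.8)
The plan is to realize the given definable $C^0$ vector bundle as a pullback of the universal bundle over a Grassmannian along a definable $C^0$ map, to approximate that map by a definable $C^r$ one using Lemma \ref{lem:escribano_appro}, and then to check that the two pullbacks are definably $C^0$ isomorphic because the approximating map is close enough to the original.

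First I would observe that Lemma \ref{lem:equiv} is available in the case $r=0$, so there is a positive integer $n$ together with an injective definable $C^0$ morphism $\psi:\xi \hookrightarrow \epsilon_M^n$, and then, exactly as in the proof of Lemma \ref{lem:equiv}, the map $f:M \to \grassman(n,d)$ determined by $\{x\}\times f(x)=\psi(p^{-1}(x))$ is a definable $C^0$ map with $\xi$ definably $C^0$ isomorphic to $f^*(\gamma_{n,d})$. Here I would view $\grassman(n,d)$ as the set $\{A : {}^t\!A=A,\ A^2=A,\ \operatorname{tr}A=d\}$ of orthogonal projection matrices of rank $d$, which is a compact nonsingular real algebraic subset of the space of symmetric matrices; in particular it is an affine definable $C^r$ manifold and $\gamma_{n,d}$ is a definable $C^r$ (in fact $C^\infty$) vector bundle over it.

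Next I would consider the definable open subset $\mathcal O=\{(V,W)\in \grassman(n,d)\times\grassman(n,d) : \pi_W|_V:V\to W \text{ is a linear isomorphism}\}$, where $\pi_W$ is the orthogonal projection of $\mathbb R^n$ onto $W$; it contains the diagonal. Because $\grassman(n,d)$ is compact, the complement of $\mathcal O$ is a compact set disjoint from the (compact) diagonal, so there is a real number $\delta>0$ with the property that $\|V-W\|<\delta$ (distance in the ambient space of symmetric matrices) forces $(V,W)\in\mathcal O$. Applying Lemma \ref{lem:escribano_appro} with the constant positive definable continuous function $\delta$, I get a definable $C^r$ map $g:M\to\grassman(n,d)$ with $\|f(x)-g(x)\|<\delta$ for every $x\in M$, hence $(f(x),g(x))\in\mathcal O$ for all $x$. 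By Proposition \ref{prop:hometc}(i) the pullback $g^*(\gamma_{n,d})$ is a definable $C^r$ vector bundle over $M$.

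Finally I would write down the morphism $f^*(\gamma_{n,d})\to g^*(\gamma_{n,d})$ which over $x\in M$ is the orthogonal projection $\pi_{g(x)}|_{f(x)}:f(x)\to g(x)$: this is fibrewise a linear isomorphism because $(f(x),g(x))\in\mathcal O$, and both it and its fibrewise inverse are definable and continuous in $x$ since $f$ and $g$ are, so it is a definable $C^0$ isomorphism. Composing with $\xi\cong f^*(\gamma_{n,d})$ exhibits $\xi$ as definably $C^0$ isomorphic to the definable $C^r$ vector bundle $g^*(\gamma_{n,d})$. I expect the step requiring the most care to be the passage from ``$f$ and $g$ close'' to ``$f^*(\gamma_{n,d})$ and $g^*(\gamma_{n,d})$ definably $C^0$ isomorphic''; the compactness of $\grassman(n,d)$ is precisely what lets a single constant $\delta$ work and avoids a more delicate o-minimal neighbourhood estimate of the kind in Lemma \ref{lem:appro_sub}.
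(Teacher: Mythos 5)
Your proposal is correct and follows essentially the same route as the paper: embed the bundle via Lemma \ref{lem:equiv}, pass to the classifying map into $\grassman(n,d)$, approximate it by a definable $C^r$ map using Lemma \ref{lem:escribano_appro}, and obtain the definable $C^0$ isomorphism from the fibrewise orthogonal projection (the paper projects onto the original subbundle $\xi$, you project onto the approximating bundle, which is an immaterial difference). The only addition is your explicit compactness argument producing a uniform $\delta$, which makes precise the paper's phrase ``if we take $\Tilde{\varphi}$ sufficiently close to $\varphi$'' and is a welcome clarification rather than a different method.
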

\begin{proof}
Let $\xi$ be a definable $C^0$ vector bundle of rank $d$ over a definable $C^r$ manifold $M$.
There exist a trivial bundle $\epsilon_M^n$ over $M$ and a definable $C^0$ vector subbundle $\xi^\perp$ of $\epsilon_M^n$ such that $\xi$ is a definable $C^r$ vector subbundle of $\epsilon_M^n$ and $\xi \oplus \xi^\perp = \epsilon_M^n$ by Lemma \ref{lem:equiv}.
Let $\varphi: M \rightarrow \grassman(n,d)$ be the definable $C^0$ map naturally induced from the vector bundle $\xi$.
Let $\Tilde{\varphi}: M \rightarrow \grassman(n,d)$ be a definable $C^r$ approximation of $\varphi$ given by Lemma \ref{lem:escribano_appro}.
If we take $\Tilde{\varphi}$ sufficiently close to $\varphi$, we may assume that $\varphi(x)^\perp \cap \Tilde{\varphi}(x) = \{0\}$ for any $x \in M$.
Let $\pi: \epsilon_M^n=\xi \oplus \xi^\perp \rightarrow \xi$ be the orthogonal projection.
Let $\Tilde{\xi}$ be the definable $C^r$ vector bundle on $M$ corresponding to the map $\Tilde{\varphi}$.
The restriction $\pi|_{\Tilde{\xi}}$ of $\pi$ to $\Tilde{\xi}$ gives a definable $C^0$ isomorphism between the vector bundles $\xi$ and $\Tilde{\xi}$.
\end{proof}

\section{Equivalence of definable $C^r$ bilinear forms over vector bundles with bilinear forms over projective modules}\label{sec:equi_bil}

The purpose of this section is to demonstrate an equivalence of the category of definable $C^r$ bilinear spaces over a definable $C^r$ manifold $M$ with the category of bilinear spaces over $\cdfr(M)$.  
We first define definable $C^r$ bilinear spaces.

\begin{definition}[Definable $C^r$ bilinear spaces]
Let $r$ be a nonnegative integer.
Consider a definable $C^r$ manifold $M$ and a definable $C^r$ vector bundle $\xi=(E,p,M)$ over $M$.
A definable $C^r$ function $s: E \oplus E = \{(u,v) \in E \times E\;|\; p(u)=p(v)\} \rightarrow \mathbb R$ is called \textit{definable $C^r$ bilinear form} over the definable $C^r$ vector bundle $\xi$ if the restriction of $s$ to the set $\{(u,v) \in E \times E\;|\; p(u)=p(v)=x\}$ is a nondegenerate symmetric bilinear form for any $x \in M$.
We call the bilinear form $s$ \textit{positive definite} if the inequality $s(v,v)>0$ is satisfied for any $v \in E$ with $v \not=0$ as a vector in the fiber $p^{-1}(p(v))$.
We define a \textit{negative definite} bilinear form in the same way. 
A \textit{definable $C^r$ bilinear space} $(\xi,s)$ over $M$ is a pair of definable $C^r$ vector bundle $\xi$ over $M$ and a definable $C^r$ bilinear form $s$ defined over it.

Let $M'$ be a definable $C^r$ manifold.
Let $(\xi=(E,p,M),s)$ and $(\xi'=(E',p',M'),s')$ be two definable $C^r$ bilinear spaces over $M$ and $M'$, respectively.
Let $\varphi=(u,f)$ be definable $C^r$ morphism from the vector bundle $\xi$ to the vector bundle $\xi'$.
It means that $u:E \rightarrow E'$ and $f:M \rightarrow M'$ are definable $C^r$ maps with $p' \circ u = f \circ p$ such that $u|_{p^{-1}(x)}$ is a linear map for any $x \in M$.  
It is a \textit{definable $C^r$ morphism} from the bilinear space $(\xi,s)$ to the bilinear space $(\xi',s')$ if the equality $s(v,w)=s'(u(v),u(w))$ holds true for any $(v,w) \in E \oplus E$.
We define a \textit{definable $C^r$ $M$-morphism} from $(\xi,s)$ to $(\xi',s')$ in the same way when the base spaces $M$ and  $M'$ are identical.

Two definable $C^r$ bilinear space $(\xi_1=(E_1,p_1,M),s_1)$ and $(\xi_2,s_2)$ over a definable $C^r$ manifold are \textit{definably $C^r$ isometric} if there exists a definable $C^r$ $M$-isomorphism $(u, \operatorname{id}): \xi \rightarrow \xi'$ between definable $C^r$ vector bundles such that $s(v,w)=s'(u(v),u(w))$ for any $(v,w) \in E_1 \oplus E_1$.

Let $\epsilon_M^d$ be the trivial bundle of rank $d$ over a definable $C^r$ manifold $M$.
A definable $C^r$ bilinear space $(\epsilon_M^d,b)$ is a \textit{trivial bilinear space of type $(r_+,r_-)$} if $d=r_++r_-$ and $b((x,(v_1,\ldots,v_d)), (x,(w_1,\ldots, w_d))) = \sum_{i=1}^{r_+} v_iw_i - \sum_{i=1}^{r_-} v_{i+r_+}w_{i+r_+}$. 
The notation $r_+\left<1\right> \perp r_-\left<-1\right>$ denotes this bilinear form.
\end{definition}

\begin{prop}\label{prop:positive}
Let $r$ be a nonnegative integer.
Any definable $C^r$ vector bundle over a definable $C^r$ manifold has a positive definite definable $C^r$ bilinear form defined over it.
\end{prop}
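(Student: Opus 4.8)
The plan is to realize $\xi$ as a subbundle of a trivial bundle and pull back the standard Euclidean inner product. By Lemma \ref{lem:equiv}(i), there is an injective definable $C^r$ morphism $\iota:\xi \rightarrow \epsilon_M^n$ for some nonnegative integer $n$. Write $\epsilon_M^n=(M \times \mathbb R^n,\pi,M)$ and define $b_0:(M\times\mathbb R^n)\oplus(M\times\mathbb R^n)\rightarrow\mathbb R$ by $b_0((x,v),(x,w))=\sum_{i=1}^n v_iw_i$. This is a definable $C^r$ function whose restriction to each fiber is the standard positive definite symmetric bilinear form, so $(\epsilon_M^n,b_0)$ is a positive definite definable $C^r$ bilinear space.

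Next I would set $s(u,v)=b_0(\iota(u),\iota(v))$ for any $(u,v)\in E\oplus E$. Since $\iota$ and $b_0$ are definable and of class $C^r$, so is $s$. Fix $x\in M$. The restriction of $s$ to $\{(u,v)\in E\times E\;|\;p(u)=p(v)=x\}$ is symmetric because $b_0$ is, and it is positive definite: if $v\in p^{-1}(x)$ is nonzero as a vector in the fiber, then $\iota(v)\neq 0$ because $\iota$ is injective as a map of total spaces and hence injective on the linear fiber $p^{-1}(x)$, so $s(v,v)=b_0(\iota(v),\iota(v))$ equals the squared Euclidean norm of the $\mathbb R^n$-component of $\iota(v)$, which is positive. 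A positive definite symmetric bilinear form is nondegenerate, so $s$ is a definable $C^r$ bilinear form over $\xi$, and it is positive definite by construction. Thus $(\xi,s)$ is the required bilinear space.

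There is essentially no serious obstacle here; the only point that deserves (minor) care is that injectivity of $\iota$ as a map of total spaces forces injectivity of each fiberwise linear map $\iota|_{p^{-1}(x)}$, which is exactly what makes $s$ positive definite on every fiber. One could alternatively glue locally defined Euclidean forms by a definable $C^r$ partition of unity from Lemma \ref{lem:unity}, but that approach requires checking nondegeneracy of a sum of positive semidefinite forms, so the pullback construction above is cleaner.
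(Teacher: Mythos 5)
Your proof is correct and follows essentially the same route as the paper: both invoke Lemma \ref{lem:equiv} to place $\xi$ (injectively) inside a trivial bundle $\epsilon_M^n$ and then transport the standard Euclidean form, your pullback along $\iota$ being just a slightly more explicit phrasing of the paper's ``restrict the form to the subbundle.'' The fiberwise-injectivity remark you add is exactly the point that makes positive definiteness immediate, so nothing is missing.
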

\begin{proof}
Let $\xi$ be a definable $C^r$ bilinear space over a definable $C^r$ manifold $M$.
We may assume that $\xi$ be a subbundle of a trivial bundle $\epsilon_M^n$ by Lemma \ref{lem:equiv}.
There exists a positive definite bilinear form over the trivial bundle $\epsilon_M^n$.
Its restriction to $\xi$ is a desired positive definite bilinear form over $\xi$.
\end{proof}

The notation $\sgl(d,\mathbb R)$ denotes the set of all $d \times d$ symmetric invertible matrices.

\begin{prop}\label{prop:bilequiv}
Let $r$ be a nonnegative integer.
 Let $\xi$ be a definable $C^r$ vector bundle of rank $d$ over a definable $C^r$ manifold $M$ with  a definable $C^r$ atlas $(U_i,\phi_i:U_i \times \mathbb R^d \rightarrow p^{-1}(U_i))_{i=1}^q$.
 The definable $C^r$ map $g_{ij}: U_i \cap U_j \rightarrow \gl(d, \mathbb R)$ is a transition map defined in Remark \ref{rem:vector_bundle} for any $1 \leq i, j \leq q$.

There exists a one-to-one correspondence of definable $C^r$ bilinear forms over $\xi$ with families of definable $C^r$ maps  $\{s_i:U_i \rightarrow \sgl(d,\mathbb R)\}_{i=1}^q$ satisfying the equality
\begin{center}
${}^t\!g_{ji}(x)s_j(x)g_{ji}(x)=s_i(x)$
\end{center}
for any $x \in U_i \cap U_j$.
\end{prop}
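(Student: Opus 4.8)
The plan is to set up the correspondence in both directions and check the compatibility condition. Given a definable $C^r$ bilinear form $s$ over $\xi$, I would define, for each $1 \leq i \leq q$, a map $s_i : U_i \rightarrow \sgl(d,\mathbb{R})$ by letting $s_i(x)$ be the matrix of the bilinear form $s$ read through the chart $\phi_i$; concretely, the $(k,\ell)$-entry of $s_i(x)$ is $s(\phi_i(x,e_k),\phi_i(x,e_\ell))$, where $e_1,\dots,e_d$ is the canonical basis of $\mathbb{R}^d$. Since $\phi_i|_{\{x\}\times\mathbb{R}^d}$ is a linear isomorphism and $s$ restricted to the fiber over $x$ is a nondegenerate symmetric bilinear form, $s_i(x)$ is a symmetric invertible matrix, so indeed $s_i(x) \in \sgl(d,\mathbb{R})$. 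Definability and $C^r$-ness of $s_i$ follow because it is the composition of $s$ with the definable $C^r$ map $x \mapsto (\phi_i(x,e_k),\phi_i(x,e_\ell))$ (using that $\phi_i$ is a definable $C^r$ map and $s$ is a definable $C^r$ function on $E\oplus E$).

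Next I would verify the cocycle-type compatibility condition. Fix $x \in U_i \cap U_j$. By the definition of the transition map in Remark \ref{rem:vector_bundle}, $\phi_i(x, g_{ji}(x)v) = \phi_j(x,v)$ for all $v \in \mathbb{R}^d$ (after matching conventions; I would state this relation carefully at the outset). Then, for $v,w \in \mathbb{R}^d$,
\begin{equation*}
s(\phi_j(x,v),\phi_j(x,w)) = s(\phi_i(x,g_{ji}(x)v),\phi_i(x,g_{ji}(x)w)) = {}^t\!(g_{ji}(x)v)\, s_i(x)\, (g_{ji}(x)w),
\end{equation*}
where the last equality uses the definition of $s_i$ and bilinearity. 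The left-hand side equals ${}^t\!v\, s_j(x)\, w$, again by the definition of $s_j$. Since this holds for all $v,w$, we get $s_j(x) = {}^t\!g_{ji}(x)\, s_i(x)\, g_{ji}(x)$, which is exactly the asserted equality.

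Conversely, given a family $\{s_i : U_i \rightarrow \sgl(d,\mathbb{R})\}$ satisfying the compatibility relation, I would define a function $s$ on $E \oplus E$ as follows: for $(u,w)$ with $p(u)=p(w)=x$, pick $i$ with $x \in U_i$, write $\phi_i^{-1}(u) = (x,v)$ and $\phi_i^{-1}(w) = (x,v')$, and set $s(u,w) = {}^t\!v\, s_i(x)\, v'$. Well-definedness (independence of the choice of $i$) is precisely the content of the compatibility relation, by running the computation of the previous paragraph in reverse. The function $s$ so defined is definable and $C^r$ because on each $p^{-1}(U_i)\oplus p^{-1}(U_i)$ it is the composition of the definable $C^r$ chart maps with the definable $C^r$ map $(v,v') \mapsto {}^t\!v\, s_i(x)\, v'$; restricted to each fiber it is a nondegenerate symmetric bilinear form because $s_i(x) \in \sgl(d,\mathbb{R})$. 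Finally I would note that the two constructions are mutually inverse, which is immediate from unwinding the definitions.

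The only mildly delicate points — the "main obstacle," such as it is — are bookkeeping ones: getting the direction of the transition map $g_{ji}$ versus $g_{ij}$ consistent between Remark \ref{rem:vector_bundle} and the statement, and making sure the gluing function $s$ is genuinely well-defined before asserting it is $C^r$. Both are handled by the identity $\phi_i(x,g_{ji}(x)v)=\phi_j(x,v)$ together with the given cocycle relation; there is no real analytic or geometric difficulty, and the local definable $C^r$ structure of the atlas does all the work.
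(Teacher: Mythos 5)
Your proposal is correct and follows essentially the same route as the paper: pull the form back through each chart to get $s_i$, and glue a family $\{s_i\}$ back to a form on $E\oplus E$, the compatibility relation being exactly well-definedness. You even supply the verification of the transition identity that the paper leaves implicit; just be sure, as you note, to fix the convention $\phi_j(x,v)=\phi_i(x,g_{ij}(x)v)$ consistent with the cocycle relation $g_{ij}g_{jk}=g_{ik}$ of Remark \ref{rem:vector_bundle}, since your displayed identity has the roles of $i$ and $j$ swapped relative to the statement (harmless, as the relation is required for all ordered pairs).
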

\begin{proof}
When a bilinear form $s$ over $\xi=(E,p,M)$ is given, we define definable $C^r$ maps $s_i:U_i \rightarrow \sgl(d, \mathbb R)$ by  ${}^t\!vs_i(x)w = s(\phi_i(x,v),\phi_i(x,w))$ for any $x \in U_i$ and $v,w \in \mathbb R^d$.
These maps satisfy the requirement.

On the other hand, if a family of definable $C^r$ maps  $\{s_i:U_i \rightarrow \sgl(d,\mathbb R)\}_{i=1}^q$ is given.
Let $\pi_i:U_i \times \mathbb R^d \rightarrow \mathbb R^d$ be the projection.
Define $s:E \oplus E \rightarrow \mathbb R$ by $s(v,w)= {}^t\!\left(\pi_i(\phi_i^{-1}(v))\right) s_i(p(v)) \pi_i(\phi_i^{-1}(w))$ for any $v,w \in p^{-1}(U_i)$ with $p(v)=p(w)$.
It is easy to check that it is a well-defined definable $C^r$ bilinear form over $\xi$.
\end{proof} 

\begin{definition}[Orthogonal sum and Tensor product]
Let $(\xi_1=(E_1,p_1,M),s_1)$ and $(\xi_2=(E_2,p_2,M),s_2)$ be definable $C^r$ bilinear spaces over the same definable $C^r$ manifold $M$.
Their \textit{orthogonal sum} is the bilinear space $(\xi_1 \oplus \xi_2, s_1 \perp s_2)$ whose vector bundle is the Whitney sum of $\xi_1$ and $\xi_2$, and the bilinear form $s_1 \perp s_2$ is defined as follows:
\begin{center}
$(s_1 \perp s_2)(x_1 \oplus x_2, y_1 \oplus y_2) = s_1(x_1,y_1)+s_2(x_2,y_2)$.
\end{center}
The \textit{tensor product} of two bilinear spaces $(\xi_1,s_1)$ and $(\xi_2,s_2)$ is the bilinear space whose vector bundle is the tensor product of $\xi_1$ and $\xi_2$, and the bilinear form $s_1 \otimes s_2$ is defined as follows:
\begin{center}
$(s_1 \otimes s_2)(x_1 \otimes x_2, y_1 \otimes y_2) = s_1(x_1,y_1)s_2(x_2,y_2)$.
\end{center}
\end{definition}

\begin{prop}
Let $r$ be a nonnegative integer.
Let $(\xi_1,s_1)$ and $(\xi_2,s_2)$ be definable $C^r$ bilinear spaces over the same definable $C^r$ manifold $M$.
Their orthogonal sum $(\xi_1,s_1) \perp (\xi_2,s_2)$ and tensor product $(\xi_1,s_1) \otimes (\xi_2,s_2)$ are definable $C^r$ bilinear spaces over $M$.
\end{prop}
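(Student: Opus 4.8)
The plan is to reduce everything to the local description of definable $C^r$ bilinear forms provided by Proposition \ref{prop:bilequiv}, together with the fact (Proposition \ref{prop:hometc}) that Whitney sums and tensor products of definable $C^r$ vector bundles are again definable $C^r$ vector bundles. Thus the underlying bundle $\xi_1 \oplus \xi_2$ (resp. $\xi_1 \otimes \xi_2$) is already known to be a definable $C^r$ vector bundle over $M$, and what remains is to verify that the forms $s_1 \perp s_2$ and $s_1 \otimes s_2$ are definable $C^r$ bilinear forms over it, i.e. that they are definable $C^r$ functions on the appropriate fibre products and restrict to a nondegenerate symmetric bilinear form on each fibre.

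First I would fix a common definable $C^r$ atlas. Choosing definable $C^r$ atlases $(U_i,\phi_i^{(1)})_{i=1}^q$ of $\xi_1$ and $(U_i,\phi_i^{(2)})_{i=1}^q$ of $\xi_2$ over the same finite definable open covering $\{U_i\}_{i=1}^q$ (passing to a common refinement if necessary), with transition maps $g_{ij}^{(1)}$ and $g_{ij}^{(2)}$, one obtains a definable $C^r$ atlas of $\xi_1 \oplus \xi_2$ whose transition maps are the block matrices $g_{ij}^{(1)} \oplus g_{ij}^{(2)}$, and a definable $C^r$ atlas of $\xi_1 \otimes \xi_2$ whose transition maps are the Kronecker products $g_{ij}^{(1)} \otimes g_{ij}^{(2)}$. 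Let $s_i^{(1)}:U_i \to \sgl(d_1,\mathbb R)$ and $s_i^{(2)}:U_i \to \sgl(d_2,\mathbb R)$ be the families of definable $C^r$ maps associated to $s_1$ and $s_2$ by Proposition \ref{prop:bilequiv}.

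Next I would exhibit the corresponding families for the two new forms and invoke the converse direction of Proposition \ref{prop:bilequiv}. For the orthogonal sum, the natural candidate is $s_i := s_i^{(1)} \oplus s_i^{(2)}$ (block diagonal), which is symmetric and invertible since $\det s_i = \det s_i^{(1)} \cdot \det s_i^{(2)} \neq 0$; the compatibility relation ${}^t(g_{ji}^{(1)} \oplus g_{ji}^{(2)})\,(s_j^{(1)} \oplus s_j^{(2)})\,(g_{ji}^{(1)} \oplus g_{ji}^{(2)}) = ({}^tg_{ji}^{(1)} s_j^{(1)} g_{ji}^{(1)}) \oplus ({}^tg_{ji}^{(2)} s_j^{(2)} g_{ji}^{(2)}) = s_i^{(1)} \oplus s_i^{(2)}$ follows immediately from the block structure. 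For the tensor product, the candidate is $s_i := s_i^{(1)} \otimes s_i^{(2)}$; it is symmetric because a Kronecker product of symmetric matrices is symmetric, and invertible because $(A \otimes B)^{-1} = A^{-1} \otimes B^{-1}$. The compatibility relation uses the mixed-product identity $(A \otimes B)(C \otimes D) = (AC) \otimes (BD)$ and ${}^t(A \otimes B) = {}^tA \otimes {}^tB$, giving ${}^t(g_{ji}^{(1)} \otimes g_{ji}^{(2)})\,(s_j^{(1)} \otimes s_j^{(2)})\,(g_{ji}^{(1)} \otimes g_{ji}^{(2)}) = ({}^tg_{ji}^{(1)} s_j^{(1)} g_{ji}^{(1)}) \otimes ({}^tg_{ji}^{(2)} s_j^{(2)} g_{ji}^{(2)}) = s_i$. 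One then checks that the bilinear form produced from $\{s_i\}$ via Proposition \ref{prop:bilequiv} is exactly $s_1 \perp s_2$ (resp. $s_1 \otimes s_2$), and Proposition \ref{prop:bilequiv} yields that it is a definable $C^r$ bilinear form.

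There is no serious obstacle here; the argument is essentially bookkeeping. The only point worth an explicit remark is the nondegeneracy of the tensor-product form on each fibre — equivalently, the invertibility of the Kronecker product $s_i^{(1)} \otimes s_i^{(2)}$ — which is the one linear-algebra fact that should be recalled or cited. Alternatively, one can bypass the local description and argue fibrewise: over the point $x$, the form $s_1 \perp s_2$ is the orthogonal direct sum and $s_1 \otimes s_2$ the tensor product of the nondegenerate symmetric bilinear forms $(s_1)_x$ and $(s_2)_x$, hence again nondegenerate and symmetric, while definability and $C^r$-smoothness are apparent from the defining formulas read in local trivializations.
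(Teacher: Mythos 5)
Your proposal is correct; the paper omits the proof entirely (it simply writes ``Obvious''), and your argument supplies exactly the routine verification intended: the underlying bundles $\xi_1 \oplus \xi_2$ and $\xi_1 \otimes \xi_2$ are definable $C^r$ by Proposition \ref{prop:hometc}, and the forms correspond, via Proposition \ref{prop:bilequiv}, to the block-diagonal sums and Kronecker products of the local matrix families, whose symmetry, invertibility and compatibility with the transition functions you check correctly. Nothing is missing; the fibrewise remark at the end is an equally valid shortcut.
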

\begin{proof}
Obvious.
\end{proof}

\begin{definition}
Let $(\xi=(E,p,N),s)$ be a definable $C^r$ bilinear spaces over a definable $C^r$ manifold $N$.
Consider a definable $C^r$ map $f:M \rightarrow N$ between definable $C^r$ manifolds.
The \textit{definable $C^r$ bilinear space $f^*(\xi, s)$ induced by $f$} is a definable $C^r$ bilinear space whose vector bundle is $f^*\xi$ and whose definable $C^r$ bilinear form is $f^*s$ defined by
\begin{equation*}
f^*s((x,v),(x,w))= s(v,w)
\end{equation*}
for $x \in M$ and $v,w \in E$ with $p(v)=p(w)=f(x)$. 
\end{definition}

We next review the definition of bilinear spaces over a commutative ring.
The following definitions are found in \cite[Section 15.1]{BCR} and \cite{MH}.
\begin{definition}[Bilinear spaces over a commutative ring]
Let $R$ be a commutative ring.
Let $P$ be a finitely generated projective module over $R$ and $b:P \times P \rightarrow R$ be a symmetric bilinear form.
The bilinear form $b$ is nondegenerate if the linear map $h_b:P \rightarrow P^{\vee}$ from $P$ into its dual, induced by $b$, is an isomorphism.
A \textit{bilinear space} over $R$ is a pair $(P,b)$, where $P$ is a finitely generated projective $R$-module and $b:P \times P \rightarrow R$ is a nondegenerate symmetric bilinear form.
A \textit{morphism} from a bilinear space $(P,b)$ to another bilinear space $(P',b')$ is a homomorphism $\varphi:P \rightarrow P'$ between $R$-modules with $b'(\varphi(v),\varphi(w)) = b(v,w)$ for any $v,w \in P$. 
An \textit{isometry} of two bilinear spaces $(P,b)$ and $(P',b')$ over $R$ is an  isomorphism $\varphi: P \rightarrow P'$ between $R$-modules with $b'(\varphi(v),\varphi(w)) = b(v,w)$ for any $v,w \in P$.

Let $(P_1,b_1)$ and $(P_2,b_2)$ be bilinear spaces over the same commutative ring $R$.
Their \textit{orthogonal sum} is the bilinear space $(P_1 \oplus P_2, b_1 \perp b_2)$ whose projective module is the direct sum of $P_1$ and $P_2$, and the bilinear form $b_1 \perp b_2$ is defined as follows:
\begin{center}
$(b_1 \perp b_2)(x_1 \oplus x_2, y_1 \oplus y_2) = b_1(x_1,y_1)+b_2(x_2,y_2)$.
\end{center}
The \textit{tensor product} of $(P_1,b_1)$ and $(P_2,b_2)$ is the bilinear space whose projective module is the tensor product of $P_1$ and $P_2$, and the bilinear form $b_1 \otimes b_2$ is defined as follows:
\begin{center}
$(b_1 \otimes b_2)(x_1 \otimes x_2, y_1 \otimes y_2) = b_1(x_1,y_1)b_2(x_2,y_2)$.
\end{center}
It is obvious that $P_1 \oplus P_2$ and $P_1 \otimes P_2$ are finitely generated projective $R$-modules because an $R$-module is projective if and only if it is a direct summand of a free module \cite[Proposition A3.1]{Eisenbud}.
\end{definition}

\begin{notation}\label{not:bil}
Let $M$ be a definable $C^r$ manifold, where $r$ is a nonnegative integer. 
The notation $\bdfr(M)$ denotes the category whose objects are definable $C^r$ bilinear spaces over $M$ and whose arrows are definable $C^r$ $M$-morphisms between them.
Let $R$ be a commutative ring.
The notation $\bilsp(R)$ denotes the category whose objects are bilinear spaces over $R$ and whose arrows are morphisms between bilinear spaces over $R$.

For any definable $C^r$ bilinear space $(\xi,s)$ over $M$, we define a bilinear space $(P,b)=\funcbil(\xi,s)$ over the commutative ring $\cdfr(M)$ as follows:
The finitely generated projective module $P$ is $\funcvec(\xi)$ and the bilinear form $b: \funcvec(\xi) \times \funcvec(\xi) \rightarrow \cdfr(M)$ over it is given by
\begin{equation*}
b(\sigma,\sigma')(x) = s(\sigma(x),\sigma'(x))
\end{equation*}
for any $\sigma, \sigma' \in \funcvec(\xi)$ and $x \in M$.
We denote the bilinear form $b$ as $\funcbil(s)$ by abuse of notation.
A definable $C^r$ $M$-morphism between definable $C^r$ bilinear spaces over $M$ is simultaneously a definable $C^r$ $M$-morphisms between definable $C^r$ vector bundles.
The notation $\funcbil(u)$ denotes the homomorphism $\funcvec(u)$ between finitely generated projective $\cdfr(M)$-modules for any definable $C^r$ $M$-morphism $u$ between definable $C^r$ bilinear forms.
\end{notation}

We show that the above map $\funcbil$ is a covariant functor from the category $\bdfr(M)$ to the category $\bilsp(\cdfr(M))$.

\begin{prop}\label{prop:func1}
Let $M$ be a definable $C^r$ manifold, where $r$ is a nonnegative integer. 
The map $\funcbil$ is a covariant functor from the category $\bdfr(M)$ to the category $\bilsp(\cdfr(M))$.
\end{prop}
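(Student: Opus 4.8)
The plan is to verify, in turn, that $\funcbil$ is well-defined on objects, well-defined on morphisms, and respects identities and composition. Most of the work is already contained in Notation \ref{not:bil} and in Proposition \ref{prop:func1} for vector bundles; the one genuinely new point is the nondegeneracy of the bilinear form $\funcbil(s)$ on the module $\funcvec(\xi)$.

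First I would check that, for a definable $C^r$ bilinear space $(\xi,s)$ over $M$, the pair $\funcbil(\xi,s)=(\funcvec(\xi),\funcbil(s))$ is a bilinear space over $\cdfr(M)$. That $\funcvec(\xi)$ is a finitely generated projective $\cdfr(M)$-module is exactly Proposition \ref{prop:func1} (the vector-bundle version). The map $b=\funcbil(s)$ lands in $\cdfr(M)$ because $s$ is definable and $C^r$ and $\sigma,\sigma'$ are definable $C^r$ sections; bilinearity over $\cdfr(M)$ and symmetry are immediate from the pointwise bilinearity and symmetry of $s$. The main point is nondegeneracy: I must show the induced homomorphism $h_b:\funcvec(\xi)\to\funcvec(\xi)^{\vee}$ is an isomorphism. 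Here I would use Main Theorem \ref{thm:main1}: the functor $\funcvec$ is an equivalence, and under it the dual module $\funcvec(\xi)^{\vee}$ corresponds to $\funcvec(\xi^{\vee})$. The fibrewise nondegeneracy of $s$ gives a definable $C^r$ $M$-isomorphism $\xi\to\xi^{\vee}$ (send $v$ to $s(v,\cdot)$, which is definable and $C^r$, and is a fibrewise isomorphism since each $s|_{p^{-1}(x)}$ is nondegenerate), and applying $\funcvec$ to this isomorphism yields precisely $h_b$; being the image of an isomorphism under a functor, $h_b$ is an isomorphism. Alternatively, one can argue locally via Proposition \ref{prop:bilequiv}: on each $U_i$ the form is given by $s_i:U_i\to\sgl(d,\mathbb R)$, so $h_b$ is locally represented by the invertible matrix $s_i(x)$, hence is a local isomorphism, hence an isomorphism.

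Next I would treat morphisms. If $u:(\xi,s)\to(\xi',s')$ is a definable $C^r$ $M$-morphism of bilinear spaces, then in particular it is a definable $C^r$ $M$-morphism of the underlying vector bundles, so $\funcbil(u):=\funcvec(u)$ is a homomorphism of $\cdfr(M)$-modules by Proposition \ref{prop:func1}. The defining compatibility $s(v,w)=s'(u(v),u(w))$ for all $(v,w)\in E\oplus E$ implies, evaluating at $v=\sigma(x)$, $w=\sigma'(x)$, that $\funcbil(s)(\sigma,\sigma')(x)=\funcbil(s')(\funcvec(u)(\sigma),\funcvec(u)(\sigma'))(x)$ for every $x\in M$, i.e. $\funcbil(u)$ is a morphism in $\bilsp(\cdfr(M))$. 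Functoriality, namely $\funcbil(\operatorname{id})=\operatorname{id}$ and $\funcbil(v\circ u)=\funcbil(v)\circ\funcbil(u)$, is inherited verbatim from the corresponding properties of $\funcvec$, which were established in Proposition \ref{prop:func1}.

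The only step requiring any thought is the nondegeneracy of $\funcbil(s)$; everything else is a routine transcription of the vector-bundle case together with the pointwise identities defining bilinear spaces and their morphisms. I expect to present the argument concisely, citing Main Theorem \ref{thm:main1} and Proposition \ref{prop:bilequiv}, and to omit the elementary verifications. I would likely close the proof by remarking that the remaining assertions are proved in the same way as in Proposition \ref{prop:func1}.
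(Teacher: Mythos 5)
Your proposal is correct and follows essentially the same route as the paper: the morphism compatibility is verified by exactly the same pointwise evaluation $b'(\funcbil(u)(\sigma),\funcbil(u)(\sigma'))(x)=s'(u(\sigma(x)),u(\sigma'(x)))=s(\sigma(x),\sigma'(x))$, and functoriality is inherited from $\funcvec$. The only difference is that you spell out the nondegeneracy of $\funcbil(s)$ (via the definable $C^r$ isomorphism $\xi\to\xi^{\vee}$ and the equivalence of Main Theorem \ref{thm:main1}, or locally via Proposition \ref{prop:bilequiv}), a point the paper dismisses as ``easy to check''; your argument for it is sound.
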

\begin{proof}
It is easy to check that the map $b$ defined in Notation \ref{not:bil} is a bilinear form over $\funcvec(\xi)$.
We show that $\funcbil(u)$ is a morphism between bilinear spaces for any definable $C^r$ $M$-morphism $u$ from a definable $C^r$ bilinear space $(\xi,s)$ over $M$ to a definable $C^r$ bilinear space $(\xi',s')$ over $M$.
Set $(P,b)=\funcbil(\xi,s)$ and $(P',b')=\funcbil(\xi',s')$.
We have only to show that $b'(\funcbil(u)(\sigma), \funcbil(u)(\sigma'))=b(\sigma,\sigma')$ for any $\sigma,\sigma' \in \funcvec(\xi)$.
Let $x \in M$ be fixed.
We have 
\begin{equation*}
b'(\funcbil(u)(\sigma), \funcbil(u)(\sigma'))(x) = s'(u \circ \sigma(x), u \circ \sigma'(x)) = s(\sigma(x),\sigma'(x)) = b(\sigma,\sigma')(x)\text{.}
\end{equation*}
We have finished the proof.
\end{proof}

Finally, we prove Main Theorem  \ref{thm:main2} introduced in Section \ref{sec:intro}.

\begin{thm}\label{thm:main2_0}
Let $M$ be a definable $C^r$ manifold, where $r$ is a nonnegative integer. 
Then, the functor $\funcbil$ is an equivalence of the category $\bdfr(M)$ with the category $\bilsp(\cdfr(M))$.
\end{thm}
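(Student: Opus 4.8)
The plan is to mimic the proof of Theorem \ref{thm:main1_0}, upgrading each step to keep track of the bilinear form. By \cite[Chapter IV, Section 4, Theorem 1]{Category} it suffices to show that $\funcbil$ is faithful, full, and essentially surjective. Faithfulness is immediate: a $\funcbil$-morphism is a $\funcvec$-morphism, and $\funcvec$ is faithful by Theorem \ref{thm:main1_0}, so two definable $C^r$ $M$-morphisms of bilinear spaces inducing the same module homomorphism coincide as maps of the underlying vector bundles, hence coincide. For fullness, let $(\xi,s)$ and $(\xi',s')$ be definable $C^r$ bilinear spaces over $M$ and let $\Phi\colon \funcbil(\xi,s)\to\funcbil(\xi',s')$ be a morphism of bilinear spaces over $\cdfr(M)$. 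Forgetting the forms, $\Phi$ is a $\cdfr(M)$-module homomorphism $\funcvec(\xi)\to\funcvec(\xi')$, so by the fullness half of Theorem \ref{thm:main1_0} there is a definable $C^r$ $M$-morphism $u\colon\xi\to\xi'$ of vector bundles with $\funcvec(u)=\Phi$. It remains to verify that $u$ respects the forms, i.e.\ $s'(u(v),u(w))=s(v,w)$ for all $(v,w)\in E\oplus E$. Fix such $v,w$ lying over $x\in M$; by Lemma \ref{lem:sections} choose definable $C^r$ sections $\sigma,\sigma'$ of $\xi$ with $\sigma(x)=v$, $\sigma'(x)=w$. Then
\begin{equation*}
s'(u(v),u(w)) = s'(u\circ\sigma(x), u\circ\sigma'(x)) = \funcbil(s')(\Phi(\sigma),\Phi(\sigma'))(x) = \funcbil(s)(\sigma,\sigma')(x) = s(v,w),
\end{equation*}
the third equality because $\Phi$ is a morphism of bilinear spaces. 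Hence $u$ is a definable $C^r$ $M$-morphism of bilinear spaces and $\funcbil(u)=\Phi$, proving fullness.

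For essential surjectivity, let $(P,b)$ be a bilinear space over $\cdfr(M)$. By Theorem \ref{thm:main1_0} there is a definable $C^r$ vector bundle $\xi=(E,p,M)$ with an isomorphism $\theta\colon\funcvec(\xi)\xrightarrow{\sim}P$; transporting $b$ back, we get a nondegenerate symmetric $\cdfr(M)$-bilinear form $\beta=\theta^*b$ on $\funcvec(\xi)$, and it suffices to realize $\beta$ as $\funcbil(s)$ for some definable $C^r$ bilinear form $s$ over $\xi$. Using a definable $C^r$ atlas $(U_i,\phi_i)_{i=1}^q$ of $\xi$ with transition maps $g_{ij}$, by Proposition \ref{prop:bilequiv} it is enough to produce definable $C^r$ maps $s_i\colon U_i\to\sgl(d,\mathbb R)$ with ${}^t g_{ji}\,s_j\,g_{ji}=s_i$ on $U_i\cap U_j$, whose associated form $s$ satisfies $\funcbil(s)=\beta$. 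The natural candidate: pick definable $C^r$ sections $e_{i,1},\dots,e_{i,d}$ of $\xi|_{U_i}$ given by $e_{i,k}=\phi_i(\cdot,e_k)$, and one is tempted to set $(s_i(x))_{k\ell}=\beta(e_{i,k},e_{i,\ell})(x)$ — but $e_{i,k}$ is only a section over $U_i$, not over $M$, so $\beta(e_{i,k},e_{i,\ell})$ is not literally defined. I would fix this with a partition of unity $\{\lambda_i\}$ subordinate to $\{U_i\}$ (Lemma \ref{lem:unity}) and a finite generating family $s_1,\dots,s_m$ of $\funcvec(\xi)$ (Lemmas \ref{lem:sections}, \ref{lem:sections2}) to express each $\lambda_i^{1/2}e_{i,k}$, extended by zero, as a global definable $C^r$ section, then define $s$ fiberwise by $s(v,w)$ for $v,w\in p^{-1}(x)$ via writing $v=\sum a_k e_{i,k}(x)$, $w=\sum b_\ell e_{i,\ell}(x)$ in a chart containing $x$ and setting $s(v,w)=\sum_{k,\ell}a_k b_\ell\,\beta(\widetilde{e_{i,k}},\widetilde{e_{i,\ell}})(x)/(\lambda_i(x))$ where $\widetilde{\cdot}$ denotes the globalized section — on the locus $\lambda_i(x)>0$ — and checking this is independent of $i$ using the isometry property and the cocycle condition. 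This produces a well-defined definable $C^r$ family $\{s_i\}$ compatible with the $g_{ij}$, hence a definable $C^r$ bilinear form $s$ over $\xi$.

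The one remaining point is nondegeneracy of $s$ at each $x\in M$: one must check the fiberwise form $s|_{p^{-1}(x)\times p^{-1}(x)}$ is nondegenerate. This follows from nondegeneracy of $b$ together with the standard local-global principle that a symmetric bilinear form on a finitely generated projective module over a commutative ring is nondegenerate if and only if its reduction modulo every maximal ideal is nondegenerate (equivalently, the base change to each residue field is nondegenerate); applying this to $\mathfrak m_x\subset\cdfr(M)$ and identifying $\funcvec(\xi)\otimes_{\cdfr(M)}\cdfr(M)/\mathfrak m_x\cong p^{-1}(x)$ via evaluation gives exactly that $s_x$ is nondegenerate. Then $(\xi,s)$ is a definable $C^r$ bilinear space and $\funcbil(\xi,s)$ is isometric to $(P,b)$ via $\theta$. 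The main obstacle in the whole argument is precisely this essential-surjectivity step — specifically, cleanly globalizing the locally-defined Gram matrices $s_i$ while simultaneously verifying both the cocycle compatibility ${}^t g_{ji}\,s_j\,g_{ji}=s_i$ and that $\funcbil(s)$ reproduces $\beta$ on all global sections, not merely on the chosen generators; the faithfulness and fullness parts are routine once Theorem \ref{thm:main1_0} is in hand.
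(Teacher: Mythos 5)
Your faithfulness and fullness arguments coincide with the paper's: both reduce to Theorem \ref{thm:main1_0} and then verify compatibility with the forms by evaluating on global sections through a given point, exactly as you do. The problem is the essential-surjectivity step, which you yourself flag as unfinished, and which is where the real content of the theorem lies. The paper does not go through Proposition \ref{prop:bilequiv} and local Gram matrices at all. It defines $s$ globally and pointwise: for $v,w$ in the fiber over $x$, choose global definable $C^r$ sections $\sigma_v,\sigma_w$ with $\sigma_v(x)=v$, $\sigma_w(x)=w$ (Lemma \ref{lem:sections}) and set $s(v,w)=\bigl(b(\psi(\sigma_v),\psi(\sigma_w))\bigr)(x)$, where $\psi\colon\funcvec(\xi)\to P$ is the module isomorphism. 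Well-definedness is the maximal-ideal (locality) argument: if two sections agree at $x$, their difference lies in $\mathfrak{m}_x\funcvec(\xi)$, so the corresponding values of $b$ differ by an element of $\mathfrak{m}_x$ and agree at $x$. Definability and $C^r$-smoothness are then checked on a refined cover in a frame made of finitely many \emph{global} generating sections (Lemmas \ref{lem:sections}, \ref{lem:sections2}), and $\funcbil(\xi,s)\cong(P,b)$ via $\psi$ is immediate from the formula.

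This locality statement is precisely what your sketch lacks, and without it your construction does not close. Your frame sections $e_{i,k}$ are not global and $\beta$ is only defined on global sections; the partition-of-unity repair has technical flaws ($\lambda_i^{1/2}$ need not be $C^r$ where $\lambda_i$ vanishes when $r\geq 1$, and division by $\lambda_i(x)$ only makes sense on $\{\lambda_i>0\}$, a proper subset of $U_i$), and even after repairing these, proving that the resulting matrices are independent of the choices, satisfy the cocycle relation, and that $\funcbil(s)=\beta$ on \emph{all} sections (not just frame combinations with constant coefficients) requires exactly the fact that $\beta(\sigma,\sigma')(x)$ depends only on $\sigma,\sigma'$ modulo $\mathfrak{m}_x\funcvec(\xi)$ --- an argument you never supply. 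Once that fact is in hand, the detour through Proposition \ref{prop:bilequiv} is unnecessary: define $s$ pointwise as above and verify smoothness locally. Your closing remark on nondegeneracy via reduction modulo $\mathfrak{m}_x$ is reasonable (the paper leaves fiberwise nondegeneracy implicit), but note that identifying the fiber $p^{-1}(x)$ with $\funcvec(\xi)/\mathfrak{m}_x\funcvec(\xi)$ again rests on the same evaluation/locality fact, so it cannot be used to bypass it.
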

\begin{proof}
In the same way as the proof of Theorem \ref{thm:main1_0}, we have to show that the functor $\funcbil$ is faithful and full, furthermore; for any bilinear space $(P,b)$ over the ring $\cdfr(M)$, there is a definable $C^r$ bilinear space $(\xi,s)$ such that $\funcbil(\xi,s)$ is isotropic to $(P,b)$ by \cite[Chapter IV, Section 4, Theorem 1]{Category}.
The functor $\funcbil$ is obviously faithful because the functor $\funcvec$ is faithful.

We next show that the functor $\funcbil$ is full.
Consider an arbitrary morphism $\varphi:\funcbil(\xi,s) \rightarrow \funcbil(\xi',s')$ of the category $\bilsp(\cdfr(M))$. 
Since the functor $\funcvec$ is full by Theorem \ref{thm:main1_0}, there exists a definable $C^r$ morphism $u:\xi \rightarrow \xi'$ with $\funcvec(u)=\varphi$.
Let $\xi=(E,p,M)$.
We want to show that this $u$ is also a morphism between $(\xi,s)$ and $(\xi',s')$ in the category $\bdfr(M)$.
We have only to show $s(v,w)=s'(u(v),u(w))$ for any $v,w \in E$ with $p(v)=p(w)$.
Set $x=p(v)=p(w)$.
There exist definable $C^r$ sections $\sigma_v, \sigma_w$ of $\xi$ with $\sigma_v(x)=v$ and $\sigma_w(x)=w$ by Lemma \ref{lem:sections}.  
The following calculation indicates that $u$ is a morphism between definable $C^r$ bilinear spaces $(\xi,s)$ and $(\xi',s')$.
\begin{align*}
s'(u(v),u(w)) &= s'(u(\sigma_v(x)), u(\sigma_w(x))) = (\funcbil(s')(\funcvec(u)(\sigma_v), \funcvec(u)(\sigma_w)))(x)\\
&=(\funcbil(s')(\varphi(\sigma_v),\varphi(\sigma_w)))(x) = (\funcbil(s)(\sigma_v,\sigma_w))(x)\\
&=s(\sigma_v(x),\sigma_w(x))=s(v,w)\text{.}
\end{align*}

We finally construct a definable $C^r$ bilinear space $(\xi,s)$ such that $\funcbil(\xi,s)$ is isotropic to $(P,b)$ for a given bilinear space $(P,b)$ over the ring $\cdfr(M)$.
We have already shown that $\funcvec$ gives an equivalence of the category $\vdfr(M)$ with the category $\proj(C_{\text{df}}^r(M))$ in Theorem \ref{thm:main1_0}.
There exists a definable $C^r$ vector bundle $\xi=(E,p,M)$ of rank $d$ such that $\funcvec(\xi)$ is isomorphic to the module $P$.
We have only to construct a definable $C^r$ bilinear form $s$ over $\xi$ such that $\funcbil(\xi,s)$ is isotropic to $(P,b)$.

Let $\psi:\funcvec(\xi) \rightarrow P$ be an isomorphism between $\cdfr(M)$-modules.
For any $v,w \in E$ with $p(v)=p(w)=x$, there exists definable $C^r$ sections $\sigma_v, \sigma_w \in \funcvec(\xi)$ with $\sigma_v(x) =v$ and $\sigma_w(x)=w$ by Lemma \ref{lem:sections}.
We set 
\begin{equation*}
s(u,v) = (b(\psi(\sigma_v),\psi(\sigma_w)))(x)\text{.}
\end{equation*}
We want to demonstrate that $s:E \oplus E \rightarrow \mathbb R$ is a definable $C^r$ bilinear form over $\xi$.
We first show that $s$ is a well-defined map.
Let $\sigma'_v$ and $\sigma'_w$ be definable $C^r$ sections of $\xi$ with $\sigma_v'(x)=v$ and $\sigma_w'(x)=w$.
The notation $\mathfrak{m}_x$ denotes the set of definable $C^r$ functions vanishing at the point $x \in M$.
It is a maximal ideal of $\cdfr(M)$.
Since $\sigma_v-\sigma_v' \in \mathfrak{m}_x\funcvec(\xi)$, we have $b(\psi(\sigma_v),\psi(\sigma_w))-b(\psi(\sigma_v'),\psi(\sigma_w)) = b(\psi(\sigma_v-\sigma_v'),\psi(\sigma_w)) \in \mathfrak{m}_x$.
We get $b(\psi(\sigma_v),\psi(\sigma_w))(x) = b(\psi(\sigma_v'),\psi(\sigma_w))(x)$.
We can show that $b(\psi(\sigma_v'),\psi(\sigma_w))(x) = b(\psi(\sigma_v'),\psi(\sigma_w'))(x)$ in the same way.
We have shown that
\begin{equation*}
b(\psi(\sigma_v),\psi(\sigma_w))(x) = b(\psi(\sigma_v'),\psi(\sigma_w'))(x)\text{.}
\end{equation*}
It means that $s$ is a well-defined map.

We next show that $s$ is a definable $C^r$ bilinear form over $\xi$.
Let $(U_i,\phi_i:U_i \times \mathbb R^d \rightarrow p^{-1}(U_i))_{i=1}^q$ be a definable $C^r$ atlas of $\xi$.
We can get definable $C^r$ sections $\sigma_1, \ldots, \sigma_m$ which generate $p^{-1}(x)$ for any $x \in M$ by Lemma \ref{lem:sections}.
Fix $1 \leq i \leq q$.
We may assume that $\sigma_1(x), \ldots \sigma_d(x)$ generate $p^{-1}(x)$ for any $x \in U_i$ in the same way as the proof of Theorem \ref{thm:main1_0}.
Let $\tau$ be the definable $C^r$ isomorphism from $U_i \times \mathbb R^d \times \mathbb R^d$ onto $p^{-1}(U_i) \oplus p^{-1}(U_i)$ defined by $\tau(x,(a_1,\ldots,a_d),(b_1,\ldots,b_d))=(\sum_{j=1}^d a_j\sigma_j(x), \sum_{j=1}^d b_j\sigma_j(x))$.
The composition $s|_{p^{-1}(U_i) \oplus p^{-1}(U_i)} \circ \tau$ is given by 
\begin{equation*}
s|_{p^{-1}(U_i) \oplus p^{-1}(U_i)} \circ \tau(x,(a_1,\ldots,a_d),(b_1,\ldots,b_d)) = \sum_{j,k=1}^d a_j  b_k  b(\psi(\sigma_j),\psi(\sigma_k))(x)\text{.}
\end{equation*}
It is a definable $C^r$ bilinear form over $U_i \times \mathbb R^d$.
Hence, the restriction of $s$ to $p^{-1}(U_i) \oplus p^{-1}(U_i)$ is a definable $C^r$ bilinear form over the restriction $\xi|_{U_i}$ of $\xi$ to $U_i$.
It means that $s$ itself is a definable $C^r$ bilinear form over $\xi$.

We finally show that $\funcbil(\xi,s)$ is isotropic to $(P,b)$.
We have $\funcbil(s)(\sigma_1,\sigma_2)(x)=s(\sigma_1(x),\sigma_2(x))=(b(\psi(\sigma_1),\psi(\sigma_2)))(x)$ for any $\sigma_1,\sigma_2 \in \funcvec(\xi)$ and $x \in M$.
Hence, we have $\funcbil(s)(\sigma_1,\sigma_2)=b(\psi(\sigma_1),\psi(\sigma_2))$.
We have shown that $\funcbil(\xi,s)$ is isotropic to $(P,b)$ via the isotropy $\psi$.
\end{proof}

\begin{prop}\label{funcbilin_plus}
Let $M$ be a definable $C^r$ manifold, where $r$ is a nonnegative integer. 
Then, we have $\funcbil((\xi_1,s_1) \perp (\xi_2,s_2))= \funcbil(\xi_1,s_1) \perp \funcbil(\xi_2,s_2)$ and $\funcbil((\xi_1,s_1) \otimes (\xi_2,s_2))= \funcbil(\xi_1,s_1) \otimes \funcbil(\xi_2,s_2)$ for any definable $C^r$ bilinear spaces $(\xi_1,s_1)$ and $(\xi_2,s_2)$ over $M$.
\end{prop}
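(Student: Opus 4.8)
The plan is to reduce the whole statement to the module-level identifications already recorded in Proposition \ref{funcvecbun_plus} and then to check that the bilinear forms agree under those identifications by a direct pointwise computation, exploiting bilinearity in the tensor case to pass from decomposable sections to arbitrary ones.

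First I would treat the orthogonal sum. By Proposition \ref{funcvecbun_plus}, the underlying module of $\funcbil((\xi_1,s_1) \perp (\xi_2,s_2))$ is $\funcvec(\xi_1 \oplus \xi_2) = \funcvec(\xi_1) \oplus \funcvec(\xi_2)$, which is exactly the underlying module of $\funcbil(\xi_1,s_1) \perp \funcbil(\xi_2,s_2)$. Under this identification a definable $C^r$ section of $\xi_1 \oplus \xi_2$ is a pair $(\sigma_1,\sigma_2)$ with $\sigma_i \in \funcvec(\xi_i)$, and for any two such pairs and any $x \in M$ one computes
\begin{equation*}
\funcbil(s_1 \perp s_2)((\sigma_1,\sigma_2),(\sigma_1',\sigma_2'))(x) = (s_1 \perp s_2)((\sigma_1(x),\sigma_2(x)),(\sigma_1'(x),\sigma_2'(x))) = s_1(\sigma_1(x),\sigma_1'(x)) + s_2(\sigma_2(x),\sigma_2'(x)),
\end{equation*}
which is precisely $(\funcbil(s_1) \perp \funcbil(s_2))((\sigma_1,\sigma_2),(\sigma_1',\sigma_2'))(x)$ by the definition of $\funcbil$ and of the orthogonal sum of bilinear forms over a ring. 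Hence the two bilinear spaces coincide.

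For the tensor product I would argue in the same spirit, using that Proposition \ref{funcvecbun_plus} identifies $\funcvec(\xi_1 \otimes \xi_2)$ with $\funcvec(\xi_1) \otimes_{\cdfr(M)} \funcvec(\xi_2)$ via the map sending $\sigma_1 \otimes \sigma_2$ to the section $x \mapsto \sigma_1(x) \otimes \sigma_2(x)$. On a pair of decomposable sections $\sigma_1 \otimes \sigma_2$ and $\sigma_1' \otimes \sigma_2'$ one gets, for every $x \in M$,
\begin{equation*}
\funcbil(s_1 \otimes s_2)(\sigma_1 \otimes \sigma_2, \sigma_1' \otimes \sigma_2')(x) = s_1(\sigma_1(x),\sigma_1'(x))\, s_2(\sigma_2(x),\sigma_2'(x)) = \big(\funcbil(s_1)(\sigma_1,\sigma_1') \cdot \funcbil(s_2)(\sigma_2,\sigma_2')\big)(x),
\end{equation*}
which is the value of $\funcbil(s_1) \otimes \funcbil(s_2)$ on the same pair of decomposable tensors. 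Since decomposable tensors generate $\funcvec(\xi_1) \otimes \funcvec(\xi_2)$ as a $\cdfr(M)$-module and both $\funcbil(s_1 \otimes s_2)$ and $\funcbil(s_1) \otimes \funcbil(s_2)$ are $\cdfr(M)$-bilinear, the two forms agree on the whole module, so the two bilinear spaces coincide.

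The computations themselves are routine; the only point requiring care is that the module identifications used above are the canonical ones of Proposition \ref{funcvecbun_plus} --- in particular that the tensor product of sections corresponds to the fibrewise tensor product --- and that well-definedness of $\funcbil(s_1) \otimes \funcbil(s_2)$ on the (projective, but in general non-free) module $\funcvec(\xi_1) \otimes \funcvec(\xi_2)$ is already built into the definition of the tensor product of bilinear spaces over a commutative ring, so no additional verification is needed. I therefore expect no genuine obstacle beyond this bookkeeping.
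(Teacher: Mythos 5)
Your argument is correct, and since the paper itself omits the proof of this proposition (it says only ``We omit the proof''), your write-up simply supplies the routine verification the author clearly had in mind: reduce the underlying modules via Proposition \ref{funcvecbun_plus} and check the forms pointwise, using bilinearity and the fact that decomposable tensors generate $\funcvec(\xi_1)\otimes\funcvec(\xi_2)$ to handle the tensor case. No gaps; the only point deserving the care you already gave it is that the identification of $\funcvec(\xi_1\otimes\xi_2)$ with $\funcvec(\xi_1)\otimes_{\cdfr(M)}\funcvec(\xi_2)$ is the canonical fibrewise one.
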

\begin{proof}
We omit the proof.
\end{proof}

\section{Homotopy theorems for definable $C^r$ vector bundles and bilinear spaces}\label{sec:homotopy}
In this section, we show the homotopy theorems introduced in Section \ref{sec:intro}.
We first show the homotopy theorem for definable $C^r$ vector bundles.
It can be shown following the standard argument in \cite{H} using the following three lemmas:

\begin{lem}\label{lem:unit2}
Let $M$ be a definable $C^r$ manifold, where $r$ is a nonnegative integer. 
Let $U$ and $V$ be definable open subsets of $M$ with $\overline{U} \subset V$.
The maps $\pi_1$ and $\pi_2$ are the projections of $M \times [0,1]$ onto the first and the second components, respectively.
Then, there exists a definable $C^r$ map $\varphi: M \times [0,1] \rightarrow M \times [0,1]$ satisfying the following conditions: 
\begin{itemize}
\item The restriction of $\varphi$ to $V^{c} \times [0,1]$ is the identity map. 
\item The restriction of the composition $\pi_2 \circ \varphi$ to $U \times [0,1]$ is constantly one. 
\item The equality $\pi_1 \circ \varphi(x,t)=x$ is satisfied for any $x \in M$. 
\end{itemize}
\end{lem}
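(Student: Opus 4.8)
The plan is to build $\varphi$ by hand as a fiberwise reparametrization of the interval $[0,1]$, using a partition-of-unity-type function to interpolate between "push everything up to $1$" on $U$ and "do nothing" off $V$. First I would invoke Lemma~\ref{lem:sep} (or directly Lemma~\ref{lem:middle} together with Lemma~\ref{lem:sep}) to produce a definable $C^r$ function $\lambda:M\rightarrow[0,1]$ with $\lambda\equiv 1$ on $\overline{U}$ and $\lambda\equiv 0$ on $V^c$; such a $\lambda$ exists because $\overline{U}$ and $V^c$ are disjoint definable closed subsets of $M$. This $\lambda$ is the "bump" that measures how strongly we deform the fiber over $x$.

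Next I would write down the reparametrization explicitly. The naive choice $\varphi(x,t)=(x,\,\lambda(x)+(1-\lambda(x))t)$ sends $t\mapsto t$ when $\lambda(x)=0$ and $t\mapsto 1$ when $\lambda(x)=1$, is definable and $C^r$ jointly in $(x,t)$, fixes the first coordinate, and maps $M\times[0,1]$ into itself since $\lambda(x)+(1-\lambda(x))t$ is a convex combination of $\lambda(x)\in[0,1]$ and $t\in[0,1]$. One then checks the three bulleted conditions directly: on $V^c\times[0,1]$ we have $\lambda(x)=0$, so $\varphi(x,t)=(x,t)$; on $U\times[0,1]$ we have $\lambda(x)=1$, so $\pi_2\circ\varphi(x,t)=1$; and $\pi_1\circ\varphi(x,t)=x$ by construction. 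All verifications are routine; the only genuine content is the existence of the separating function $\lambda$, which is already packaged in Lemma~\ref{lem:sep}.

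I expect no serious obstacle here. The one point requiring a moment's care is making sure $\lambda$ is genuinely $C^r$ and not merely continuous — but Lemma~\ref{lem:sep} delivers a definable $C^r$ function with the prescribed zero set and value-$1$ set, so composing appropriately (or using $\lambda=1-f$ with $f$ as in that lemma, after swapping the roles of the two closed sets) gives what is needed, and since $0\le r<\infty$ is assumed throughout these preliminary lemmas, this is exactly the regime in which Lemma~\ref{lem:sep} applies. If one instead wants the deformation $\varphi$ to be "the identity in the $t$-direction near $t=0$" for later convenience, one could precompose $t$ with a fixed definable $C^r$ self-map of $[0,1]$ flattening it near the endpoints, but this refinement is not demanded by the statement, so the convex-combination formula above suffices.
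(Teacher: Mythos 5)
Your proposal is correct and is essentially the paper's own argument: the paper takes $f,g$ with $f^{-1}(0)=\overline{U}$, $g^{-1}(0)=V^c$ via Lemma \ref{lem:zeroset} and sets $\varphi(x,t)=\bigl(x,\tfrac{f^2}{f^2+g^2}t+\tfrac{g^2}{f^2+g^2}\bigr)$, which is exactly your convex-combination formula with $\lambda=\tfrac{g^2}{f^2+g^2}$, i.e.\ the separating function of Lemma \ref{lem:sep} with the two closed sets swapped. No gaps.
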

\begin{proof}
Since $\overline{U}$ and $V^{c}$ are  closed definable sets, there exists definable $C^r$ functions $f,g: M \rightarrow \mathbb R$ with $f^{-1}(0)=\overline{U}$ and $g^{-1}(0)=V^{c}$ by Lemma \ref{lem:zeroset}. 
The map $\varphi: M \times [0,1] \rightarrow M \times [0,1]$ defined by
\begin{equation*}
\varphi(x,t) = \left(x ,\frac{f^2}{f^2+g^2}t + \frac{g^2}{f^2+g^2}\right)
\end{equation*}
satisfies the requirements.
\end{proof}

The proofs of the following lemmas are almost the same as \cite[Section 3.4]{H}.

\begin{lem}\label{lem:vecbun1}
Let $U$ be a definable open subset of a definable $C^r$ manifold, and $W$ be a definable open subset of $U \times \mathbb R$ containing $U \times [0,1]$.
Let $\varphi_1,\varphi_2,\varphi_3: U \rightarrow [0,1]$ be definable $C^r$ functions with $\varphi_1 < \varphi_2 < \varphi_3$ on $U$.
We consider a definable $C^r$ vector bundle $\xi=(E,p,W)$ over $W$.
Define definable sets $B_1$ and $B_2$ as follows:
\begin{align*}
B_1 &= \{(x,t) \in U \times [0,1]\;|\; \varphi_1(x) \leq t \leq \varphi_2(x) \} \text{ and}\\ 
B_2 &= \{(x,t) \in U \times [0,1]\;|\; \varphi_2(x) \leq t \leq \varphi_3(x) \} \text{.}
\end{align*} 
For $i=1,2$, let $U_i$ be a definable open subset of $W$ such that $B_i \subset U_i$ and the restriction $\xi|_{U_i}$ of $\xi$ to $U_i$ is definably $C^r$ isomorphic to a trivial bundle over $U_i$.
Then, there exists a definable open subset $W'$ of $W$ such that $B_1 \cup B_2 \subset W'$ and the restriction $\xi|_{W'}$ of $\xi$ to $W'$ is definably $C^r$ isomorphic to a trivial bundle over $W'$.
\end{lem}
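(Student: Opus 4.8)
The plan is to follow the standard argument of \cite{H}, but to take extra care so that the set $W'$ produced is genuinely \emph{open}. First I would reduce to the case that $\xi$ has constant rank $d$ (otherwise treat each connected component of $W$ that meets $B_1\cup B_2$ separately) and fix definable $C^r$ trivializations $h_i\colon\xi|_{U_i}\to\epsilon_{U_i}^d$ for $i=1,2$. Put $S=\{(x,t)\in W\mid t=\varphi_2(x)\}$, the graph of $\varphi_2$. Since $0\le\varphi_1<\varphi_2<\varphi_3\le1$ on $U$, one checks that $B_1\cup B_2=\{(x,t)\in U\times\mathbb R\mid\varphi_1(x)\le t\le\varphi_3(x)\}$ and that $S=B_1\cap B_2$; hence $B_1$, $B_2$, $S$ and $B_1\cup B_2$ are definable closed subsets of $W$ and $S\subset U_1\cap U_2$.

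Next I would match $h_1$ and $h_2$ along the seam $S$. Over $U_1\cap U_2$ the composite $h_1\circ h_2^{-1}$ has the form $(x,t,v)\mapsto(x,t,g(x,t)v)$ for a definable $C^r$ map $g\colon U_1\cap U_2\to\gl(d,\mathbb R)$ (cf.\ Remark \ref{rem:vector_bundle}). As $S$ is the graph of $\varphi_2$ and $S\subset U_1\cap U_2$, the formula $G(x,t)=g(x,\varphi_2(x))$ defines a definable $C^r$ map $G\colon W\to\gl(d,\mathbb R)$, and replacing $h_2$ by the trivialization $h_2'$ obtained from $h_2$ by applying $G$ fibrewise yields a new transition $\psi\colon U_1\cap U_2\to\gl(d,\mathbb R)$ with $\psi\equiv I$ on $S$. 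This is as far as the naive ``glue along the slice $S$'' argument reaches, and it does not by itself give a trivialization over an open neighbourhood of $S$. To get past this I would linearize near $S$: on the definable open neighbourhood $N=\{(x,t)\in U_1\cap U_2\mid\|\psi(x,t)-I\|<\tfrac12\}$ of $S$ the matrix logarithm $L=\log\psi$ is a well-defined definable $C^r$ map into $M_d(\mathbb R)$ with $L\equiv0$ on $S$. Using Lemmas \ref{lem:middle} and \ref{lem:sep} I would choose definable open sets $S\subset N'\subset\overline{N'}\subset N''\subset\overline{N''}\subset N$ in $U_2$ together with a definable $C^r$ function $\mu\colon U_2\to[0,1]$ such that $\mu\equiv1$ on $N'$ and $\{\mu\neq0\}\subset N''$. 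Then $\widetilde L$, defined as $\mu L$ on $N$ and as $0$ on $U_2\setminus\overline{N''}$, is a definable $C^r$ map $U_2\to M_d(\mathbb R)$, so $\widetilde\psi=\exp\widetilde L\colon U_2\to\gl(d,\mathbb R)$ is a definable $C^r$ map equal to $\psi$ on $N'$. Replacing $h_2'$ by the trivialization $h_2''$ obtained by applying $\widetilde\psi$ fibrewise, we obtain definable $C^r$ trivializations $h_1$ of $\xi|_{U_1}$ and $h_2''$ of $\xi|_{U_2}$ that \emph{coincide on the open set $N'\supset S$}.

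Finally I would assemble $W'$. Using Lemma \ref{lem:middle}, choose definable open sets $V_i$ with $B_i\subset V_i\subset\overline{V_i}\subset U_i$, and set
\[
W_1'=\bigl(V_1\cap\{(x,t)\in W\mid t<\varphi_2(x)\}\bigr)\cup N',\qquad W_2'=\bigl(V_2\cap\{(x,t)\in W\mid t>\varphi_2(x)\}\bigr)\cup N',
\]
and $W'=W_1'\cup W_2'$. Then $W'$ is a definable open subset of $W$ with $B_1\cup B_2\subset W'$, one has $W_i'\subset U_i$, and a direct computation gives $W_1'\cap W_2'=N'$; since $h_1$ and $h_2''$ agree on $N'$, the trivialization $h_1$ over $W_1'$ and the trivialization $h_2''$ over $W_2'$ glue to a definable $C^r$ isomorphism $\xi|_{W'}\to\epsilon_{W'}^d$, as required. (Alternatively, once $h_1$ and $h_2'$ agree on $S$ one could glue only in the $C^0$ category and then apply Corollary \ref{cor:bundle_iso}; but the construction above is already of class $C^r$, uniformly in $r$.)

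I expect the only real difficulty to be this passage from the slice-level identity $\psi|_S=I$ to a genuine identity on an open neighbourhood of $S$. In \cite{H} one glues along the single hypersurface $S$ because the conclusion there concerns the non-open set $X\times I$; here $W'$ must be open, so the transition function has to be straightened out near $S$ in a way compatible with both definability and $C^r$ regularity, and the matrix exponential and logarithm together with a definable $C^r$ partition of unity do exactly this.
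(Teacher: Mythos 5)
Your overall strategy---straighten the transition function so that the two trivializations literally agree on an open neighbourhood of the seam $S$, and then glue $C^r$ trivializations over open pieces---is sound, and your bookkeeping with $V_i$, $N'$, $N''$, $W_1'$, $W_2'$ is correct. The one step that genuinely fails is the use of the matrix logarithm and exponential. The paper works over an arbitrary o-minimal structure on the real field; in such a structure (for instance the purely semialgebraic one) the exponential function is not definable, not even after restriction to a bounded set, so neither $L=\log\psi$ nor $\widetilde{\psi}=\exp\widetilde{L}$ is a definable map, and the assertion that they are definable $C^r$ is precisely where the argument breaks. The repair is easy and stays inside your framework: on $N=\{(x,t)\in U_1\cap U_2\mid\|\psi(x,t)-I\|<\tfrac12\}$ (operator norm) replace $\exp(\mu\log\psi)$ by the linear interpolation, i.e.\ define $\widetilde{\psi}$ to be $I+\mu\cdot(\psi-I)$ on $N$ and $I$ on $U_2\setminus\overline{N''}$. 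Then $\|\widetilde{\psi}-I\|\leq\|\psi-I\|<1$, so $\widetilde{\psi}$ takes values in $\gl(d,\mathbb R)$, it is definable of class $C^r$, it equals $\psi$ on $N'$ and $I$ off $N''$, and the rest of your construction goes through verbatim.

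For comparison, the paper's proof is essentially the route you relegate to your closing parenthesis. It sets $W'=\{(x,t)\in U_1\mid t\leq\varphi_2(x)\}\cup\{(x,t)\in U_2\mid t\geq\varphi_2(x)\}$, which is open because every seam point lies in $B_1\cap B_2\subset U_1\cap U_2$, and glues $u_1$ with $u_2$ twisted by the $t$-independent matrix $g(x)$ given by the value of the transition at $(x,\varphi_2(x))$. The glued map is only a definable $C^0$ isomorphism onto the trivial bundle (it need not be $C^r$ across the seam), and the $C^r$ conclusion is then obtained from Corollary \ref{cor:bundle_iso}, i.e.\ from the approximation theorem. So your remark that the naive seam-matching ``does not give a trivialization over an open neighbourhood of $S$'' is not quite the right diagnosis: it does give one over an open set, but only of class $C^0$. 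The paper pays for the upgrade with approximation, whereas your corrected construction produces a $C^r$ trivialization directly; both are legitimate, and yours avoids invoking Theorem \ref{thm:appro_section} at this point.
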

\begin{proof}
Set $W'=\{(x,t) \in U_1\;|\; t \leq \varphi_2(x)\} \cup \{(x,t) \in U_2\;|\;  t \geq \varphi_2(x)\}$.
We have only to show that $\xi|_{W'}$ is definably $C^0$ isomorphic to a trivial bundle by Corollary \ref{cor:bundle_iso}.
Let $\xi|_{U_i}=(E_i,p_i,U_i)$ and $u_i:U_i \times \mathbb R^d \rightarrow E_i$ be a definable $C^r$ isomorphism.
Set $B_3 = B_1 \cap B_2$ and let $v_i$ be the restriction of $u_i$ to $B_3 \times \mathbb R^d$.
Then, $h=v_2 \circ v_1^{-1}: B_3 \times \mathbb R^d \rightarrow B_3 \times \mathbb R^d$ is a definable $C^r$ isomorphism.
There exists a definable $C^r$ map $g:U \rightarrow \gl(d ,\mathbb R)$ with $h(x,v)=(x,g(x)v)$.
Define a definable continuous map $u:W' \times \mathbb R^d \rightarrow p^{-1}(W')$ by
\begin{equation*}
u((x,t),v)=\left\{\begin{array}{ll}
u_1((x,t),v) & \text{ if } t \leq \varphi_2(x) \text{ and }\\
u_2((x,t),g(x)v) & \text{ if } t > \varphi_2(x) \text{.}
\end{array}\right.
\end{equation*}
The map $u$ is a definable continuous isomorphism between the restriction $\xi|_{W'}$ of $\xi$ to $W'$ and a trivial vector bundle.
\end{proof}

\begin{lem}\label{lem:vecbun2}
Let $M$ be a definable $C^r$ manifold and $W$ be a definable open subset of $M \times \mathbb R$ containing $M \times [0,1]$.
We consider a definable $C^r$ vector bundle $\xi=(E,p,W)$ over $W$.
There exist a finite definable open covering $\{U_i\}_{i=1}^q$ of $M$ and a definable open subset $V_i$ of $U_i \times \mathbb R$ containing $U_i \times [0,1]$ such that the restriction $\xi|_{V_i}$ of $\xi$ to $V_i$ is definably $C^r$ isomorphic to a trivial bundle. 
\end{lem}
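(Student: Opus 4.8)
The plan is to reduce, over suitable pieces of $M$, to the one‑variable situation handled by Lemma \ref{lem:cover01}, and then to assemble each $V_i$ by successively gluing local trivializations over consecutive strips by means of Lemma \ref{lem:vecbun1}. First I would fix a definable $C^r$ atlas $(O_j,\phi_j)_{j=1}^p$ of $\xi$, so that each $O_j$ is a definable open subset of $W$ (hence of $M \times \mathbb R$) and $\xi|_{O_j}$ is definably $C^r$ isomorphic to a trivial bundle. Since $\{O_j\}_{j=1}^p$ covers $W \supseteq M \times [0,1]$, the relatively open sets $O_j \cap (M \times [0,1])$ form a finite definable open covering of $M \times [0,1]$. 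Viewing $M$ as a definable subset of some $\mathbb R^n$, Lemma \ref{lem:cover01} then yields a finite definable open covering $\{U_i\}_{i=1}^q$ of $M$ and, for each $i$, definable $C^r$ functions $0 = \varphi_{i,0} < \cdots < \varphi_{i,r_i} = 1$ on $U_i$ such that, for every $1 \leq k \leq r_i$, the slab
\[
S_{i,k} = \{(x,t) \in U_i \times [0,1] \mid \varphi_{i,k-1}(x) \leq t \leq \varphi_{i,k}(x)\}
\]
is contained in $O_{j(i,k)}$ for some index $j(i,k)$. In particular $\xi$ is definably $C^r$ trivial over the definable open set $O_{j(i,k)} \cap (U_i \times \mathbb R)$, which is a neighbourhood of $S_{i,k}$ in $U_i \times \mathbb R$.

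Next I would fix $i$ and set $\widetilde W_i = W \cap (U_i \times \mathbb R)$, a definable open subset of $U_i \times \mathbb R$ containing $U_i \times [0,1]$. I claim, by induction on $m$ with $1 \leq m \leq r_i$, that there is a definable open subset $W_{i,m}$ of $\widetilde W_i$ containing $\{(x,t) \in U_i \times [0,1] \mid 0 \leq t \leq \varphi_{i,m}(x)\}$ such that $\xi|_{W_{i,m}}$ is definably $C^r$ isomorphic to a trivial bundle. For $m = 1$ one takes $W_{i,1} = O_{j(i,1)} \cap (U_i \times \mathbb R)$, which contains $S_{i,1} = \{0 \leq t \leq \varphi_{i,1}\}$. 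For the inductive step, I would apply Lemma \ref{lem:vecbun1} over $\widetilde W_i$ with the three strictly increasing functions $\varphi_1 = \varphi_{i,0}$, $\varphi_2 = \varphi_{i,m}$, $\varphi_3 = \varphi_{i,m+1}$ (strict increase holds since $m \geq 1$): the set $B_1$ of that lemma is $\{(x,t)\in U_i\times[0,1]\mid 0 \leq t \leq \varphi_{i,m}(x)\}$, which lies in $W_{i,m}$ by the inductive hypothesis, while $B_2 = S_{i,m+1}$ lies in $O_{j(i,m+1)} \cap (U_i \times \mathbb R)$; $\xi$ is definably $C^r$ trivial over both of these open sets, so Lemma \ref{lem:vecbun1} provides the required $W_{i,m+1} \subseteq \widetilde W_i$.

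Taking $m = r_i$ and $V_i = W_{i,r_i}$ finishes the argument, because $\{(x,t) \in U_i \times [0,1] \mid 0 \leq t \leq \varphi_{i,r_i}(x)\} = U_i \times [0,1]$ (as $\varphi_{i,r_i} \equiv 1$), and the covering property of $\{U_i\}_{i=1}^q$ is already guaranteed by Lemma \ref{lem:cover01}. The proof is mostly bookkeeping once Lemmas \ref{lem:cover01} and \ref{lem:vecbun1} are available; the only point needing a little care is that the union of the first $m$ strips is again of the form $\{0 \leq t \leq \varphi_{i,m}(x)\}$ required for $B_1$ in Lemma \ref{lem:vecbun1}, which works because consecutive strips meet exactly along the graph $t = \varphi_{i,m}(x)$. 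The genuinely substantial input — converting the homotopy variable on all of $M \times [0,1]$ into finitely many graph‑bounded strips over a refined base covering — is packaged in Lemma \ref{lem:cover01}, so no further difficulty arises here.
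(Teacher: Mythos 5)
Your argument is correct and follows essentially the same route as the paper: fix a definable $C^r$ atlas, apply Lemma \ref{lem:cover01} to obtain the refined covering $\{U_i\}$ of $M$ together with the graph-bounded strips, and then glue the local trivializations over consecutive strips by repeated use of Lemma \ref{lem:vecbun1}. The paper's proof is a terser version of exactly this induction, so no further comment is needed.
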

\begin{proof}
Let $(W_i,\psi_i)_{i=1}^p$ be a definable $C^r$ atlas of $\xi$.
By definition, the restriction $\xi|_{W_i}$ is definably $C^r$ isomorphic to a trivial bundle for any $1 \leq i \leq p$.
By Lemma \ref{lem:cover01}, there exists a finite definable open covering $\{U_i\}_{i=1}^q$ of $M$ and definable $C^r$ functions $0=\varphi_{i,0} < \cdots < \varphi_{i,k} < \cdots < \varphi_{i,r_i}=1$ on $U_i$ such that, for any $1 \leq i \leq q$ and  $1 \leq k \leq r_i$, the definable set
\begin{equation*}
B_{i,k}=\{(x,t) \in U_i \times [0,1] \;|\; \varphi_{i,k-1}(x) \leq t \leq \varphi_{i,k}(x)\}
\end{equation*}
is contained in $W_j$ for some $1 \leq j \leq p$.
In particular, for any $1 \leq i \leq q$ and  $1 \leq k \leq r_i$, there exists a definable open subset $U_{i,k}$ of $U \cap (U_i \times \mathbb R)$ such that $B_{i,k} \subset U_{i,k}$ and the restriction $\xi|_{U_{i,k}}$ is definably $C^r$ isomorphic to a trivial bundle.
Apply Lemma \ref{lem:vecbun1}, then, for any $1 \leq i \leq q$, there exists a definable open subset $V_i$ of $U_i \times \mathbb R$ such that it contains $U_i  \times [0,1]$ and $\xi|_{V_i}$ is definably $C^r$ isomorphic to a trivial bundle.
\end{proof}

We are now ready to show the homotopy theorem for definable $C^r$ vector bundles.

\begin{thm}[Homotopy theorem for definable $C^r$ vector bundles]\label{thm:homvec}
Consider a definable $C^r$ manifold $M$, where $r$ is a nonnegative integer. 
Let $U$ be a definable open subset of $M \times \mathbb R$ containing $M \times [0,1]$.
Define the definable $C^r$ map $r:U \rightarrow U$ by $r(x,t)=(x,1)$.
Let $\xi=(E,p,U)$ be a definable $C^r$ vector bundle over $U$.
Then, shrinking $U$ if necessary, two definable $C^r$ vector bundles $\xi$ and $r^*(\xi)$ are definably $C^r$ isomorphic.
\end{thm}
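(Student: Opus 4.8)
The plan is to run the classical proof of homotopy invariance of vector bundles (as in \cite[Section 3.4]{H}) in the definable $C^r$ category. Note first that $r^*(\xi)$ is a definable $C^r$ vector bundle by Proposition \ref{prop:hometc}(i). It then suffices to construct a definable $C^r$ bundle morphism $\Phi\colon \xi \to \xi$ covering $r$ whose restriction to each fibre $\xi_{(x,t)} \to \xi_{(x,1)}$ is a linear isomorphism: such a $\Phi$ yields the definable $C^r$ $U$-isomorphism $\xi \to r^*(\xi)$ given by $e \mapsto (p(e),\Phi(e))$.

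First I would apply Lemma \ref{lem:vecbun2} to obtain a finite definable open covering $\{U_i\}_{i=1}^q$ of $M$ and definable open sets $V_i$ with $U_i \times [0,1] \subseteq V_i \subseteq (U_i \times \mathbb R) \cap U$ over which $\xi$ is definably $C^r$ trivial. Using Lemma \ref{lem:covering} I would pass to a definable open refinement $\{W_i\}_{i=1}^q$ with $\overline{W_i} \subseteq U_i$ still covering $M$, and by Lemma \ref{lem:unity} fix a definable $C^r$ partition of unity $\{\lambda_i\}_{i=1}^q$ with $\overline{\{\lambda_i > 0\}} \subseteq W_i$. Then I would shrink $U$ twice: first to $U \cap \bigcap_{j}\bigl(V_j \cup ((M \setminus \overline{W_j}) \times \mathbb R)\bigr)$, so that over $\overline{W_j}$ the shrunk set lies inside $V_j$; then to the definable open subset of those $(x,t)$ for which the whole segment $\{x\} \times [\min(t,1),\max(t,1)]$ stays inside. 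Both shrinkings retain $M \times [0,1]$, and after the second one $U$ is ``star-shaped toward $t = 1$'' in each fibre. Rename this set $U$.

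Now set $\mu_j = \lambda_{j+1} + \cdots + \lambda_q$ (so $\mu_0 \equiv 1$, $\mu_q \equiv 0$, and $\mu_{j-1} - \mu_j = \lambda_j$) and $p_j(x,t) = 1 + \mu_j(x)(t-1)$; these are definable $C^r$ functions with $p_0(x,t) = t$, $p_q(x,t) = 1$, and each $p_j(x,t)$ lying between $t$ and $1$. For $v \in \xi_{(x,t)}$ I would define $\Phi(v)$ as the composite of the $q$ maps $\xi_{(x,p_{j-1}(x,t))} \to \xi_{(x,p_j(x,t))}$, $j = 1,\dots,q$, where the $j$-th map is the identity when $\lambda_j(x) = 0$ (then $p_{j-1} = p_j$) and otherwise is the canonical linear isomorphism determined by the chosen trivialization of $\xi|_{V_j}$ — legitimate because, when $x \in W_j$, both points $(x,p_{j-1}(x,t))$ and $(x,p_j(x,t))$ lie on the fibre-segment from $(x,t)$ to $(x,1)$, hence inside $V_j$ by the way $U$ was shrunk. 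The associated base maps $(x,t) \mapsto (x, p_j(x,t))$ are precisely of the kind furnished by Lemma \ref{lem:unit2}. I would then verify that $\Phi$ is a well-defined definable $C^r$ morphism covering $r$: over the open set where $\lambda_j \equiv 0$ the $j$-th factor is the identity, over the open set $W_j \times \mathbb R$ it is the $C^r$ expression coming from the trivialization, and the two agree on their overlap, so each factor — hence the composite — is definable and of class $C^r$; fibrewise $\Phi$ is a composite of linear isomorphisms. This produces the desired isomorphism $\xi \cong r^*(\xi)$.

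I expect the only real work to be bookkeeping in the shrinking step: one must cut $U$ down just enough that, over the support of each $\lambda_j$, the entire fibre-segment joining $(x,t)$ to $(x,1)$ stays inside the trivializing set $V_j$ (so that the partial transports make sense), while still retaining $M \times [0,1]$; and one must check $C^r$-smoothness of the $q$-fold composite at points where some $\lambda_j$ passes through $0$. Openness and definability of the shrunk sets follow from a compactness argument on the fibre-segments together with the refinement lemmas of Section \ref{sec:preparation}, so there is no conceptual obstacle beyond what is already packaged in Lemmas \ref{lem:unit2}, \ref{lem:vecbun1} and \ref{lem:vecbun2}.
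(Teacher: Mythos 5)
Your proposal is correct and follows essentially the same route as the paper's proof: trivialize $\xi$ over strips $V_i \supseteq U_i \times [0,1]$ via Lemma \ref{lem:vecbun2}, refine the covering, push $t$ toward $1$ one chart at a time, and transport fibres through the local trivializations, gluing with the identity off the support of each step. The only (cosmetic) difference is that you encode the intermediate stages by the partial sums $\mu_j$ of a partition of unity, i.e.\ $p_j(x,t)=1+\mu_j(x)(t-1)$, whereas the paper composes the individual maps $r_i$ supplied by Lemma \ref{lem:unit2}; your explicit two-step shrinking of $U$ to keep the fibre segments inside the trivializing sets is in fact slightly more careful than the paper's ``shrinking $U$ if necessary.''
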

\begin{proof}
We construct a definable $C^r$ isomorphism $\Tilde{u}: \xi \rightarrow r^*(\xi)$.
There exist a finite definable open covering $\{U_i\}_{i=1}^q$ and a definable open subset $V_i$ of $U_i \times \mathbb R$ containing $U_i \times [0,1]$ such that the restriction $\xi|_{V_i}$ of $\xi$ to $V_i$ is definably $C^r$ isomorphic to a trivial bundle by Lemma \ref{lem:vecbun2}.
Let $h_i:V_i \times \mathbb R^d \rightarrow p^{-1}(V_i)$ be a definable $C^r$ isomorphism for any $1 \leq i \leq q$.
Shrinking $U$ if necessary, we may assume that $U=\bigcup_{i=1}^q V_i$.
There exist definable open sets $U_i'$ such that $\overline{U_i'} \subset U_i$ and $\{U_i'\}_{i=1}^q$ is again a finite definable open covering of $M$ by Lemma \ref{lem:covering}. 
There exists a definable $C^r$ map $r_i: M \times [0,1] \rightarrow M \times [0,1]$ such that 
the restriction of $r_i$ to $U_i^{c} \times [0,1]$ is the identity map, the restriction of the composition $\pi_2 \circ r_i$ to $U_i' \times [0,1]$ is constantly one, and $\pi_1 \circ r_i(x,t)=x$ for any $x \in M$ by Lemma \ref{lem:unit2}. 
Here, $\pi_1: M \times \mathbb R \rightarrow M$ and $\pi_2: M \times \mathbb R \rightarrow \mathbb R$ are the projections.
We may assume that $r_i$ is a definable $C^r$ map defined on $U$, shrinking $U$ if necessary.
We have $r=r_q \circ \cdots \circ r_2 \circ r_1$.

Set $s_0 = \operatorname{id}$ and $s_i = r_i \circ \cdots \circ r_2 \circ r_1$ for $i>0$.
Set $s_i^*\xi=(s_i^*E,p_i,U)$.
Consider the map $h_{i,j}: V_j \times \mathbb R^d \rightarrow s_i^*E|_{V_j}$ given by
\begin{equation*}
h_{i,j}((x,t),w)=((x,t),h_j(s_i(x,t),w))
\end{equation*}
for any $(x,t) \in V_j$ and $w \in \mathbb R^d$.
It is a definable $C^r$ diffeomorphism by the definition of the induced vector bundle.
Let $u_i:s_{i-1}^*\xi \rightarrow s_{i-1}^*\xi$ be the definable $C^r$ morphism such that it is defined by $u_i(h_{i-1,i}((x,t),v)) = h_{i-1,i}(r_i(x,t),v)$ for any $((x,t),v) \in V_i \times \mathbb R^d$  and $u_i$ is the identity map off the set $p_{i-1}^{-1}(V_i)$.
Consider the definable $C^r$ $U$-morphism $\Tilde{u_i}:s_{i-1}^*\xi \rightarrow s_i^*\xi$ given by $\Tilde{u_i}(v)=(p_{i-1}(v),u_i(v))$.
We show that $\Tilde{u_i}$ is a definable $C^r$ $U$-isomorphism.
It is clear that $\Tilde{u_i}$ is a bijective definable $C^r$ $U$-morphism.
We have only to show that $\Tilde{u_i}$ is a $C^r$ diffeomorphism.
Let $(x,t) \in U$ be fixed.
There exists $1 \leq j \leq q$ with $(x,t) \in V_j$.
We have the following commutative diagram:
\begin{equation*}
\begin{diagram}
\node{V_j \times \mathbb R^d} \arrow{s,r}{\operatorname{id}_{V_j} \times (u_i \circ h_{i-1,j})} \arrow{e,t}{h_{i-1,j}} \node{p_{i-1}^{-1}(V_j)} \arrow{s,r} {\Tilde{u_i}|_{p_{i-1}^{-1}(V_j)}}\\
\node{\{((x,t),v) \in V_j \times s_{i-1}^*E\;|\;r_i(x,t)=p_{i-1}(v)\}} \arrow{e,t} {\simeq}  \node{p_{i}^{-1}(V_j)} 
\end{diagram}
\end{equation*}
The definable $C^r$ map $\operatorname{id}_{V_j} \times (u_i \circ h_{i-1,j})$ is a $C^r$ diffeomorphism. Hence, $\Tilde{u_i}$ is also a $C^r$ diffeomorphism.

Set $\Tilde{u}=\Tilde{u_q} \circ \cdots \circ \Tilde{u_2} \circ \Tilde{u_1}$.
The $U$-morphism $\Tilde{u}$ is a definable $C^r$ $U$-isomorphism between $\xi$ and $r^*(\xi)$.
\end{proof}

The following corollary is Main Theorem \ref{thm:main30}.
\begin{cor}\label{thm:main30_0}
Consider a definable $C^r$ manifold $M$, where $r$ is a nonnegative integer. 
Let $U$ be a definable open subset of $M \times \mathbb R$ containing $M \times [0,1]$.
Let $\Xi$ be a definable $C^r$ vector bundle over $U$.
Then, two definable $C^r$ vector bundles $\Xi|_{M \times \{0\}}$ and $\Xi|_{M \times \{1\}}$ are definably $C^r$ isomorphic.
\end{cor}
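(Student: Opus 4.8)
The plan is to read this off directly from Theorem~\ref{thm:homvec} together with the functoriality of the pullback of definable $C^r$ vector bundles. Denote by $\iota_0,\iota_1\colon M\to U$ the definable $C^r$ embeddings $\iota_0(x)=(x,0)$ and $\iota_1(x)=(x,1)$; they do land in $U$ since their images lie in $M\times[0,1]\subset U$, and under the canonical identification of $M\times\{j\}$ with $M$ we have $\Xi|_{M\times\{0\}}=\iota_0^*(\Xi)$ and $\Xi|_{M\times\{1\}}=\iota_1^*(\Xi)$. Let $r\colon U\to U$ be the definable $C^r$ map $r(x,t)=(x,1)$ from Theorem~\ref{thm:homvec}. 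The key observation is the relation $r\circ\iota_0=\iota_1$, which by the functoriality of pullbacks (valid up to canonical definable $C^r$ isomorphism, exactly as in the topological setting \cite[Section 4.2]{H}, using Proposition~\ref{prop:hometc}(i)) supplies a definable $C^r$ isomorphism
\[
\iota_0^*\bigl(r^*(\Xi)\bigr)\;\simeq\;(r\circ\iota_0)^*(\Xi)\;=\;\iota_1^*(\Xi).
\]

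Next I would apply Theorem~\ref{thm:homvec} itself: after shrinking $U$ to a smaller definable open set $U'$ that still contains $M\times[0,1]$, there is a definable $C^r$ $U'$-isomorphism $\Xi|_{U'}\simeq r^*(\Xi|_{U'})$. Since $M\times\{0\}$ and $M\times\{1\}$ lie in $M\times[0,1]\subset U'$, neither the passage from $\Xi$ to $\Xi|_{U'}$ nor any of the shrinkings performed inside the proof of Theorem~\ref{thm:homvec} alters the two bundles appearing in the statement. Restricting this $U'$-isomorphism to $M\times\{0\}$ gives $\iota_0^*(\Xi)\simeq\iota_0^*(r^*(\Xi))$, and composing with the isomorphism from the previous paragraph yields
\[
\Xi|_{M\times\{0\}}=\iota_0^*(\Xi)\;\simeq\;\iota_0^*\bigl(r^*(\Xi)\bigr)\;\simeq\;\iota_1^*(\Xi)=\Xi|_{M\times\{1\}},
\]
which is the assertion.

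I do not expect a genuine obstacle: all the substantive work is already contained in Theorem~\ref{thm:homvec}. The only points deserving (routine) attention are verifying that the pullback of definable $C^r$ vector bundles is functorial up to canonical definable $C^r$ isomorphism — immediate from the local description of the induced bundle via transition functions in Remark~\ref{rem:vector_bundle} — and the bookkeeping that every shrinking of $U$ retains $M\times[0,1]$, so that the restrictions to $M\times\{0\}$ and $M\times\{1\}$ are unaffected.
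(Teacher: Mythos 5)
Your proposal is correct and is exactly the argument the paper intends: the paper's proof is just ``Immediate from Theorem \ref{thm:homvec}'', and you have written out the routine details (the relation $r\circ\iota_0=\iota_1$, functoriality of pullbacks, and the fact that every shrinking of $U$ still contains $M\times[0,1]$) that make that immediacy precise. No gaps.
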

\begin{proof}
Immediate from Theorem \ref{thm:homvec}.
\end{proof}

\begin{definition}
Let $f,g:M \rightarrow N$ be definable $C^r$ maps between definable $C^r$ manifolds.
We call them \textit{definably $C^r$ homotopic} if there exists a definable $C^r$ map $H:M \times [0,1] \rightarrow N$ with $H(x,0)=f(x)$ and $H(x,1)=g(x)$ for any $x \in M$.
\end{definition}

\begin{cor}\label{cor:vechomotpoic}
Consider definable $C^r$ manifolds $M$ and $N$, where $r$ is a nonnegative integer. 
Let $f,g:M \rightarrow N$ be definably $C^r$ homotopic definable $C^r$ maps.
Let $\xi$ be a definable $C^r$ vector bundle over $N$.
Then, the induced definable $C^r$ vector bundles $f^*\xi$ and $g^*\xi$ are definably $C^r$ isomorphic. 
\end{cor}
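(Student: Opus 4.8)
The plan is to reduce the statement about $f$ and $g$ to the homotopy theorem already established in Corollary~\ref{thm:main30_0}. Let $H:M \times [0,1] \rightarrow N$ be a definable $C^r$ homotopy with $H(x,0)=f(x)$ and $H(x,1)=g(x)$. The idea is to pull back $\xi$ along $H$ and compare the restrictions of the resulting bundle to the two slices $M \times \{0\}$ and $M \times \{1\}$.

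First I would extend $H$ to a definable open neighbourhood of $M \times [0,1]$ in $M \times \mathbb R$, so that Corollary~\ref{thm:main30_0} is literally applicable. Since $M \times [0,1]$ is closed in $M \times \mathbb R$ and $H$ is a definable $C^r$ map into the affine definable $C^r$ manifold $N \subset \mathbb R^m$, one can apply the definable $C^r$ Tietze-type extension (coordinatewise, using that $N$ is a definable $C^r$ submanifold and a definable $C^r$ tubular-neighbourhood retraction, or more simply by first extending the ambient $\mathbb R^m$-valued map and then composing with a retraction onto $N$) to obtain a definable open set $U \subset M \times \mathbb R$ containing $M \times [0,1]$ and a definable $C^r$ map $\widetilde{H}: U \rightarrow N$ restricting to $H$ on $M \times [0,1]$. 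Then $\Xi := \widetilde{H}^*\xi$ is a definable $C^r$ vector bundle over $U$ by Proposition~\ref{prop:hometc}(i).

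Next I would identify the restrictions. For the inclusion $j_0: M \times \{0\} \hookrightarrow U$ we have $\Xi|_{M \times \{0\}} = j_0^*\widetilde{H}^*\xi = (\widetilde{H} \circ j_0)^*\xi$, and $\widetilde{H} \circ j_0$ is, under the canonical identification $M \times \{0\} \cong M$, exactly the map $f$; hence $\Xi|_{M \times \{0\}} \cong f^*\xi$ as definable $C^r$ vector bundles over $M$. Similarly $\Xi|_{M \times \{1\}} \cong g^*\xi$. By Corollary~\ref{thm:main30_0} applied to $\Xi$ over $U$, the two restrictions $\Xi|_{M \times \{0\}}$ and $\Xi|_{M \times \{1\}}$ are definably $C^r$ isomorphic, and combining the three isomorphisms yields a definable $C^r$ isomorphism $f^*\xi \cong g^*\xi$.

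The main obstacle is the extension step: one needs a genuinely definable $C^r$ extension of $H$ from the closed set $M \times [0,1]$ to an open neighbourhood in $M \times \mathbb R$, with values in $N$. The cleanest route is to first write $N$ as a definable $C^r$ submanifold of some $\mathbb R^m$, extend the $\mathbb R^m$-valued composite $\iota \circ H$ using a definable $C^r$ version of the extension lemma (available since $M \times [0,1]$ is definably closed and the target is a Euclidean space), then shrink the neighbourhood so that the image lands in a definable tubular neighbourhood of $N$ and postcompose with the definable $C^r$ retraction onto $N$. Everything else — functoriality of pullback, the identification of restrictions with $f^*\xi$ and $g^*\xi$, and the invocation of the homotopy theorem — is routine.
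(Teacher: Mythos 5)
Your overall strategy is exactly the one the paper intends: the paper's entire proof of this corollary is ``Immediate from Corollary~\ref{thm:main30_0}'', and your reduction (pull $\xi$ back along an extension of the homotopy to an open neighbourhood $U$ of $M\times[0,1]$, identify the slice restrictions with $f^*\xi$ and $g^*\xi$ by functoriality, then invoke the homotopy theorem) is the correct way to fill that in. The one step I would not let pass as stated is the extension step. You appeal to ``a definable $C^r$ Tietze-type extension'' from the closed set $M\times[0,1]$; for $r=0$ this is standard, but for $r\geq 1$ a general definable $C^r$ Whitney/Tietze extension from a closed definable set is a genuinely delicate matter in o-minimal geometry and is nowhere provided in this paper, so you should not lean on it as a black box. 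Fortunately it is also unnecessary here: you do not need $\widetilde{H}$ to restrict to $H$ on all of $M\times[0,1]$, only to satisfy $\widetilde{H}(x,0)=f(x)$ and $\widetilde{H}(x,1)=g(x)$. Take a definable $C^r$ function $\rho:\mathbb R\rightarrow[0,1]$ with $\rho^{-1}(0)=(-\infty,0]$ and $\rho^{-1}(1)=[1,\infty)$ (Lemma~\ref{lem:sep} applied to these two closed sets), and set $\widetilde{H}(x,t)=H(x,\rho(t))$ on all of $M\times\mathbb R$; since $\rho$ takes values in $[0,1]$ and is flat to order $r$ at $0$ and $1$, the composite is definable and of class $C^r$, and the rest of your argument goes through verbatim with $U=M\times\mathbb R$. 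With that replacement (or with a justification of the extension specific to the product set $M\times[0,1]$), your proof is complete and agrees with the paper's.
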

\begin{proof}
Immediate from Corollary \ref{thm:main30_0}.
\end{proof}

\begin{cor}\label{cor:vechomotpoic_euclid}
Any definable $C^r$ vector bundle over $\mathbb R^n$ is definably $C^r$ isomorphic to a trivial bundle. 
\end{cor}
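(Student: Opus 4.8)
The plan is to exploit the definable $C^r$ contractibility of $\mathbb R^n$ and feed it into Corollary \ref{cor:vechomotpoic}. Let $\xi=(E,p,\mathbb R^n)$ be a definable $C^r$ vector bundle of rank $d$. First I would introduce the two definable $C^r$ maps $f=\operatorname{id}_{\mathbb R^n}:\mathbb R^n\to\mathbb R^n$ and the constant map $g:\mathbb R^n\to\mathbb R^n$, $g(x)=0$. These are definably $C^r$ homotopic: the map $H:\mathbb R^n\times[0,1]\to\mathbb R^n$ given by $H(x,t)=(1-t)x$ is a polynomial, hence definable and of class $C^r$ for every nonnegative integer $r$, and it satisfies $H(x,0)=f(x)$ and $H(x,1)=g(x)$ for all $x\in\mathbb R^n$.

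Next I would apply Corollary \ref{cor:vechomotpoic} to conclude that the induced definable $C^r$ vector bundles $f^*\xi$ and $g^*\xi$ are definably $C^r$ isomorphic. Here $f^*\xi$ is canonically definably $C^r$ isomorphic to $\xi$ itself, since $f$ is the identity. On the other hand, because $g$ is the constant map onto $\{0\}$, the total space of $g^*\xi$ is $\{(x,v)\in\mathbb R^n\times E\;|\;p(v)=g(x)=0\}=\mathbb R^n\times p^{-1}(0)$, and choosing any linear isomorphism $p^{-1}(0)\simeq\mathbb R^d$ identifies $g^*\xi$ with the trivial bundle $\epsilon_{\mathbb R^n}^d$. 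Chaining the isomorphisms gives $\xi\simeq f^*\xi\simeq g^*\xi\simeq\epsilon_{\mathbb R^n}^d$, which is the assertion.

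I do not expect any real obstacle here; the argument is a direct corollary. The only points that deserve a word are that the chosen homotopy $H$ is genuinely definable and $C^r$ (immediate, as it is polynomial) and that the pullback of a vector bundle along a constant map is trivial (immediate from the description of induced bundles used in Proposition \ref{prop:hometc}). Everything else is supplied by Corollary \ref{cor:vechomotpoic}.
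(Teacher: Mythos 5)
Your proposal is correct and follows essentially the same route as the paper: the paper also uses the linear homotopy (there written $H(x,t)=tx$, i.e.\ your homotopy with $t$ reversed) between the identity and the constant map $0$, and then invokes Corollary \ref{cor:vechomotpoic} together with the triviality of the pullback along a constant map. Your extra remarks on why $f^*\xi\simeq\xi$ and why $g^*\xi$ is trivial simply make explicit what the paper leaves implicit.
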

\begin{proof}
Consider the definable $C^r$ function $H: \mathbb R^n \times [0,1] \rightarrow \mathbb R^n$ given by 
\begin{equation*}
H(x_1,\ldots,x_n,t)=(tx_1,\ldots, tx_n)\text{.}
\end{equation*}
It means that the identity map on $\mathbb R^n$ and the constant map given by $c(x_1,\ldots, x_n)=(0,\ldots,0)$ are definably $C^r$ homotopic.
Any definable $C^r$ vector bundle $\xi$ is definably $C^r$ isomorphic to $c^*\xi$, which is a trivial bundle, by Corollary \ref{cor:vechomotpoic}.
\end{proof}

We begin to prove the homotopy theorem for definable $C^r$ bilinear spaces.
Its counterpart for semialgebraic vector bundles is proved in \cite[Chapter 15]{BCR}.
We first show a theorem on Nash vector bundles in order to prove the homotopy theorem for definable $C^r$ bilinear spaces.
The proof is the same as that of \cite[Theorem 15.1.6]{BCR}, which is a theorem on semialgebraic $C^0$ vector bundles.
We give a proof for readers' convenience.
Nash vector bundles are defined in \cite[Section 12.7]{BCR}.
We use the real spectrum $\rspec(\nash(M))$ of the ring $\nash(M)$ of Nash functions on an affine Nash manifold $M$ in the proof of the following theorem.
Real spectrum is defined and its features are investigated in \cite[Chapter 7]{BCR}.

\begin{thm}\label{thm:nash_bundle}
Let $M$ be an affine Nash manifold and $\xi=(E,p,M)$ be a Nash vector bundle of rank $d$.
Let $(U_i,\phi_i:U_i \times \mathbb R^d \rightarrow p^{-1}(U_i))_{i=1, \ldots, q}$ be a Nash atlas of $\xi$.
Let $s$ be a Nash bilinear form over $\xi$.
The families of Nash maps  $\{s_i:U_i \rightarrow \sgl(d,\mathbb R)\}_{i=1, \ldots, q}$ are induced from the Nash bilinear form $s$ defined in the same way as Proposition \ref{prop:bilequiv}.
Then, there exist a finite semialgebraic open covering $\{V_j\}_{i=1}^r$ of $M$, Nash maps $g_j:V_j \rightarrow \gl(d,\mathbb R)$ and nonnegative integers $ 0 \leq r_{+,j} \leq d$ for any $1 \leq j \leq q$ such that $V_j$ is included in some $U_i$ and 
\begin{equation*}
{}^t\!g_j(x)s_i(x)g_j(x) = \left(\begin{array}{cc} I_{r_{+,j}} & O\\ O & -I_{r_{-,j}} \end{array}\right)
\end{equation*}
for any $x \in V_j$, where $r_{-,j} = d - r_{+,j}$ and $I_m$ is the $m \times m$ identity matrix.
\end{thm}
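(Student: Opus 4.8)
The plan is to reduce the assertion to the elementary fact that a nondegenerate symmetric bilinear form over a real closed field can be diagonalized, and then to spread this diagonalization out over the real spectrum $\rspec(\nash(M))$, whose quasi-compactness will produce the desired \emph{finite} semialgebraic open covering.

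First I would fix an index $i$ and pick a point $\alpha$ in the constructible subset $\widetilde{U_i}$ of $\rspec(\nash(M))$ corresponding to the semialgebraic open set $U_i$. The residue field $k(\alpha)$ at $\alpha$ is real closed, and the entries of $s_i$ determine a matrix $s_i(\alpha) \in \sgl(d, k(\alpha))$. Applying the usual completing-the-squares procedure over $k(\alpha)$, one finds a matrix in $\gl(d, k(\alpha))$ carrying $s_i(\alpha)$ to $\operatorname{diag}(I_{r_{+}}, -I_{r_{-}})$ with $r_{-} = d - r_{+}$. The key observation is that this procedure involves only finitely many choices: at each of the $d$ stages one selects, from a fixed finite list of vectors (the $e_k$ together with the $e_k + e_l$), one on which the current form does not vanish, and then records the sign of the resulting pivot. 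Each such choice is governed by a \emph{strict} inequality $f > 0$ for some $f$ built from the entries of $s_i$ (a polynomial in those entries, or a rational expression whose denominator is a product of earlier pivots and hence invertible at $\alpha$), so all the choices made at $\alpha$ cut out a basic open neighbourhood $\widetilde{V}$ of $\alpha$ contained in $\widetilde{U_i}$.

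On the semialgebraic open set $V \subseteq U_i$ corresponding to $\widetilde{V}$ the same computation now proceeds with Nash functions: the successive pivots are nowhere-vanishing Nash functions of constant sign on $V$, so completing the squares gives a Nash change of basis taking $s_i$ to $\operatorname{diag}(a_1, \dots, a_d)$ with each $a_j$ a nowhere-vanishing Nash function of constant sign, and dividing the $j$-th new basis vector by the Nash function $\sqrt{\lvert a_j \rvert}$ yields a Nash map $g : V \to \gl(d, \mathbb{R})$ with ${}^t\!g(x)\, s_i(x)\, g(x) = \operatorname{diag}(I_{r_{+}}, -I_{r_{-}})$ for every $x \in V$, where $r_{+}$ is the number of indices $j$ with $a_j > 0$. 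Letting $\alpha$ and $i$ vary, the basic open sets $\widetilde{V}$ cover $\rspec(\nash(M))$ because $\bigcup_i \widetilde{U_i} = \rspec(\nash(M))$; by quasi-compactness finitely many of them suffice, and the corresponding semialgebraic open sets $V_1, \dots, V_r$ form a finite semialgebraic open covering of $M$, each contained in some $U_i$ and each carrying a Nash map $g_j$ and an integer $r_{+,j}$ with the required property.

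The step that needs the most care is the dictionary between $\rspec(\nash(M))$ and semialgebraic subsets of $M$: one has to use that $\widetilde{U_i}$ is open, that a basic open subset of the real spectrum corresponds to an open semialgebraic subset of the manifold, and that the rational expressions and square roots constructed at $\alpha$ really define Nash functions on the associated semialgebraic set — the last point relying on the elementary fact that the square root of a positive Nash function is Nash. Granting this standard machinery, the argument follows \cite[Theorem 15.1.6]{BCR} verbatim.
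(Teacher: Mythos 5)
Your argument is correct, and while it shares the overall skeleton of the paper's proof (pass to the real closed residue field $k(\alpha)$ at a prime cone $\alpha$, diagonalize there, spread the diagonalization to a neighbourhood, and use quasi-compactness of $\rspec(\nash(M))$ plus the tilde dictionary to extract a finite semialgebraic covering of $M$), the key local step is genuinely different. The paper first applies Sylvester's law over $k(\alpha)$ to get an abstract matrix $h_\alpha \in \gl(d,k(\alpha))$, realizes it as the value at $\alpha$ of a Nash matrix-valued map $h$ on a semialgebraic neighbourhood via \cite[Proposition 8.8.3]{BCR}, and then runs Gram--Schmidt on the columns of $h$ with respect to $s$, the neighbourhood $V_\alpha$ being the locus where the Gram--Schmidt pivots do not vanish (this is the step that follows \cite[Theorem 15.1.6]{BCR} verbatim, rather than your completing-the-squares argument). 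You instead perform the symmetric pivoting algorithm directly on the Nash entries of $s_i$, observing that it branches over only finitely many pivot choices and sign patterns, each branch being cut out by strict sign conditions that are Nash on the corresponding basic open set; this avoids \cite[Proposition 8.8.3]{BCR} entirely and needs only the evaluation/tilde compatibility (note the paper dodges even that subtlety by first reducing to a trivial bundle, so that all entries lie in $\nash(M)$ rather than $\nash(U_i)$). Your route buys something extra: since there are only finitely many pivoting strategies, it essentially proves Corollary \ref{cor:nash_bundle1} (the finite covering of $\sgl(d,\mathbb R)$ with Nash diagonalizers) directly, after which even the real spectrum and quasi-compactness become optional --- one could simply pull that covering back along the maps $s_i$ --- whereas the paper obtains that corollary as a consequence of the theorem. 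The price is that you must be slightly more careful that all the rational expressions appearing in the algorithm have denominators that are products of earlier pivots, hence are Nash and of constant sign on the set you define, which you do address.
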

\begin{proof}
We may assume that $\xi$ is a trivial bundle over $M$ without loss of generality.
Consider an arbitrary prime cone $\alpha \in \rspec(\nash(M))$.
Let $k(\alpha)$ be the real closure of $\nash(M)/\operatorname{supp}(\alpha)$ under the ordering induced from the prime cone $\alpha$.
A nondegenerate symmetric bilinear form $s_{\alpha}:k(\alpha)^d \times k(\alpha)^d \rightarrow k(\alpha)$ is induced from the bilinear form $s$ by \cite[Theorem 8.4.4, Theorem 8.5.2]{BCR}.
We can find $h_{\alpha} \in \gl(d,k(\alpha))$ and $0 \leq r_{+,\alpha} \leq d$ with ${}^t\!h_\alpha s_\alpha h_\alpha = \left(\begin{array}{cc} I_{r_{+,\alpha}} & O\\ O & -I_{r_{-,\alpha}} \end{array}\right)$ by Sylvester's law of inertia, where $r_{-,\alpha}=d-r_{+,\alpha}$.
We identify $\rspec(\nash(M))$ with $\widetilde{M}$ by \cite[Proposition 8.8.1]{BCR}.
There exist an open semialgebraic subset $W_\alpha$ of $M$ and a Nash map $h: W_\alpha \rightarrow \gl(d,\mathbb R)$ with $\alpha \in \widetilde{W_{\alpha}}$ and $h(\alpha)=h_\alpha$ by \cite[Proposition 8.8.3]{BCR}.
The definitions of $\widetilde{M}$, $\widetilde{W_{\alpha}}$ and $h(\alpha)$ are found in \cite[Proposition 7.2.2, Theorem 7.2.3]{BCR}.
Let $w_i: W_\alpha \rightarrow \mathbb R^d$ be the $i$-th columns of $h$ for all $1 \leq i \leq d$.
Apply the Gram-Schmidt orthogonalization process, and we define $u_i$ and $v_i$ as follows:
\begin{align*}
v_1 &= |s_\alpha(w_1,w_1)|^{-1/2}w_1 \text{,}  \\
v_2 &= |s_\alpha(u_2,u_2)|^{-1/2}u_2,  \quad \text{ where }  u_2 = w_2-\dfrac{s_\alpha(w_2,v_1)}{s_\alpha(v_1,v_1)}v_1 \text{,}\\
&\cdots &\\
v_d &= |s_\alpha(u_d,u_d)|^{-1/2}u_d,  \quad \text{ where }  u_d = w_d-\sum_{i=1}^{d-1}\dfrac{s_\alpha(w_d,v_i)}{s_\alpha(v_i,v_i)}v_i \text{.}
\end{align*} 
Consider the semialgebraic open set
\begin{equation*}
 V_\alpha:=\{x \in W_\alpha\;|\; s_\alpha(w_1(x),w_1(x))\not=0, s_\alpha(u_2(x),u_2(x))\not=0 , \ldots, s_\alpha(u_d(x),u_d(x))\not=0\}\text{.}
 \end{equation*}
 The maps $u_i$ and $v_i$ are Nash maps on $V_\alpha$ for all $1 \leq i \leq d$.
 We have $s_\alpha(w_i(\alpha),w_j(\alpha))=\pm \delta_{ij}$ for all $1 \leq i,j \leq d$ by the definition of $w_i$, where $\delta_{ij}$ denotes the Kronecker delta.
We can inductively show that $s_\alpha(u_i(\alpha),u_i(\alpha)) = \pm 1$ in $k(\alpha)$.
Hence, the point $\alpha$ is contained in the set $\widetilde{V_\alpha}$.
Define $g: V_\alpha \rightarrow \gl(d,\mathbb R)$ so that the $i$-th row of $g(x)$ is $v_i(x)$ for any $1 \leq i \leq d$. 
We have ${}^t\!g(x) s(x) g(x) = \left(\begin{array}{cc} I_{r_{+,\alpha}} & O\\ O & -I_{r_{-,\alpha}} \end{array}\right)$ for any $x \in V_\alpha$.

The family $\{\widetilde{V_\alpha}\}_{\alpha \in \rspec(\nash(M))}$ is an open covering of $\rspec(\nash(M))$.
Since $\rspec(\nash(M))$ is compact by \cite[Corollary 7.1.13]{BCR}, a finite subfamily $\{\widetilde{V_j}\}_{j=1}^r$ of $\{\widetilde{V_\alpha}\}_{\alpha \in \rspec(\nash(M))}$ covers $\rspec(\nash(M))$.
The family $\{V_j\}_{j=1}^r$ covers $M$ by \cite[Proposition 7.2.2]{BCR}.
By the definition of $V_j$, there exists a Nash map $g_j:V_j \rightarrow \gl(d,\mathbb R)$ with
$
{}^t\!g_j(x)s(x)g_j(x) = \left(\begin{array}{cc} I_{r_{+,j}} & O\\ O & -I_{r_{-,j}} \end{array}\right)
$
for any $x \in V_j$ and some $0 \leq r_{+,j} \leq d$, where $r_{-,j} = d - r_{+,j}$.
\end{proof}

Theorem \ref{thm:nash_bundle} has several useful corollaries.

\begin{cor}\label{cor:nash_bundle1}
There exist a semialgebraic open covering $\{U_i\}_{i=1}^p$ of $\sgl(d,\mathbb R)$ and Nash maps $g_i:U_i \rightarrow \gl(d,\mathbb R)$ and $0 \leq r_{+,i} \leq d$ such that 
\begin{equation*}
{}^t\!g_i(s)sg_i(s) = \left(\begin{array}{cc} I_{r_{+,i}} & O\\ O & -I_{r_{-,i}} \end{array}\right)
\end{equation*}
for any $s \in U_i$ and $1 \leq i \leq p$, where $r_{-,i} = d - r_{+,i}$.
\end{cor}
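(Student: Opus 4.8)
The plan is to deduce Corollary~\ref{cor:nash_bundle1} as a direct application of Theorem~\ref{thm:nash_bundle} to a single, canonically chosen Nash bilinear space: the tautological one living over $\sgl(d,\mathbb R)$ itself.

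First I would check that $M:=\sgl(d,\mathbb R)$ is an affine Nash manifold. Indeed, it is the semialgebraic open subset of the vector space of symmetric $d\times d$ real matrices, a copy of $\mathbb R^{d(d+1)/2}$, cut out by the non-vanishing of the determinant, and every semialgebraic open subset of a Euclidean space is an affine Nash manifold. Next I would equip the trivial rank $d$ bundle $\epsilon_M^d=(M\times\mathbb R^d,p,M)$, given the one-element Nash atlas consisting of the identity trivialization, with the Nash bilinear form
\[
B\bigl((A,v),(A,w)\bigr)={}^t\!v\,A\,w ,
\]
for $A\in M$ and $v,w\in\mathbb R^d$. The right-hand side is a polynomial in the entries of $A$, $v$ and $w$, hence Nash, and for each fixed $A\in M$ its restriction to the fiber over $A$ is the symmetric bilinear form with matrix $A$, which is nondegenerate because $A$ is invertible. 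So $(\epsilon_M^d,B)$ is a Nash bilinear space over $M$, and the family of Nash maps $\{s_i:U_i\to\sgl(d,\mathbb R)\}$ it induces as in Proposition~\ref{prop:bilequiv} reduces, for this one-chart atlas, to the single map $s_1=\operatorname{id}_M\colon A\mapsto A$.

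Then I would apply Theorem~\ref{thm:nash_bundle} to $(\epsilon_M^d,B)$. It yields a finite semialgebraic open covering $\{V_j\}_{j=1}^{r}$ of $M$ (the requirement that each $V_j$ lie in some chart $U_i$ being vacuous here, since $U_1=M$), Nash maps $g_j:V_j\to\gl(d,\mathbb R)$, and integers $0\le r_{+,j}\le d$ with
\[
{}^t\!g_j(x)\,s_1(x)\,g_j(x)=\begin{pmatrix} I_{r_{+,j}} & O\\ O & -I_{r_{-,j}}\end{pmatrix}
\]
for all $x\in V_j$, where $r_{-,j}=d-r_{+,j}$. Since $s_1(x)=x$, relabelling $V_j$ as $U_j$ and setting $p:=r$ gives precisely the stated conclusion. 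The argument is pure bookkeeping once the right Nash bilinear space is identified; I do not anticipate any real obstacle, the only points meriting a sentence being that $\sgl(d,\mathbb R)$ is genuinely an affine Nash manifold and that $B$ is genuinely a Nash bilinear form in the sense of Proposition~\ref{prop:bilequiv}, both of which are immediate.
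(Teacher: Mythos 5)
Your proposal is correct and is essentially identical to the paper's own proof: the paper likewise applies Theorem \ref{thm:nash_bundle} to the trivial bundle $\epsilon_{\sgl(d,\mathbb R)}^d$ equipped with the tautological Nash bilinear form $b(v,w)={}^t\pi_2(v)\,S\,\pi_2(w)$ with $S=\pi_1(v)=\pi_1(w)$. Your added remarks (that $\sgl(d,\mathbb R)$ is an affine Nash manifold and that the induced map $s_1$ is the identity) are accurate and only make explicit what the paper leaves implicit.
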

\begin{proof}
Consider the trivial bundle $\epsilon_{\sgl(d,\mathbb R)}^d$ over $\sgl(d,\mathbb R)$ of rank $d$.
Let $\pi_1:\sgl(d,\mathbb R) \times \mathbb R^d \rightarrow \sgl(d,\mathbb R)$ and $\pi_2:\sgl(d,\mathbb R) \times \mathbb R^d \rightarrow \mathbb R^d$ be the natural projections.
Define a Nash bilinear form $b: \epsilon_{\sgl(d,\mathbb R)}^d \oplus \epsilon_{\sgl(d,\mathbb R)}^d \rightarrow \mathbb R$ by $ b(v,w)={}^t\!\pi_2(v)S\pi_2(w)$ for any $v,w \in \sgl(d,\mathbb R) \times \mathbb R^d$ with $S=\pi_1(v)=\pi_1(w)$.
The corollary is obtained by applying Theorem \ref{thm:nash_bundle} to $\epsilon_{\sgl(d,\mathbb R)}^d$ and $b$.
\end{proof}

\begin{cor}\label{cor:triv_bilsp1}
Consider a definable $C^r$ manifold $M$, where $r$ is a nonnegative integer. 
Let $(\xi,s)$ be a definable $C^r$ bilinear space over $M$ of rank $d$.
There exist a finite definable open covering $\{V_i\}_{i=1}^q$ of $M$ and $0 \leq r_{+,i} \leq d$ such that the restriction of the bilinear space $(\xi,s)$ to $V_i$ is definably $C^r$ isotropic to a trivial bilinear space of type $(r_{+,i},r_{-,i})$ for any $1 \leq i \leq q$. Here, $r_{+,i}=d-r_{-,i}$.
\end{cor}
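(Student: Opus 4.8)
The plan is to reduce the statement to the local normal form for Nash symmetric matrices provided by Corollary~\ref{cor:nash_bundle1}, and then to transport that normal form back to $M$ through the family of definable $C^r$ maps that Proposition~\ref{prop:bilequiv} attaches to the bilinear form $s$. First I would fix a definable $C^r$ atlas $(W_j,\phi_j:W_j\times\mathbb R^d\rightarrow p^{-1}(W_j))_{j=1}^{q_0}$ of $\xi$ and take the definable $C^r$ maps $s_j:W_j\rightarrow\sgl(d,\mathbb R)$ associated with $s$ by Proposition~\ref{prop:bilequiv}, so that $s(\phi_j(x,v),\phi_j(x,w))={}^t\!v\,s_j(x)\,w$ for all $x\in W_j$ and $v,w\in\mathbb R^d$. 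Let $\{U_i\}_{i=1}^p$ be the semialgebraic open covering of $\sgl(d,\mathbb R)$, and $g_i:U_i\rightarrow\gl(d,\mathbb R)$, $0\le r_{+,i}\le d$ the Nash maps and integers, furnished by Corollary~\ref{cor:nash_bundle1}. For each pair $(i,j)$ put $W_{i,j}=s_j^{-1}(U_i)$; this is a definable open subset of $W_j$, and since $\{U_i\}_i$ covers $\sgl(d,\mathbb R)$ and $\{W_j\}_j$ covers $M$, the finite family $\{W_{i,j}\}_{i,j}$ is a definable open covering of $M$. On $W_{i,j}$ the composite $g_i\circ s_j$ is a well-defined map into $\gl(d,\mathbb R)$, definable and of class $C^r$ since $s_j$ is definable $C^r$ and $g_i$, being a Nash map, is semialgebraic (hence definable) and $C^\infty$ (hence $C^r$); by the choice of $g_i$ one has ${}^t\!(g_i\circ s_j)(x)\,s_j(x)\,(g_i\circ s_j)(x)=\operatorname{diag}(I_{r_{+,i}},-I_{r_{-,i}})$ for every $x\in W_{i,j}$, with $r_{-,i}=d-r_{+,i}$.

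Next I would exhibit the isometry. Define $\psi_{i,j}:W_{i,j}\times\mathbb R^d\rightarrow p^{-1}(W_{i,j})$ by $\psi_{i,j}(x,v)=\phi_j\bigl(x,(g_i\circ s_j)(x)\,v\bigr)$. Since $g_i\circ s_j$ takes values in $\gl(d,\mathbb R)$ and is definable $C^r$, and since matrix inversion is semialgebraic and $C^\infty$ on $\gl(d,\mathbb R)$, the map $\psi_{i,j}$ is a definable $C^r$ isomorphism of vector bundles from the trivial bundle $\epsilon^d_{W_{i,j}}$ onto $\xi|_{W_{i,j}}$. Moreover, for $x\in W_{i,j}$ and $v,w\in\mathbb R^d$,
\[ s(\psi_{i,j}(x,v),\psi_{i,j}(x,w))={}^t\!\bigl((g_i\circ s_j)(x)v\bigr)\,s_j(x)\,\bigl((g_i\circ s_j)(x)w\bigr)={}^t\!v\,\operatorname{diag}(I_{r_{+,i}},-I_{r_{-,i}})\,w, \]
which is precisely the value of the trivial bilinear form $r_{+,i}\langle 1\rangle\perp r_{-,i}\langle -1\rangle$ on $((x,v),(x,w))$. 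Hence $\psi_{i,j}$ is a definable $C^r$ isometry between the trivial bilinear space of type $(r_{+,i},r_{-,i})$ over $W_{i,j}$ and $(\xi,s)|_{W_{i,j}}$. Re-indexing the finite family $\{W_{i,j}\}$ as $\{V_k\}$ with the associated integers $r_{+,k}$ then yields the required covering.

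The only point requiring real care is the verification that $g_i\circ s_j$ is a legitimate definable $C^r$ map, i.e. that a Nash map on a semialgebraic subset of $\sgl(d,\mathbb R)$ may be composed with a definable $C^r$ map on $M$ to give a definable $C^r$ map; this rests on the facts that semialgebraic sets and maps are definable in the given o-minimal expansion of the real field and that Nash maps are of class $C^\infty$. Everything else is bookkeeping: one should recall that Proposition~\ref{prop:bilequiv} identifies $s$ on a trivialized piece with the Gram matrix $s_j$, which is exactly what makes the displayed computation valid, and that a finite union of finite coverings is again a finite covering.
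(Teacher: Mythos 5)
Your proposal is correct and follows essentially the same route as the paper: the paper also reduces to the locally trivialized situation, views $s$ there as a definable $C^r$ map into $\sgl(d,\mathbb R)$ via Proposition \ref{prop:bilequiv}, and pulls back the semialgebraic covering and Nash congruence maps of Corollary \ref{cor:nash_bundle1} to obtain the covering $\{V_i\}$ and the local isometries with trivial bilinear spaces. Your version merely spells out the atlas bookkeeping and the Gram-matrix verification that the paper leaves implicit.
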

\begin{proof}
We may assume that $\xi$ is a trivial bundle over $M$ without loss of generality.
Let $b: M \rightarrow \sgl(d,\mathbb R)$ be the definable $C^r$ map induced from the bilinear form $s$.
Let $\{U_i\}_{i=1}^p$ be a finite semialgebraic open covering of $\sgl(d,\mathbb R)$ given in Corollary \ref{cor:nash_bundle1}.
Set $V_i=b^{-1}(U_i)$.
The restriction of $\xi$ to $V_i$ is obviously definably $C^r$ isotropic to the trivial bilinear space of type $(r_{+,i},r_{-,i})$ for some $0 \leq r_{+,i} \leq d$ and $r_{-,i}=d-r_{+,i}$.
\end{proof}

\begin{lem}\label{bilsp_decomp_pre}
Consider a definable $C^r$ manifold $M$, where $r$ is a nonnegative integer. 
Let $U$ and $V$ be definable open subsets of $M$ with $M= U \cup V$.
Let $(\xi,s)$ be a definable $C^r$ bilinear space over $M$ of rank $d$.
We have a definable $C^r$ isometry $\varphi:(\epsilon_U^{r_++r_-}, r_+\left<1\right> \perp r_-\left<-1\right>) \rightarrow (\xi,s)|_{U}$, a positive definite definable bilinear space $(\nu_+,s|_{\nu_+})$ of rank $r_+$ over $V$ and a negative definite definable bilinear space $(\nu_-,s|_{\nu_-})$ of rank $r_-$ over $V$ with $\nu_+\oplus\nu_- = \xi|_{V}$.
Then, there exists a decomposition $\xi = \xi_+ \oplus \xi_-$ by definable $C^r$ vector subbundles $\xi_+$ and $\xi_-$ of $ \xi$ over $M$ such that the restrictions $s|_{\xi_+}$ and $s|_{\xi_-}$ are positive and negative definite, respectively. 
\end{lem}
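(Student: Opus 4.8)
The plan is to produce a single $s$-positive-definite definable $C^r$ vector subbundle $\xi_+\subseteq\xi$ of rank $r_+$ and then put $\xi_-:=\xi_+^{\perp}$, the subbundle whose fibre over $x$ is $\{w\in\xi_x\;|\;s(w,v)=0\text{ for all }v\in(\xi_+)_x\}$. Because $s|_{\xi_+}$ is nondegenerate one gets $\xi=\xi_+\oplus\xi_-$ with $\xi_-$ a definable $C^r$ subbundle of rank $r_-$, and the signature of $s$ is constantly $(r_+,r_-)$ — over $U$ since $(\xi,s)|_U$ is the trivial bilinear space of type $(r_+,r_-)$, and over $V$ since the fibres of $\nu_+$ and $\nu_-$ furnish positive- and negative-definite subspaces of ranks $r_+$ and $r_-$ — so $s|_{\xi_-}$ is automatically negative definite. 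Thus $\nu_-$ plays no further role and the task reduces to glueing the $s$-positive-definite subbundles $A_U:=\varphi\big(U\times(\mathbb R^{r_+}\times\{0\})\big)$ over $U$ and $\nu_+$ over $V$ into one subbundle over $M$.

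The first step is to understand the two subbundles on $U\cap V$. Writing $B_U:=\varphi\big(U\times(\{0\}\times\mathbb R^{r_-})\big)$, the decomposition $\xi|_U=A_U\oplus B_U$ is $s$-orthogonal with $s|_{A_U}$ positive definite and $s|_{B_U}$ negative definite (as $\varphi$ is an isometry). On $U\cap V$ one has $\nu_+\cap B_U=0$ (a nonzero common vector would be simultaneously $s$-positive and $s$-negative), so the projection $A_U\oplus B_U\to A_U$ restricts to an isomorphism $\nu_+\to A_U$; composing its inverse with the projection onto $B_U$ gives a definable $C^r$ bundle morphism $g\colon A_U|_{U\cap V}\to B_U|_{U\cap V}$ with $\nu_+|_{U\cap V}=\{a+g(a)\;|\;a\in A_U\}$. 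A short computation then shows that for every $t\in[0,1]$ the graph $\{a+tg(a)\;|\;a\in A_U\}$ is again $s$-positive definite: for $0\neq a\in A_U$, the orthogonality $A_U\perp_s B_U$ kills the cross term, giving $s(a+tg(a),a+tg(a))=s(a,a)+t^{2}s(g(a),g(a))$, which is $\geq s(a,a)>0$ when $s(g(a),g(a))\geq0$ and $\geq s(a,a)+s(g(a),g(a))=s(a+g(a),a+g(a))>0$ when $s(g(a),g(a))<0$, since $a+g(a)$ is a nonzero vector of $\nu_+$.

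For the glueing I would take a definable $C^r$ partition of unity $\lambda_U,\lambda_V$ subordinate to $\{U,V\}$ (Lemma \ref{lem:unity}): $\lambda_U+\lambda_V=1$, $\overline{\{\lambda_U>0\}}\subseteq U$, $\overline{\{\lambda_V>0\}}\subseteq V$. Put $O_U:=M\setminus\overline{\{\lambda_V>0\}}$ and $O_V:=M\setminus\overline{\{\lambda_U>0\}}$; one checks $O_U\subseteq U$, $O_V\subseteq V$, $O_U\cap O_V=\emptyset$ (using $\lambda_U+\lambda_V=1$), and $(U\cap V)\cup O_U\cup O_V=M$ (using $\overline{\{\lambda_U>0\}}\cap\overline{\{\lambda_V>0\}}\subseteq U\cap V$). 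Then $\xi_+$ is defined on this open cover by
\begin{equation*}
\xi_+|_{U\cap V}=\{\,a+(\lambda_V g)(a)\;|\;a\in A_U\,\},\qquad \xi_+|_{O_U}=A_U,\qquad \xi_+|_{O_V}=\nu_+ ,
\end{equation*}
where $\lambda_V g$ denotes $g$ rescaled fibrewise by $\lambda_V$. On $O_U\cap(U\cap V)$ one has $\lambda_V\equiv0$, so the first two descriptions agree, and on $O_V\cap(U\cap V)$ one has $\lambda_V\equiv1$, so the first and third agree; hence $\xi_+$ is a well-defined definable $C^r$ subbundle of rank $r_+$, and it is $s$-positive definite by the computation above on $U\cap V$ and trivially on $O_U$, $O_V$. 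Setting $\xi_-:=\xi_+^{\perp}$ then gives the required decomposition.

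The one real point to get right is the choice of open cover for the glueing. The perturbed bundle $\{a+(\lambda_V g)(a)\}$ equals $A_U$ only where $\lambda_V$ vanishes and equals $\nu_+$ only where $\lambda_V=1$, so it glues to neither $A_U$ over all of $U$ nor $\nu_+$ over all of $V$; shrinking to $O_U$ and $O_V$, where $\lambda_V$ is identically $0$ and $1$ respectively, is exactly what makes the three pieces compatible. The only computational content is the $s$-positive definiteness of the graphs of $tg$ and the constancy of the signature (needed to see $\xi_+^{\perp}$ is negative definite); verifying that each piece is a definable $C^r$ subbundle — being the image of an injective definable $C^r$ bundle morphism — and that a subobject which is locally such a subbundle is one globally is routine.
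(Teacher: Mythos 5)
Your proof is correct and follows essentially the same route as the paper: the paper also writes $\nu_+$ over $U\cap V$ as the graph of a map into the negative summand (via the Grassmannian chart $\psi(\theta)$) and scales that map by a partition of unity to interpolate between $\varphi(U\times(\mathbb R^{r_+}\times\{0\}))$ and $\nu_+$. Your departures are minor: you check positivity of the interpolated graphs by a direct computation rather than by convexity of the set $\Omega$ of admissible $\theta$, you obtain $\xi_-$ as the $s$-orthogonal complement via the constant-signature argument instead of repeating the construction, and your open cover $O_U$, $O_V$, $U\cap V$ makes the gluing slightly more explicit than the paper's case-by-case definition.
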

\begin{proof}
Set $d=r_++r_-$.
Let $\sigma: U \cap V \rightarrow \grassman(d,r_+)$ be the definable $C^r$ map given by $\varphi^{-1}((\nu_+)_x) = \{x\} \times \sigma(x)$.
Here, $\grassman(d,r_+)$ denotes the Grassmanian of $r_+$-dimensional subspaces of an $d$-dimensional vector space.
Note that the restriction of the bilinear form $r_+\left<1\right> \perp r_-\left<-1\right>$ to the vector space $\sigma(x)$ is positive definite for any $x \in U \cap V$.
Let $M(r_-,r_+)$ be the set of all real-valued $r_- \times r_+$ matrices.
Set $\mathcal V=\{V \in \grassman(d,r_+)\;|\; V \cap (\{0\} \times \mathbb R^{r_-}) = \{0\}\}$.
Consider the biregular isomorphism $\psi: M(r_-,r_+) \rightarrow \mathcal V$ given by $\psi(\theta)=\{(v,\theta v) \in \mathbb R^d\;|\; v \in \mathbb R^{r_+}\}$ and set $\Omega = \{ \theta \in M(r_-,r_+)\;|\; \|\theta v\|_{r_-} < \| v\|_{r_+} \text{ for any } 0 \not= v \in \mathbb R^{r_+}\}$.
Remember that $\| \cdot \|_{r_+}$ and $\| \cdot \|_{r_-}$ denote the Euclidean norm in $\mathbb R^{r_+}$ and $\mathbb R^{r_-}$, respectively.
The semialgebraic set $\Omega$ is convex.
We can immediately show that $r_+\left<1\right> \perp r_-\left<-1\right>$ is positive definite on the vector space $\psi(\theta)$ if and only if $\theta \in \Omega$.
Let $\lambda$ and $\mu$ be a definable $C^r$ partition of unity on $M$ subordinate to the cover $M=U \cup V$ given by Lemma \ref{lem:unity}.
Define the subbundle $\xi_+$ of $\xi$ as follows:
\begin{equation*}
(\xi_+)_x = \left\{\begin{array}{ll}
\varphi(\{x\} \times \psi(\mu(x) \psi^{-1}(\sigma(x))) & \text{ if } x \in U \cap V \text{,}\\
\varphi(\{x\} \times (\mathbb R^{r_+} \times \{0\})) & \text{ if } x \in U \setminus U \cap V \text{ and }\\
(\nu_+)_x & \text{ if } x \in V \setminus U \cap V \text{.}
\end{array}\right.
\end{equation*}
Since $\theta$ is convex, we have $\mu(x) \psi^{-1}(\sigma(x)) \in \Omega$ for any $x \in U \cap V$; hence, the restriction $s|_{\xi_+}$ of $s$ to $\xi_+$ is positive definite.
We can construct $\xi_-$ in the same way.
\end{proof}

\begin{cor}\label{cor:bilsp_decomp}
Consider a definable $C^r$ manifold $M$, where $r$ is a nonnegative integer. 
Let $(\xi,s)$ be a definable $C^r$ bilinear space over $M$ of rank $d$.
Then, there exists a decomposition $\xi = \xi_+ \oplus \xi_-$ by definable $C^r$ vector subbundles $\xi_+$ and $\xi_-$ of $ \xi$ over $M$  such that the restrictions $s|_{\xi_+}$ and $s|_{\xi_-}$ of the bilinear form $s$ to $\xi_+$ and $\xi_-$ are positive and negative definite, respectively. 
The subbundles $\xi_+$ and $\xi_-$ are uniquely determined up to definable $C^r$ isomorphism.
\end{cor}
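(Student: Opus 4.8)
The plan is to obtain the decomposition locally from Corollary~\ref{cor:triv_bilsp1}, to patch the local pieces together by a finite induction built on Lemma~\ref{bilsp_decomp_pre}, and then to settle uniqueness by a short fibrewise argument. First I would reduce to the case in which $M$ is definably connected: a definable $C^r$ manifold has finitely many definably connected components, each again a definable $C^r$ manifold, and both assertions are compatible with this decomposition. With $M$ connected, Corollary~\ref{cor:triv_bilsp1} gives a finite definable open covering $\{V_i\}_{i=1}^q$ of $M$ and definable $C^r$ isometries of $(\xi,s)|_{V_i}$ with trivial bilinear spaces of type $(r_{+,i},r_{-,i})$. Since $r_{+,i}$ is the dimension of a maximal positive definite subspace of the fibre $\xi_x$ for every $x\in V_i$ (Sylvester's law of inertia), the value $r_{+,i}$ agrees on overlaps; hence, $M$ being connected, there are fixed integers $r_+,r_-$ with $r_++r_-=d$ and $r_{+,i}=r_+$, $r_{-,i}=r_-$ for all $i$, so that every $V_i$ carries a definable $C^r$ isometry with the \emph{same} trivial bilinear space $r_+\langle 1\rangle\perp r_-\langle -1\rangle$.

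Next I would induct on the open sets $W_k=V_1\cup\cdots\cup V_k$, with inductive hypothesis that $\xi|_{W_k}$ admits a decomposition $\xi|_{W_k}=\xi_+^{(k)}\oplus\xi_-^{(k)}$ into definable $C^r$ subbundles of ranks $r_+$ and $r_-$ on which $s$ is, respectively, positive and negative definite. For $k=1$ the isometry on $V_1$ transports the evident orthogonal splitting of the trivial bilinear space, which is the base case. For the step from $W_k$ to $W_{k+1}=W_k\cup V_{k+1}$ I would apply Lemma~\ref{bilsp_decomp_pre} with its ambient manifold taken to be $W_{k+1}$, its ``$U$'' taken to be $V_{k+1}$ (which carries the required trivializing isometry with $r_+\langle 1\rangle\perp r_-\langle -1\rangle$), and its ``$V$'' taken to be $W_k$ (on which the inductive hypothesis supplies exactly the positive and negative definite subbundles $\nu_\pm=\xi_\pm^{(k)}$ with $\nu_+\oplus\nu_-=\xi|_{W_k}$); the lemma then returns a decomposition $\xi|_{W_{k+1}}=\xi_+^{(k+1)}\oplus\xi_-^{(k+1)}$ of the required kind. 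After $q$ steps $W_q=M$, which proves existence.

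For uniqueness, suppose $\xi=\xi_+\oplus\xi_-=\xi'_+\oplus\xi'_-$ are two such decompositions. Fibrewise $(\xi_+)_x$ is a maximal positive definite subspace of $\xi_x$ (a strictly larger subspace would meet $(\xi_-)_x$ in a nonzero $v$ with $s(v,v)>0$ and $s(v,v)\le 0$), so $\xi_+$ and $\xi'_+$ have the same rank $r_+$, and likewise for the negative parts. Let $\pi\colon\xi\to\xi_+$ be the definable $C^r$ $M$-morphism projecting along $\xi_-$, which is of class $C^r$ because the splitting is by definable $C^r$ subbundles. Restricting $\pi$ to $\xi'_+$: if $v\in(\xi'_+)_x$ lies in $\ker\pi$ then $v\in(\xi_-)_x$, whence $s(v,v)\ge 0$ and $s(v,v)\le 0$, so $s(v,v)=0$ and $v=0$ by positive definiteness on $\xi'_+$. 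Thus $\pi|_{\xi'_+}$ is a fibrewise injective definable $C^r$ $M$-morphism between definable $C^r$ vector bundles of equal rank, hence a definable $C^r$ isomorphism (a fibrewise bijective definable $C^r$ $M$-morphism is invertible as such, by Cramer's rule in trivializing charts, exactly as in the proof of Theorem~\ref{thm:main1_0}); symmetrically $\xi_-\cong\xi'_-$.

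The step I expect to require the most care is the inductive step: Lemma~\ref{bilsp_decomp_pre} is asymmetric in $U$ and $V$, demanding a full trivializing isometry on $U$ but only a definite splitting on $V$, so the induction must always feed a single covering element $V_{k+1}$ into the role of $U$ and the accumulated open set $W_k$ into the role of $V$. Making this fit cleanly is precisely why the reduction to connected $M$ — which guarantees a single trivial model $r_+\langle 1\rangle\perp r_-\langle -1\rangle$ across all charts and matching inertia indices on all overlaps — is carried out at the outset; everything else is a routine assembly of results already established.
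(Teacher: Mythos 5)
Your proposal is correct and follows essentially the same route as the paper: existence via Corollary \ref{cor:triv_bilsp1} together with an induction over the covering using Lemma \ref{bilsp_decomp_pre}, and uniqueness by showing the composition of the inclusion $\xi'_+\hookrightarrow\xi$ with the projection $\xi\to\xi_+$ is a definable $C^r$ isomorphism. You merely supply details the paper leaves implicit (constancy of the signature on connected components, the precise bookkeeping of the induction, and the fibrewise-injectivity plus equal-rank argument), all of which are sound.
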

\begin{proof}
By Corollary \ref{cor:triv_bilsp1}, there exist a finite definable open covering $\{V_i\}_{i=1}^q$ of $M$ and $0 \leq r_i \leq d$ such that the restriction of the bilinear space $(\xi,s)$ to $V_i$ is definably $C^r$ isotropic to a trivial bilinear space of type $(r_i,s_i)$ for any $1 \leq i \leq q$. 
Here, $s_i=d-r_i$.
The existence of decomposition follows from Lemma \ref{bilsp_decomp_pre} and induction.

We next show the uniqueness.
Let $\xi=\xi_+'\oplus \xi_-'$ be another decomposition. 
Let $i:\xi_+' \rightarrow \xi$ be the natural inclusion and $\pi:\xi =  \xi_+ \oplus \xi_- \rightarrow \xi_+$ be the natural projection.
The composition $\pi \circ i: \xi_+' \rightarrow \xi_+$ is a definable $C^r$ isomorphism.
Hence, $\xi_+$ is unique up to definable $C^r$ isomorphism.
\end{proof}

We return to the proof of the homotopy theorem for definable $C^r$ bilinear spaces.
The proof is similar to that of Theorem \ref{thm:homvec}.

\begin{lem}\label{lem:bilinear1}
Let $U$ be a definable open set, and $W$ be a definable open subset of $U \times \mathbb R$ containing $U \times [0,1]$.
Let $\varphi_1,\varphi_2,\varphi_3: U \rightarrow [0,1]$ be definable $C^r$ functions with $\varphi_1 < \varphi_2 < \varphi_3$ on $U$.
We consider a definable $C^r$ bilinear space $(\xi=(E,p,W),s)$ over $W$.
Define definable sets $B_1$ and $B_2$ as follows:
\begin{align*}
B_1 &= \{(x,t) \in U \times [0,1]\;|\; \varphi_1(x) \leq t \leq \varphi_2(x) \} \text{ and}\\ 
B_2 &= \{(x,t) \in U \times [0,1]\;|\; \varphi_2(x) \leq t \leq \varphi_3(x) \} \text{.}
\end{align*} 
For $i=1,2$, let $U_i$ be a definable open subset of $W$ such that $B_i \subset U_i$ and the restriction $(\xi,s)|_{U_i}$ of $(\xi,s)$ to $U_i$ is definably $C^r$ isotropic to a trivial bilinear space over $U_i$ of type $(r_+,r_-)$.
Then, there exists a definable open subset $W'$ of $W$ such that $B_1 \cup B_2 \subset W'$ and the restriction $(\xi,s)|_{W'}$ of $(\xi,s)$ to $W'$ is definably $C^r$ isotropic to a trivial bilinear space over $W'$ of type $(r_+,r_-)$.
\end{lem}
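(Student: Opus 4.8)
The plan is to exhibit a definable $C^r$ isometry between $(\xi,s)|_{W'}$ and the trivial bilinear space of type $(r_+,r_-)$ over $W'$, where, exactly as in the proof of Lemma \ref{lem:vecbun1},
\[
W' = \{(x,t) \in U_1 \;|\; t \leq \varphi_2(x)\} \cup \{(x,t) \in U_2 \;|\; t \geq \varphi_2(x)\}.
\]
As in that proof, $W'$ is a definable open subset of $W$ and $B_1 \cup B_2 \subseteq W'$; moreover $B_i \subseteq W' \cap U_i$ and $W' \subseteq U_1 \cup U_2$, so $s$ has constant signature $(r_+,r_-)$ on every fiber over $W'$. One would like to glue the two isometric trivializations of $(\xi,s)$ over $U_1$ and $U_2$ by a transition function valued in the orthogonal group $O(r_+,r_-)$, just as in Lemma \ref{lem:vecbun1}; however this only produces a definable \emph{$C^0$} isometry, and --- unlike the situation for plain isomorphisms in Corollary \ref{cor:bundle_iso} --- we have no result upgrading a definable $C^0$ isometry to a definable $C^r$ one. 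Getting around this is the crux of the proof, and the device is to split off the positive and negative definite summands, over which a trivialization can be written down directly.

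First I would apply Corollary \ref{cor:bilsp_decomp} to $(\xi,s)|_{W'}$ (a definable $C^r$ bilinear space over the definable $C^r$ manifold $W'$, of constant signature $(r_+,r_-)$), obtaining a decomposition $\xi|_{W'} = \xi_+ \oplus \xi_-$ into definable $C^r$ subbundles with $s|_{\xi_+}$ positive definite of rank $r_+$ and $s|_{\xi_-}$ negative definite of rank $r_-$. Over $W' \cap U_i$ the hypothesis makes $(\xi,s)$ definably $C^r$ isometric to the trivial bilinear space of type $(r_+,r_-)$, whose canonical definite decomposition is $(\epsilon^{r_+},r_+\langle 1\rangle) \perp (\epsilon^{r_-},r_-\langle -1\rangle)$; hence the uniqueness clause of Corollary \ref{cor:bilsp_decomp} forces $\xi_+|_{W'\cap U_i}$ and $\xi_-|_{W'\cap U_i}$ to be trivial definable $C^r$ vector bundles, for $i = 1,2$. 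Thus $\xi_+$ is a definable $C^r$ vector bundle over $W'$ which is trivial over each of the open sets $W' \cap U_1 \supseteq B_1$ and $W' \cap U_2 \supseteq B_2$; running the gluing argument of Lemma \ref{lem:vecbun1} on $\xi_+$ over $W'$ with the functions $\varphi_1 < \varphi_2 < \varphi_3$ --- that argument uses only that each $B_i$ sits inside an open set over which the bundle is trivial, so the auxiliary condition $W \supseteq U \times [0,1]$ is immaterial, and the open set it outputs is again $W'$ --- shows that $\xi_+|_{W'}$ is definably $C^r$ isomorphic to a trivial bundle $\epsilon_{W'}^{r_+}$. The same argument gives $\xi_-|_{W'} \cong \epsilon_{W'}^{r_-}$.

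It remains to put the two definite forms in standard shape. Transporting $s|_{\xi_+}$ through an isomorphism $\xi_+|_{W'} \cong \epsilon_{W'}^{r_+}$ turns it into a positive definite definable $C^r$ bilinear form on $\epsilon_{W'}^{r_+}$, given by a definable $C^r$ map $B \colon W' \to \sgl(r_+,\mathbb R)$ taking values in the positive definite symmetric matrices. Since $A \mapsto A^{-1/2}$ (the positive definite square root of $A^{-1}$) is a semialgebraic and $C^{\infty}$ map on the set of positive definite symmetric matrices, the composite $x \mapsto B(x)^{-1/2}$ is a definable $C^r$ map $W' \to \gl(r_+,\mathbb R)$; as $B^{-1/2}$ is symmetric and $B^{-1/2}\,B\,B^{-1/2} = I_{r_+}$, the map $(x,v) \mapsto (x, B(x)^{-1/2}v)$ is a definable $C^r$ isometry of $(\epsilon_{W'}^{r_+},r_+\langle 1\rangle)$ onto it. Hence $(\xi_+,s|_{\xi_+})$ is definably $C^r$ isometric to $(\epsilon_{W'}^{r_+},r_+\langle 1\rangle)$, and applying the same to the positive definite form $-s|_{\xi_-}$ shows $(\xi_-,s|_{\xi_-})$ is definably $C^r$ isometric to $(\epsilon_{W'}^{r_-},r_-\langle -1\rangle)$. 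Taking the orthogonal sum, $(\xi,s)|_{W'}$ is definably $C^r$ isometric to $(\epsilon_{W'}^{r_+},r_+\langle 1\rangle) \perp (\epsilon_{W'}^{r_-},r_-\langle -1\rangle)$, the trivial bilinear space of type $(r_+,r_-)$ over $W'$, as desired. The one genuine obstacle is the $C^0$-versus-$C^r$ discrepancy noted above, which the passage through Corollary \ref{cor:bilsp_decomp} and the matrix square root is designed to circumvent.
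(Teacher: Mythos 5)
Your argument is essentially correct but follows a genuinely different route from the paper's. The paper stays with the indefinite form throughout: it reuses the glued $C^0$ isomorphism $u$ of Lemma \ref{lem:vecbun1}, checks that $u$ is in fact a $C^0$ isometry onto the trivial bilinear space (evaluating $s$ on the graph of $\varphi_2$ forces the transition matrix $g(x)$ to satisfy ${}^t\!g(x)J(r_+,r_-)g(x)=J(r_+,r_-)$), then approximates $u$ by a definable $C^r$ isomorphism $\Tilde{u}$ as in Corollary \ref{cor:bundle_iso}, and finally corrects $\Tilde{u}$ into an exact isometry by composing with $\Tilde{g}(\Tilde{S}(x,t))$, where $\Tilde{g}$ is the Nash diagonalizing map of Corollary \ref{cor:nash_bundle1} on a semialgebraic neighbourhood of $J(r_+,r_-)$. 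You instead split $(\xi,s)|_{W'}$ into definite parts via Corollary \ref{cor:bilsp_decomp}, re-run the gluing of Lemma \ref{lem:vecbun1} on each definite subbundle (your observations that the hypothesis $U\times[0,1]\subset W$ is never used in that proof, that the output open set is again $W'$, and that triviality of $\xi_{\pm}$ over $W'\cap U_i$ follows from the uniqueness clause, are all correct), and then normalize each definite form globally by the Nash map $A\mapsto A^{-1/2}$ on positive definite symmetric matrices. Your route replaces the approximate-then-correct step at the level of bilinear forms, and the indefinite diagonalization of Corollary \ref{cor:nash_bundle1}, by the elementary square-root normalization; approximation still enters, but only indirectly through Corollary \ref{cor:bundle_iso} inside Lemma \ref{lem:vecbun1}.

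One step needs repair. In the final paragraph you treat $(\xi,s)|_{W'}$ as the orthogonal sum of $(\xi_+,s|_{\xi_+})$ and $(\xi_-,s|_{\xi_-})$, but Corollary \ref{cor:bilsp_decomp} only asserts a direct-sum decomposition $\xi=\xi_+\oplus\xi_-$ with definite restrictions; it does not assert $s(\xi_+,\xi_-)=0$, and the construction in Lemma \ref{bilsp_decomp_pre} (interpolation by a partition of unity) does not obviously produce $s$-orthogonal summands. Without orthogonality, isometries of the two summands onto $r_+\left<1\right>$ and $r_-\left<-1\right>$ do not assemble into an isometry of $(\xi,s)|_{W'}$, since cross terms survive. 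The fix is short: keep $\xi_+$ but replace $\xi_-$ by the $s$-orthogonal complement $\xi_+^{\perp}$ of $\xi_+$ in $\xi|_{W'}$. Since $s|_{\xi_+}$ is fibrewise nondegenerate, the $s$-orthogonal projection onto $\xi_+$ is a definable $C^r$ bundle map, so $\xi_+^{\perp}$ is a definable $C^r$ subbundle, $s$ is negative definite on it by Sylvester's law, and now $(\xi,s)|_{W'}=(\xi_+,s|_{\xi_+})\perp(\xi_+^{\perp},s|_{\xi_+^{\perp}})$ holds genuinely; your argument then applies verbatim with $\xi_+^{\perp}$ in place of $\xi_-$, its triviality over $W'\cap U_i$ again following from the uniqueness clause applied to the two orthogonal decompositions over $W'\cap U_i$.
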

\begin{proof}
We may assume that $U$ is connected without loss of generality.
Let $\xi|_{U_i}=(E_i,p_i,U_i)$, and let $u_i:U_i \times \mathbb R^d \rightarrow E_i$ be a definable $C^r$ isometry for $i=1,2$.
Define $W'$, $B_3$ and $u$ in the same way as Lemma \ref{lem:vecbun1}.
We showed that the map $u$ is a definable continuous isomorphism between the restriction $\xi|_{W'}$ of $\xi$ to $W'$ and a trivial bundle in Lemma \ref{lem:vecbun1}.
Set $J(r_+,r_-)=\left(\begin{array}{cc} I_{r_+} & O \\ O & -I_{r_-} \end{array}\right)$.
We have $s(u_i((x,t),v),u_i((x,t),w))= {}^t\!vJ(r_+,r_-) w$ for any $(x,t) \in U_i$ and $v,w \in \mathbb R^d$.
We want to show that the map $u$ is a definable $C^0$ isometry between $(\xi,s)|_{W'}$ and $(\epsilon_{W'}^d, r_+ \left< 1 \right> \perp r_- \left< -1 \right>)$, where $d = r_++r_-$.
We have only to show that 
\begin{equation*}
s(u((x,t),v),u((x,t),w))= {}^t\!v J(r_+,r_-) w
\end{equation*}
for any $(x,t) \in W'$ and $v,w \in \mathbb R^d$.
When $t \leq \varphi_2(x)$, it is obvious because $u = u_1$.
Since $u_1((x,t),v)=u_2((x,t),g(x)v)$ on $B_3$, we have $u_1((x,\varphi_2(x)),v)=u_2((x,\varphi_2(x)),g(x)v)$.
We get 
\begin{align*}
 {}^t\!v J(r_+,r_-) w &= s(u_1((x,\varphi_2(x)),v),u_1((x,\varphi_2(x)),w))\\
 &=s(u_2((x,\varphi_2(x)),g(x)v),u_2((x,\varphi_2(x)),g(x)w))\\
 &= {}^t\!v {}^t\!g(x)J(r_+,r_-) g(x)w\text{.}
 \end{align*} 
We have shown that $ {}^t\!v J(r_+,r_-) w = {}^t\!v {}^t\!g(x)J(r_+,r_-) g(x)w$ for any $x \in U$.
When $t > \varphi_2(x)$, we finally obtain 
\begin{align*}
s(u((x,t),v),u((x,t),w)) &= s(u_2((x,t),g(x)v),u_2((x,t),g(x)w))\\
& = {}^t\!v {}^t\!g(x)J(r_+,r_-) g(x)w={}^t\!v J(r_+,r_-) w\text{.}
\end{align*}

There exists a semialgebraic open subset $\mathcal U$ of $\sgl(d,\mathbb R)$ and a Nash map $\Tilde{g}: \mathcal U \rightarrow \gl(d,\mathbb R)$ such that $J(r_+,r_-) \in \mathcal U$ and ${}^t\!\Tilde{g}(S)S\Tilde{g}(S)=J(r_+,r_-)$ for any $S \in \mathcal U$ by Corollary \ref{cor:nash_bundle1}.
Take a definable $C^r$ isomorphism $\Tilde{u}:\xi|_{W'} \rightarrow \epsilon_{W'}^d$ sufficiently close to $u$ as in the proof of Corollary \ref{cor:bundle_iso}.
Define a definable $C^r$ bilinear form $\Tilde{s}$ over the trivial bundle $\epsilon_{W'}^d$ by $\Tilde{s}( ((x,t),v), ((x,t),w) ) = s(\Tilde{u}((x,t),v), \Tilde{u}((x,t),w) )$ for any $(x,t) \in W'$ and $v,w \in \mathbb R^d$.
Let $\Tilde{S}:W' \rightarrow \sgl(d,\mathbb R)$ be the definable $C^r$ map naturally induced from the bilinear form $\tilde{s}$.
Since $\Tilde{u}$ is sufficiently close to $u$, the nondegenerate symmetric matrix $\Tilde{S}(x,t)$ is sufficiently close to $J(r_+,r_-)$ for any $(x,t) \in W'$.
In particular, we may assume that $\Tilde{S}(x,t) \in \mathcal U$ for any $(x,t) \in W'$.

Define a definable $C^r$ isomorphism $\psi:W' \times \mathbb R^d \rightarrow W' \times \mathbb R^d$ by $\psi((x,t),v)=((x,t),\Tilde{g}(\Tilde{S}(x,t))v)$ for any $(x,t) \in W'$ and $v \in \mathbb R^d$.
The definable $C^r$ isomorphism $\Tilde{u} \circ \psi^{-1}$ is a definable $C^r$ isotropy between $(\xi,s)|_{W'}$ and a trivial bilinear space over $W'$ of type $(r_+,r_-)$.
\end{proof}

\begin{lem}\label{lem:bilinear2}
Let $M$ be a definable $C^r$ manifold and $W$ be a definable open subset of $M \times \mathbb R$ containing $M \times [0,1]$.
We consider a definable $C^r$ bilinear space $(\xi,s)$ over $W$.
Then, there exist a finite definable open covering $\{U_i\}_{i=1}^q$ of $M$ and a definable open subset $V_i$ of $U_i \times \mathbb R$ containing $U_i \times [0,1]$ such that the restriction $(\xi,s)|_{V_i}$ of $(\xi,s)$ to $V_i$ is definably $C^r$ isotropic to a trivial bilinear space. 
\end{lem}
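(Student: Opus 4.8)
The plan is to follow the proof of Lemma~\ref{lem:vecbun2} almost verbatim, with Corollary~\ref{cor:triv_bilsp1} playing the role of the definable $C^r$ atlas and Lemma~\ref{lem:bilinear1} playing the role of Lemma~\ref{lem:vecbun1}. First I would apply Corollary~\ref{cor:triv_bilsp1} to the definable $C^r$ bilinear space $(\xi,s)$ over the definable $C^r$ manifold $W$, obtaining a finite definable open covering $\{W_j\}_{j=1}^p$ of $W$ and integers $0 \le r_{+,j} \le d$ such that $(\xi,s)|_{W_j}$ is definably $C^r$ isotropic to the trivial bilinear space of type $(r_{+,j},r_{-,j})$, where $r_{-,j}=d-r_{+,j}$. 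Since $M$ is affine we may regard $M$ as a definable subset of some $\mathbb R^n$, and since the $W_j$ cover $W \supseteq M \times [0,1]$ they cover $M \times [0,1]$; so Lemma~\ref{lem:cover01} applied with $X=M$ yields a finite definable open covering $\{U_i\}_{i=1}^q$ of $M$ and definable $C^r$ functions $0=\varphi_{i,0} < \cdots < \varphi_{i,r_i}=1$ on $U_i$ such that each band
\[
B_{i,k}=\{(x,t) \in U_i \times [0,1] \mid \varphi_{i,k-1}(x) \le t \le \varphi_{i,k}(x)\}
\]
is contained in some $W_{j(i,k)}$.

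The one point that goes beyond a mechanical transcription is that, in order to merge a chain of consecutive bands over the same $U_i$ by repeated use of Lemma~\ref{lem:bilinear1}, all these bands must be isotropic to trivial bilinear spaces of the \emph{same} type $(r_+,r_-)$. To secure this I would first refine $\{U_i\}$ so that each $U_i$ is connected, which is possible because an open definable subset of $M$ has only finitely many connected components, each of them definable and open. For a fixed $i$, the set $U_i \times [0,1]$ is then connected and contained in $W$, and the signature of the nondegenerate symmetric bilinear form $s$ is a locally constant, hence constant, function on it. By Sylvester's law of inertia this constant signature must coincide with $(r_{+,j(i,k)},r_{-,j(i,k)})$ for every $k$ (note $B_{i,k}\neq\emptyset$ since $U_i\neq\emptyset$), so there is a common type $(r_{+,i},r_{-,i})$; setting $U_{i,k}=W_{j(i,k)}\cap(U_i\times\mathbb R)$, we have $B_{i,k}\subseteq U_{i,k}\subseteq W\cap(U_i\times\mathbb R)$ with $(\xi,s)|_{U_{i,k}}$ definably $C^r$ isotropic to the trivial bilinear space of type $(r_{+,i},r_{-,i})$.

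Finally, for each fixed $i$ I would merge the bands by induction on $k$, exactly as in Lemma~\ref{lem:vecbun2} but invoking Lemma~\ref{lem:bilinear1} (with base $U_i$ and ambient $W\cap(U_i\times\mathbb R)$) in place of Lemma~\ref{lem:vecbun1}: starting from $U_{i,1}$, at the $k$-th step one applies Lemma~\ref{lem:bilinear1} with $\varphi_1=\varphi_{i,k-1}$, $\varphi_2=\varphi_{i,k}$, $\varphi_3=\varphi_{i,k+1}$, taking as the two input open sets the set produced at the previous step (which contains $\{(x,t)\in U_i\times[0,1]\mid t\le\varphi_{i,k}(x)\}$) and $U_{i,k+1}$, both isotropic to the trivial bilinear space of type $(r_{+,i},r_{-,i})$. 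After finitely many applications one obtains a definable open subset $V_i$ of $U_i\times\mathbb R$ containing $U_i\times[0,1]$ such that $(\xi,s)|_{V_i}$ is definably $C^r$ isotropic to a trivial bilinear space, which together with $\{U_i\}$ is the desired data. The only genuinely new obstacle compared with the vector bundle case is the type-consistency step, and it is precisely that which forces the preliminary reduction to connected $U_i$ and the appeal to Sylvester's law; everything else runs parallel to the already-proved Lemma~\ref{lem:vecbun2}.
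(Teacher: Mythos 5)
Your proposal is correct and takes essentially the same approach as the paper, whose proof simply says to repeat the argument of Lemma \ref{lem:vecbun2} using Corollary \ref{cor:triv_bilsp1} and Lemma \ref{lem:bilinear1} in place of Lemma \ref{lem:vecbun1}. Your extra care about type consistency is sound, though the reduction to connected $U_i$ is not strictly needed: consecutive bands $B_{i,k}$ and $B_{i,k+1}$ share the graph of $\varphi_{i,k}$, so Sylvester's law already forces all the types $(r_{+,j(i,k)},r_{-,j(i,k)})$ along a fixed $U_i$ to coincide.
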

\begin{proof}
We can prove the lemma in the same way as Lemma \ref{lem:vecbun2} using Corollary \ref{cor:triv_bilsp1} and Lemma \ref{lem:bilinear1} in place of Lemma \ref{lem:vecbun1}.
\end{proof}

We are now ready to show the homotopy theorem for definable $C^r$ bilinear spaces.

\begin{thm}[Homotopy theorem for definable $C^r$ bilinear spaces]\label{thm:hombil}
Consider a definable $C^r$ manifold $M$, where $r$ is a nonnegative integer. 
Let $U$ be a definable open subset of $M \times \mathbb R$ containing $M \times [0,1]$.
Define the definable $C^r$ map $r:U \rightarrow U$ by $r(x,t)=(x,1)$.
Let $\xi=(E,p,U)$ be a definable $C^r$ vector bundle over $U$ and $s$ be a definable $C^r$ bilinear form over $\xi$.
Then, shrinking $U$ if necessary, two definable $C^r$ bilinear spaces $(\xi,s)$ and $r^*(\xi,s)$ are definably $C^r$ isometric.
\end{thm}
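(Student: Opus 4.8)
The plan is to run the proof of Theorem~\ref{thm:homvec} essentially verbatim, carrying the bilinear form along and checking that every morphism produced in that argument is in fact an isometry. So I would first construct the definable $C^r$ $U$-isomorphism $\widetilde u\colon\xi\to r^{*}\xi$ exactly as in Theorem~\ref{thm:homvec}, with one change at the outset: in place of Lemma~\ref{lem:vecbun2} I would invoke Lemma~\ref{lem:bilinear2} to obtain a finite definable open covering $\{U_i\}_{i=1}^{q}$ of $M$, definable open sets $V_i$ with $U_i\times[0,1]\subset V_i\subset U_i\times\mathbb R$, and definable $C^r$ \emph{isometries} $h_i\colon(\epsilon_{V_i}^{d},b_i)\to(\xi,s)|_{V_i}$, where each $b_i$ is a standard form of some type $(r_{+,i},r_{-,i})$ with $r_{+,i}+r_{-,i}=d$. (If one prefers all the $h_i$ to have the same model, one may first reduce to $M$ connected, using that a definable $C^r$ manifold has finitely many definable connected components and that the signature of a nondegenerate symmetric form is locally constant; this is convenient but not needed below.) After shrinking $U$ so that $U=\bigcup_i V_i$, the maps $r_i$ of Lemma~\ref{lem:unit2}, the sets $U_i'$ of Lemma~\ref{lem:covering}, and then $s_i$, $h_{i,j}$, $u_i$ and the $U$-morphisms $\widetilde{u_i}\colon s_{i-1}^{*}\xi\to s_i^{*}\xi$ are formed as in Theorem~\ref{thm:homvec}, which already tells us each $\widetilde{u_i}$, and hence $\widetilde u=\widetilde{u_q}\circ\cdots\circ\widetilde{u_1}$, is a definable $C^r$ $U$-isomorphism.

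The only new point is that each $\widetilde{u_i}$ preserves the pulled-back forms $s_{i-1}^{*}s$ and $s_i^{*}s$, and for this it suffices to inspect $\widetilde{u_i}$ over $V_i$ and off $p_{i-1}^{-1}(V_i)$ separately. Over $V_i$ the trivialization $h_{i-1,i}((x,t),v)=((x,t),h_i(s_{i-1}(x,t),v))$ carries $b_i$ onto $s_{i-1}^{*}s$, since $(s_{i-1}^{*}s)(h_{i-1,i}((x,t),v),h_{i-1,i}((x,t),w))=s(h_i(s_{i-1}(x,t),v),h_i(s_{i-1}(x,t),w))=b_i(v,w)$ because $h_i$ is an isometry; likewise $h_{i,i}$ carries $b_i$ onto $s_i^{*}s$. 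Because the second coordinates of the maps $r_j$ are mutually commuting affine maps of the form $t\mapsto\lambda_j(x)t+(1-\lambda_j(x))$, one has $s_{i-1}\circ r_i=r_i\circ s_{i-1}=s_i$, and this is exactly what makes $\widetilde{u_i}$ restrict over $V_i$ to $h_{i,i}\circ h_{i-1,i}^{-1}$, a composite of isometries. Off $p_{i-1}^{-1}(V_i)$ one has $r_i=\operatorname{id}$ (after the usual shrinking of $U$), hence $s_{i-1}=s_i$ there and $\widetilde{u_i}$ is the canonical identification of $s_{i-1}^{*}\xi$ with $s_i^{*}\xi$, which visibly preserves the forms; the two descriptions agree on the overlap exactly as in Theorem~\ref{thm:homvec}. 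Therefore $\widetilde u$ is a definable $C^r$ isometry between $(\xi,s)$ and $r^{*}(\xi,s)$.

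I do not expect a genuine obstacle: the argument is Theorem~\ref{thm:homvec} plus the single observation that the clutching maps there, being built from isometric trivializations and from the canonical identifications coming from $s_{i-1}=s_i$, are already isometries. The points that need a little care are exactly the ones already handled (somewhat implicitly) in Theorem~\ref{thm:homvec} — the repeated shrinking of $U$ so that $s_{i-1}(V_i)\subset V_i$ and so that the two pieces of each $\widetilde{u_i}$ glue — together with the remark that Lemma~\ref{lem:bilinear2} supplies \emph{isometric}, not merely isomorphic, local trivializations. Finally, Main Theorem~\ref{thm:main3} follows by restricting $\widetilde u$ to $M\times\{0\}$ and to $M\times\{1\}$: since $r(x,0)=r(x,1)=(x,1)$, both $(\xi,s)|_{M\times\{0\}}$ and $(\xi,s)|_{M\times\{1\}}$ become definably $C^r$ isometric to $(\xi,s)|_{M\times\{1\}}$.
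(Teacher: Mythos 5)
Your proposal follows essentially the same route as the paper's proof: replace Lemma \ref{lem:vecbun2} by Lemma \ref{lem:bilinear2} to obtain isometric local trivializations, rerun the construction of Theorem \ref{thm:homvec} verbatim, and verify that each $\Tilde{u_i}$ preserves the pulled-back bilinear forms, the identity map case off $p_{i-1}^{-1}(V_i)$ being trivial. Your check of the key point over $V_i$ (via the commutation $s_{i-1}\circ r_i=r_i\circ s_{i-1}=s_i$ and the identification of $\Tilde{u_i}$ there with $h_{i,i}\circ h_{i-1,i}^{-1}$, a composite of isometric trivializations onto the same constant form) is the same computation the paper performs, written out slightly more explicitly, and the shrinking caveats you flag are exactly those already absorbed into the proof of Theorem \ref{thm:homvec}.
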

\begin{proof}
We may assume that $M$ is connected without loss of generality.
There exist a finite definable open covering $\{U_i\}_{i=1}^q$ and a definable open subset $V_i$ of $U_i \times \mathbb R$ containing $U_i \times [0,1]$ such that the restriction $(\xi,s)|_{V_i}$ of $(\xi,s)$ to $V_i$ is definably $C^r$ isometric to a trivial bundle of type $(r_+,r_-)$ by Lemma \ref{lem:bilinear2}.
Let $h_i:V_i \times \mathbb R^d \rightarrow p^{-1}(V_i)$ be a definable $C^r$ isometry for any $1 \leq i \leq q$.
Shrinking $U$ if necessary, we may assume that $U=\bigcup_{i=1}^q V_i$.

We define $r_i$, $s_i$, $u_i$ and $\Tilde{u}_i$ in the same way as the proof of Theorem \ref{thm:homvec}.
Set $s_i^*\xi=(s_i^*E,p_i,U)$.
The notation $s_i^*s$ denotes the induced bilinear form over $s_i^*\xi$.
We have shown that $\Tilde{u}_i$ is a definable $C^r$ isomorphism between the definable $C^r$ vector bundles $s_{i-1}^*\xi$ and $s_i^*\xi$ in the proof of Theorem \ref{thm:homvec}.
We want to demonstrate that $\Tilde{u}_i$ is a definable $C^r$ isometry between the definable $C^r$ bilinear spaces $s_{i-1}^*(\xi,s)$ and $s_i^*(\xi,s)$.
For that purpose, we have only to show that $s_i^*s(\Tilde{u}_i(v),\Tilde{u}_i(w))=s_{i-1}^*(v,w)$ for any $v,w \in s_{i-1}^* E$ with $p_{i-1}(v)=p_{i-1}(w)$.
Identifying $s_i^*E$ with the space $\{((x,t),v) \in U \times s_{i-1}^*E\;|\; p_{i-1}(v)=r_i(x,t)\}$, we have $s_i^*s(((x,t),v),((x,t),w))=s_{i-1}^*s(v,w)$ for any $v,w \in s_{i-1}^*E$ with $p_{i-1}(v)=p_{i-1}(w)=r_i(x,t)$.
We easily get $s_i^*s(\Tilde{u}_i(v),\Tilde{u}_i(w))=s_i^*s((r_i(x,t),v),(r_i(x,t),w))=s_{i-1}^*(v,w)$.
We have shown that $\Tilde{u}_i$ is a definable $C^r$ isometry.

Set $\Tilde{u}=\Tilde{u_q} \circ \cdots \circ \Tilde{u_2} \circ \Tilde{u_1}$.
The $U$-morphism $\Tilde{u}$ is a definable $C^r$ $U$-isometry between $(\xi,s)$ and $r^*(\xi,s)$.
\end{proof}

The following corollary is Main Theorem \ref{thm:main3}.
\begin{cor}\label{thm:main3_0}
Consider a definable $C^r$ manifold $M$, where $r$ is a nonnegative integer. 
Let $U$ be a definable open subset of $M \times \mathbb R$ containing $M \times [0,1]$.
Let $(\Xi,B)$ be a definable $C^r$ bilinear space over $U$.
Then, two definable $C^r$ bilinear spaces $(\Xi,B)|_{M \times \{0\}}$ and $(\Xi,B)|_{M \times \{1\}}$ are definably $C^r$ isometric.
Here, the notation $(\Xi,B)|_{M \times \{0\}}$ denotes the restriction of the bilinear space $(\Xi,B)$ to $M \times \{0\}$.
\end{cor}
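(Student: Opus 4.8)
The plan is to deduce this corollary directly from the homotopy theorem for definable $C^r$ bilinear spaces, Theorem~\ref{thm:hombil}. Denote by $j_0, j_1 \colon M \to U$ the definable $C^r$ maps $j_0(x)=(x,0)$ and $j_1(x)=(x,1)$; after the standard identification of the definable $C^r$ submanifolds $M \times \{0\}$ and $M \times \{1\}$ of $U$ with $M$, the restrictions $(\Xi,B)|_{M \times \{0\}}$ and $(\Xi,B)|_{M \times \{1\}}$ are precisely the induced definable $C^r$ bilinear spaces $j_0^*(\Xi,B)$ and $j_1^*(\Xi,B)$. Let $r \colon U \to U$ be the definable $C^r$ map $r(x,t)=(x,1)$ from Theorem~\ref{thm:hombil}; it is well defined because $U \supset M \times [0,1]$, and it satisfies $r \circ j_0 = j_1$ and $r \circ j_1 = j_1$.

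First I would invoke Theorem~\ref{thm:hombil} to obtain a definable $C^r$ $U$-isometry $\varphi \colon (\Xi,B) \to r^*(\Xi,B)$, possibly after shrinking $U$ to a smaller definable open set still containing $M \times [0,1]$; this shrinking is harmless since $M \times \{0\}$ and $M \times \{1\}$ remain inside the shrunk set, so the two restrictions in the statement are unchanged. Next I would pull $\varphi$ back along $j_0$. Using the functoriality of the pullback of definable $C^r$ bilinear spaces, $j_0^*(r^*(\Xi,B))$ is canonically identified with $(r \circ j_0)^*(\Xi,B) = j_1^*(\Xi,B)$ as a definable $C^r$ bilinear space, so $j_0^*\varphi$ is a definable $C^r$ $M$-isometry
\[
j_0^*\varphi \colon (\Xi,B)|_{M \times \{0\}} \longrightarrow (\Xi,B)|_{M \times \{1\}},
\]
which is exactly what is required.

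The proof therefore contains no real obstacle once Theorem~\ref{thm:hombil} is in hand; the only point to be checked is the functoriality claim $(g \circ f)^*(\eta,t) = f^*(g^*(\eta,t))$ for induced bilinear spaces, together with the statement that pullback sends definable $C^r$ isometries to definable $C^r$ isometries. This is routine: it is immediate from the definition of the induced bilinear space $f^*(\xi,s)$ given in Section~\ref{sec:equi_bil} and the analogous well-known functoriality for pulled-back vector bundles (cf. Proposition~\ref{prop:hometc}(i)), and I would dispatch it with a one-sentence remark rather than a computation. No additional o-minimality input is needed beyond what is already packaged inside Theorem~\ref{thm:hombil}.
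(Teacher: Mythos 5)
Your proposal is correct and follows the same route as the paper, which proves this corollary simply by declaring it immediate from Theorem~\ref{thm:hombil}; your write-up merely makes explicit the routine pullback along $j_0$, the identity $r \circ j_0 = j_1$, and the harmlessness of shrinking $U$. Nothing in your argument deviates from or adds to what the paper's one-line proof implicitly relies on.
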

\begin{proof}
Immediate from Theorem \ref{thm:hombil}.
\end{proof}

\begin{cor}\label{cor:bilhomotpoic}
Consider definable $C^r$ manifolds $M$ and $N$, where $r$ is a nonnegative integer. 
Let $f,g:M \rightarrow N$ be definably $C^r$ homotopic definable $C^r$ maps.
Let $(\xi,s)$ be a definable $C^r$ bilinear space over $N$.
Then, the induced definable $C^r$ bilinear spaces $f^*(\xi,s)$ and $g^*(\xi,s)$ are definably $C^r$ isometric. 
\end{cor}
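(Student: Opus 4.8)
The plan is to imitate the proof of Corollary \ref{cor:vechomotpoic} and reduce to the restriction form of the homotopy theorem, Corollary \ref{thm:main3_0}. Let $H : M \times [0,1] \to N$ be a definable $C^r$ map with $H(x,0)=f(x)$ and $H(x,1)=g(x)$ for all $x \in M$, which exists since $f$ and $g$ are definably $C^r$ homotopic. The first step is to replace $H$ by a definable $C^r$ map defined on all of $M \times \mathbb R$ (hence on a definable open set containing $M \times [0,1]$) which is constant in the second variable near $t=0$ and near $t=1$. To do this, apply Lemma \ref{lem:sep} to $\mathbb R$ and the disjoint closed definable sets $(-\infty, 1/3]$ and $[2/3,\infty)$ to obtain a definable $C^r$ function $\theta : \mathbb R \to [0,1]$ with $\theta \equiv 0$ on $(-\infty,1/3]$ and $\theta \equiv 1$ on $[2/3,\infty)$, and set $H'(x,t) = H(x,\theta(t))$ for $(x,t) \in M \times \mathbb R$.

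Next I would verify that $H' : M \times \mathbb R \to N$ is a definable $C^r$ map. On $M \times (-\infty, 2/3)$ one has $\theta(t) \in [0,1)$, so there $H'$ is the composition of $\mathrm{id}_M \times \theta$ with $H$ near the boundary locus $M \times \{0\}$, and it agrees with the definable $C^r$ map $(x,t)\mapsto f(x)$ wherever $\theta(t)=0$; since $\theta$ is flat to order $r$ at $t=1/3$, this composition is definable $C^r$ on $M \times (-\infty,2/3)$. Symmetrically $H'$ is definable $C^r$ on $M \times (1/3,\infty)$. As these two definable open sets cover $M \times \mathbb R$ and $H'$ is well defined, $H'$ is definable $C^r$ on $M \times \mathbb R$. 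Moreover $\theta(0)=0$ and $\theta(1)=1$, so $H'(x,0)=f(x)$ and $H'(x,1)=g(x)$.

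Finally I would form the definable $C^r$ bilinear space $(H')^*(\xi,s)$ over the definable open set $U = M \times \mathbb R \supset M \times [0,1]$. By functoriality of the pull-back of definable $C^r$ bilinear spaces, its restriction to $M \times \{0\}$ is definably $C^r$ isometric to $f^*(\xi,s)$ and its restriction to $M \times \{1\}$ is definably $C^r$ isometric to $g^*(\xi,s)$. Corollary \ref{thm:main3_0} (equivalently Theorem \ref{thm:hombil}) then produces a definable $C^r$ isometry between $(H')^*(\xi,s)|_{M \times \{0\}}$ and $(H')^*(\xi,s)|_{M \times \{1\}}$, hence between $f^*(\xi,s)$ and $g^*(\xi,s)$, which is the assertion.

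I expect the only mildly delicate point to be the check that $H'$ is globally of class $C^r$ across the loci $t=1/3$ and $t=2/3$, i.e.\ the interaction between the meaning of ``$C^r$ on $M \times [0,1]$'' (up to the boundary) and the flatness of $\theta$ there; if one instead reads a definable $C^r$ homotopy as the restriction of a definable $C^r$ map on a definable open neighbourhood of $M \times [0,1]$, this step disappears and the corollary is immediate from Corollary \ref{thm:main3_0}. Everything else is a formal consequence of functoriality of the pull-back together with the homotopy theorem.
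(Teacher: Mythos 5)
Your proposal is correct and follows the same route as the paper, which simply declares the corollary ``Immediate from Corollary \ref{thm:main3_0}'': pull back $(\xi,s)$ along the homotopy and compare the restrictions to $M\times\{0\}$ and $M\times\{1\}$. The only addition is your careful reparametrization $\theta$ extending $H$ to $M\times\mathbb R$, a point the paper leaves implicit; your observation that this step vanishes if a definable $C^r$ homotopy is read as the restriction of a definable $C^r$ map on a definable open neighbourhood of $M\times[0,1]$ matches the paper's evident intent.
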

\begin{proof}
Immediate from Corollary \ref{thm:main3_0}.
\end{proof}

\begin{cor}\label{cor:bilpositive2}
Let $\xi$ be a definable $C^r$ vector bundle over a definable $C^r$ manifold, where $r$ is a nonnegative integer. 
Consider two positive definable $C^r$ bilinear forms $s$ and $s'$ over $\xi$.
Two definable $C^r$ bilinear spaces $(\xi,s)$ and $(\xi,s')$ are definably $C^r$ isometric. 
\end{cor}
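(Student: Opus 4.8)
The plan is to interpolate linearly between $s$ and $s'$ over $M \times \mathbb R$ and then invoke the homotopy theorem for definable $C^r$ bilinear spaces (Corollary \ref{thm:main3_0}). Write $M$ for the base manifold and let $\pi\colon M \times \mathbb R \rightarrow M$ be the projection onto the first factor; by Proposition \ref{prop:hometc} the pullback $\pi^*\xi$ is a definable $C^r$ vector bundle over $M \times \mathbb R$, and $\pi^*s$, $\pi^*s'$ are definable $C^r$ bilinear forms over it. First I would apply Lemma \ref{lem:sep} to the disjoint closed definable subsets $(-\infty,0]$ and $[1,\infty)$ of $\mathbb R$ to obtain a definable $C^r$ function $\theta\colon \mathbb R \rightarrow [0,1]$ with $\theta(0)=0$ and $\theta(1)=1$.

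Next I would define, for $(x,t) \in M \times \mathbb R$ and $v,w$ in the fibre $\xi_x$,
\begin{equation*}
B_{(x,t)}(v,w) = \bigl(1-\theta(t)\bigr)\, s_x(v,w) + \theta(t)\, s'_x(v,w) \text{.}
\end{equation*}
In a definable $C^r$ atlas of $\xi$ with transition maps $g_{ij}$, the forms $s$ and $s'$ correspond via Proposition \ref{prop:bilequiv} to families $\{s_i\}$ and $\{s_i'\}$ compatible with the same $g_{ij}$; since the compatibility relation is linear in the $s_i$, the convex combination $(1-\theta)s_i + \theta s_i'$ is again compatible and of class $C^r$ in $(x,t)$, so $B$ determines a definable $C^r$ bilinear form over $\pi^*\xi$ as soon as it is nondegenerate everywhere. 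The key observation is that a convex combination of two positive definite symmetric matrices is positive definite; hence, since $\theta(t)\in[0,1]$ and $s_x, s_x'$ are positive definite, $B_{(x,t)}$ is positive definite, in particular nondegenerate, for every $(x,t)$. Thus $(\pi^*\xi,B)$ is a definable $C^r$ bilinear space over the definable open set $U = M \times \mathbb R$, which contains $M \times [0,1]$.

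Finally I would apply Corollary \ref{thm:main3_0} to conclude that $(\pi^*\xi,B)|_{M\times\{0\}}$ and $(\pi^*\xi,B)|_{M\times\{1\}}$ are definably $C^r$ isometric. Under the canonical definable $C^r$ identifications $M \times \{0\} \cong M \cong M \times \{1\}$, the bundle $\pi^*\xi$ restricts to $\xi$, while $B$ restricts to $(1-\theta(0))s + \theta(0)s' = s$ over $M \times \{0\}$ and to $(1-\theta(1))s + \theta(1)s' = s'$ over $M \times \{1\}$. Composing these identifications with the isometry provided by the homotopy theorem yields the desired definable $C^r$ isometry between $(\xi,s)$ and $(\xi,s')$.

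I do not anticipate a serious obstacle here: the substance lies entirely in Corollary \ref{thm:main3_0}, and the only points requiring care are checking that $B$ is a genuine definable $C^r$ bilinear form (nondegeneracy coming from stability of positive definiteness under convex combination) and the bookkeeping of the restriction identifications. One could alternatively dispense with the auxiliary function $\theta$ by setting $B = (1-t)\pi^*s + t\pi^*s'$ on the definable open set $\{(x,t)\;|\; (1-t)s_x + t s_x' \text{ is positive definite}\}$, which contains $M \times [0,1]$ and is open because positive definiteness is an open semialgebraic condition on symmetric matrices; the same argument then applies.
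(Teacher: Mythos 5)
Your proposal is correct and follows essentially the same route as the paper: pull $\xi$ back to $M \times \mathbb R$, interpolate linearly between $s$ and $s'$ (the paper uses $(1-t)s+ts'$ and restricts to a definable open neighbourhood $U$ of $M\times[0,1]$ where the form stays positive definite, exactly your final alternative), and apply the homotopy theorem Corollary \ref{thm:main3_0}. Your reparametrization by $\theta$ from Lemma \ref{lem:sep} is only a cosmetic variation that avoids shrinking the open set.
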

\begin{proof}
Let $\xi=(E,p,M)$.
Consider the definable $C^r$ vector bundle $\eta=(E \times \mathbb R,q, M \times \mathbb R)$, where $q:E \times \mathbb R \rightarrow M \times \mathbb R$ is the definable $C^r$ map given by $q(v,t)=(p(v),t)$ for any $v \in E$ and $t \in \mathbb R$.
We next consider the definable $C^r$ bilinear form $\sigma$ over $\eta$ given by $\sigma(v,t)=(1-t)s(v)+ts'(v)$.
The bilinear form $\sigma$ is positive definite for any $0 \leq t \leq 1$.
There exists a definable open subset $U$ of $M \times \mathbb R$ containing $M \times [0,1]$ such that the restriction $\sigma|_{q^{-1}(U)}$ of $\sigma$ to  $q^{-1}(U)$ is positive definite.
Two definable $C^r$ bilinear space $(\xi,s)$ and $(\xi,s')$ are definably $C^r$ isometric by applying Corollary \ref{thm:main3_0} to the bilinear space $(\eta,\sigma)|_U$.
\end{proof}

We show counterparts of Corollary \ref{cor:bundle_iso} and Theorem \ref{thm:bundle_iso2} for definable $C^r$ bilinear spaces.
\begin{thm}\label{thm:bilinear_iso1}
Let $r$ be a nonnegative integer.
Consider two definable $C^r$ bilinear spaces over a definable $C^r$ manifold which are definably $C^0$ isometric. 
They are definably $C^r$ isometric.
\end{thm}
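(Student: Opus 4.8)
The plan is to reduce the statement, using the results already proved, to two facts in hand: that a definable $C^0$ isomorphism of definable $C^r$ vector bundles can be replaced by a definable $C^r$ isomorphism (Corollary \ref{cor:bundle_iso}), and that any two positive definite definable $C^r$ bilinear forms on a fixed definable $C^r$ vector bundle yield definably $C^r$ isometric bilinear spaces (Corollary \ref{cor:bilpositive2}). The device linking them is the orthogonal splitting of a bilinear space into its positive and negative definite parts (Corollary \ref{cor:bilsp_decomp}).

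First I would reduce to the case in which the two bilinear spaces sit over the same vector bundle. Let $\psi\colon(\xi_1,s_1)\to(\xi_2,s_2)$ be a definable $C^0$ isometry; in particular it is a definable $C^0$ isomorphism of the underlying vector bundles, so by Corollary \ref{cor:bundle_iso} there is a definable $C^r$ isomorphism $u\colon\xi_1\to\xi_2$. The pullback $s:=u^{\ast}s_2$, defined by $s(v,w)=s_2(u(v),u(w))$, is a definable $C^r$ bilinear form over $\xi_1$ (symmetry and fiberwise nondegeneracy are inherited because $u$ is a fiberwise linear isomorphism), $u$ is then a definable $C^r$ isometry $(\xi_1,s)\to(\xi_2,s_2)$, and $u^{-1}\circ\psi$ is a definable $C^0$ isometry $(\xi_1,s_1)\to(\xi_1,s)$. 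So it suffices to handle two definable $C^r$ bilinear forms $s_1,s_2$ over a single definable $C^r$ vector bundle $\xi$ admitting a definable $C^0$ isometry $\psi_0\colon(\xi,s_1)\to(\xi,s_2)$.

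Next, apply Corollary \ref{cor:bilsp_decomp} to each form, obtaining definable $C^r$ splittings $\xi=\xi^1_+\oplus\xi^1_-$ and $\xi=\xi^2_+\oplus\xi^2_-$ with the indicated restrictions of $s_1$ and $s_2$ positive, respectively negative, definite; after replacing $\xi^i_-$ by the $s_i$-orthogonal complement of $\xi^i_+$ (again a definable $C^r$ subbundle, since $s_i|_{\xi^i_+}$ is definite, hence nondegenerate) we may assume these are orthogonal splittings, so that $(\xi,s_i)=(\xi^i_+,s_i|_{\xi^i_+})\perp(\xi^i_-,s_i|_{\xi^i_-})$ as definable $C^r$ bilinear spaces. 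Now $\psi_0(\xi^1_+)$ is a definable $C^0$ subbundle of $\xi$ on which $s_2$ is positive definite, and together with $\psi_0(\xi^1_-)$ it is a fiberwise direct summand of $\xi$ with $s_2$ negative definite on $\psi_0(\xi^1_-)$; the uniqueness argument in Corollary \ref{cor:bilsp_decomp}, applied in the $C^0$ category --- the projection $\psi_0(\xi^1_+)\hookrightarrow\xi\twoheadrightarrow\xi^2_+$ along $\xi^2_-$ is injective on fibers by the definiteness bookkeeping and has fibers of the right dimension because $\psi_0$ preserves the index of inertia --- shows that $\psi_0(\xi^1_+)$ is definably $C^0$ isomorphic to $\xi^2_+$. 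Hence $\xi^1_+$ and $\xi^2_+$ are definably $C^0$ isomorphic, and by Corollary \ref{cor:bundle_iso} definably $C^r$ isomorphic; the same applies to $\xi^1_-$ and $\xi^2_-$.

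Finally I would match the definite pieces as bilinear spaces. Fix a definable $C^r$ isomorphism $w_+\colon\xi^1_+\to\xi^2_+$; then $s_1|_{\xi^1_+}$ and $w_+^{\ast}(s_2|_{\xi^2_+})$ are two positive definite definable $C^r$ bilinear forms over $\xi^1_+$, so by Corollary \ref{cor:bilpositive2} the spaces $(\xi^1_+,s_1|_{\xi^1_+})$ and $(\xi^2_+,s_2|_{\xi^2_+})$ are definably $C^r$ isometric; applying this to $-s_1|_{\xi^1_-}$ and $-w_-^{\ast}(s_2|_{\xi^2_-})$ (i.e. the evident negative definite analogue of Corollary \ref{cor:bilpositive2}, obtained by negating the form) gives a definable $C^r$ isometry $(\xi^1_-,s_1|_{\xi^1_-})\to(\xi^2_-,s_2|_{\xi^2_-})$. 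Taking the orthogonal sum of these two isometries produces a definable $C^r$ isometry $(\xi,s_1)\to(\xi,s_2)$. The step I expect to be the main obstacle is the third one: arranging the splitting to be genuinely orthogonal and, above all, justifying the $C^0$-version of the uniqueness in Corollary \ref{cor:bilsp_decomp}, which is needed because $\psi_0(\xi^1_+)$ is only a continuous subbundle; once that is in place the rest is assembly of results already established.
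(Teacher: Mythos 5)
Your proposal is correct and follows essentially the same route as the paper: decompose via Corollary \ref{cor:bilsp_decomp}, pass from a definable $C^0$ to a definable $C^r$ bundle isomorphism of the definite parts via Corollary \ref{cor:bundle_iso} (the paper's version of your ``$C^0$ uniqueness'' step is exactly the composition inclusion--isometry--projection), and conclude with Corollary \ref{cor:bilpositive2}. Your extra opening reduction to a single bundle and your explicit arrangement of $s_i$-orthogonal splittings are harmless refinements of details the paper treats more tersely, not a different argument.
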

\begin{proof}
Let $(\xi,s)$ and $(\xi',s')$ be two definable $C^r$ bilinear spaces.
Let $\varphi: (\xi,s) \rightarrow (\xi',s')$ be a definable $C^0$ isometry between them.
We demonstrate that $(\xi,s)$ and $(\xi',s')$ are definably $C^r$ isometric.
We may assume that $(\xi,s)$ is the Whitney sum of a positive definite definable $C^r$ bilinear space $(\xi_+,s_+)$ and a negative definite definable $C^r$ bilinear space $(\xi_-,s_-)$
by Corollary \ref{cor:bilsp_decomp}.
We may also assume that $(\xi',s')$ is the Whitney sum of a positive definite definable $C^r$ bilinear space $(\xi'_+,s'_+)$ and a negative definite definable $C^r$ bilinear space $(\xi'_-,s'_-)$
in the same way.
Let $\iota: \xi_+ \rightarrow \xi$ be the natural inclusion and $\pi':\xi' \rightarrow \xi'_+$ be the natural projection.
The composition $\pi' \circ \varphi \circ \iota$ is a definable $C^0$ isometry between $(\xi_+,s_+)$ and $(\xi'_+,s'_+)$.
We can show that $(\xi_-,s_-)$ and $(\xi'_-,s'_-)$ are definably $C^0$ isometric in the same way.
Hence, we may assume that $(\xi,s)$ and $(\xi',s')$ are positive definite.

Since two vector bundles $\xi$ and $\xi'$ are definably $C^0$ isomorphic, they are definably $C^r$ isomorphic by  Corollary \ref{cor:bundle_iso}.
Let $\psi:\xi \rightarrow \xi'$ be definable $C^r$ isomorphism.
The positive definite definable $C^r$ bilinear form $s$ over $\xi$ naturally induces  a positive definite definable $C^r$ bilinear form $\Tilde{s}$ over $\xi'$ via the isomorphism $\psi$.
Two definable bilinear spaces $(\xi',s')$ and $(\xi',\Tilde{s})$ are definably $C^r$ isometric by Corollary \ref{cor:bilpositive2}.
We have finished the proof.
\end{proof}

\begin{thm}\label{thm:bilinear_iso2}
Let $r$ be a nonnegative integer.
Any definable $C^0$ bilinear spaces over a definable $C^r$ manifold is definably $C^0$ isometric to a definable $C^r$ bilinear space over the same manifold. 
\end{thm}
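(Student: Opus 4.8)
The plan is to follow the template of the proof of Theorem~\ref{thm:bilinear_iso1}: transport the $C^0$ data to a definable $C^r$ vector bundle via Theorem~\ref{thm:bundle_iso2}, and then correct the transported bilinear form to a definable $C^r$ one. Let $(\xi,s)$ be a definable $C^0$ bilinear space over a definable $C^r$ manifold $M$.

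First I would reduce to the positive definite case. Regarding $(\xi,s)$ as $C^0$ data over the underlying definable $C^0$ manifold $M$ and applying Corollary~\ref{cor:bilsp_decomp} with $r=0$, we obtain an orthogonal decomposition $(\xi,s)=(\xi_+,s_+)\perp(\xi_-,s_-)$ into a positive definite and a negative definite definable $C^0$ bilinear space. The orthogonal sum of two definable $C^0$ isometries is a definable $C^0$ isometry, and the orthogonal sum of two definable $C^r$ bilinear spaces is a definable $C^r$ bilinear space, so it suffices to treat $(\xi_+,s_+)$ and $(\xi_-,s_-)$ separately. Moreover, replacing a bilinear form by its negative interchanges positive and negative definiteness while preserving both the definable $C^0$ isometry class and the property of being definably $C^0$ isometric to a definable $C^r$ bilinear space. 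Hence it is enough to prove: every positive definite definable $C^0$ bilinear space over $M$ is definably $C^0$ isometric to a definable $C^r$ bilinear space over $M$.

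So assume $(\xi,s)$ is positive definite. By Theorem~\ref{thm:bundle_iso2} there are a definable $C^r$ vector bundle $\eta$ over $M$ and a definable $C^0$ isomorphism $\psi:\xi\to\eta$. Setting $\tilde{s}(v,w)=s(\psi^{-1}(v),\psi^{-1}(w))$ defines a positive definite definable $C^0$ bilinear form over $\eta$ for which $\psi$ is a definable $C^0$ isometry from $(\xi,s)$ onto $(\eta,\tilde{s})$. By Proposition~\ref{prop:positive}, $\eta$ carries a positive definite definable $C^r$ bilinear form $t$. Now $\tilde{s}$ and $t$ are two positive definite definable $C^0$ bilinear forms over the definable $C^0$ vector bundle underlying $\eta$ (a definable $C^r$ bundle is in particular a definable $C^0$ bundle, and a definable $C^r$ form is in particular a definable $C^0$ form), so Corollary~\ref{cor:bilpositive2} applied with $r=0$ shows that $(\eta,\tilde{s})$ and $(\eta,t)$ are definably $C^0$ isometric. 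Composing the two isometries, $(\xi,s)$ is definably $C^0$ isometric to the definable $C^r$ bilinear space $(\eta,t)$, which proves the theorem.

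This argument is essentially bookkeeping on top of two substantial earlier results — Theorem~\ref{thm:bundle_iso2}, which moves the underlying bundle into the $C^r$ world, and Corollary~\ref{cor:bilpositive2}, the uniqueness of positive definite forms up to isometry — so there is no genuine obstacle. The only points that require attention are that Corollary~\ref{cor:bilsp_decomp}, Proposition~\ref{prop:positive}, and Corollary~\ref{cor:bilpositive2} all apply verbatim in the $r=0$ specialization, and that decomposition, negation of the form, and orthogonal sums interact correctly with the notions of definable $C^0$ isometry and definable $C^r$ bilinear space. If one wished to avoid the negation step, one could instead apply Proposition~\ref{prop:positive} to $\eta_+$ and $\eta_-$ and negate only the chosen $C^r$ form on the negative part; the bookkeeping is identical.
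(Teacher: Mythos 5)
Your proposal is correct and follows essentially the same route as the paper: decompose into positive and negative definite parts via Corollary \ref{cor:bilsp_decomp}, transport the form to a definable $C^r$ bundle using Theorem \ref{thm:bundle_iso2}, and then invoke Proposition \ref{prop:positive} together with Corollary \ref{cor:bilpositive2} (in its $C^0$ specialization) to replace the transported $C^0$ form by a $C^r$ one. Your extra bookkeeping on the negative definite summand and on why the $r=0$ versions of the cited results apply is exactly what the paper leaves implicit.
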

\begin{proof}
The definable $C^0$ bilinear space $(\xi, s)$ over a definable $C^r$ manifold is the Whitney sum of a positive definite definable $C^0$ bilinear space and a negative definite definable $C^0$ bilinear space by Corollary \ref{cor:bilsp_decomp}.
We may assume that the $C^0$ bilinear space is positive definite without loss of generality.
There exists a definable $C^r$ vector bundle $\Tilde{\xi}$ definably $C^0$ isomorphic to $\xi$ by Theorem \ref{thm:bundle_iso2}.
Let $\varphi:\Tilde{\xi} \rightarrow \xi$ be a definable $C^0$ isomorphism.
The positive definite definable $C^0$ bilinear form $s$ over $\xi$ naturally induces  a positive definite definable $C^0$ bilinear form $s'$ over $\Tilde{\xi}$ via the isomorphism $\varphi$.
Especially, two definable $C^0$ bilinear spaces $(\xi,s)$ and $(\Tilde{\xi},s')$ are definably $C^0$ isometric.

On the other hand, there exists a definable $C^r$ bilinear form $\Tilde{s}$ over $\Tilde{\xi}$ by Proposition \ref{prop:positive}.
Two definable bilinear spaces $(\Tilde{\xi},s')$ and $(\Tilde{\xi},\Tilde{s})$ are definably $C^0$ isometric by Corollary \ref{cor:bilpositive2}.
We have finished the proof.
\end{proof}

\section{Grothendieck rings and Witt rings are isomorphic}\label{sec:grothendieck}
The goal of this section is Main Theorem \ref{thm:main4}.
We first review the definitions of the Grothendieck ring and the Witt ring over a commutative ring.

\begin{definition}[Grothendieck ring over a commutative ring]
Let $R$ be a commutative ring. 
The notation $\projsec(R)$ denotes the set of all isomorphism class of finitely generated projective $R$-modules.
We define an equivalence relation $\sim$ on $\projsec(R) \times \projsec(R)$.
Let $(P_1,P_2), (P_1',P_2') \in \projsec(R) \times \projsec(R)$.
They are equivalent, namely, $(P_1,P_2) \sim (P_1',P_2')$ if there exist a nonnegative integer $r$ and an isomorphism of $R$-modules $P_1 \oplus P_2' \oplus R^r \simeq P_1' \oplus P_2 \oplus R^{r}$.
It is easy to check that the relation $\sim$ is an equivalence relation using the fact that a projective module is a direct summand of a free module.

The set $K_0(R)= \projsec(R) \times \projsec(R)/\sim$ is the \textit{Grothendieck ring} of $R$.
The notation $[(P_1,P_2)]$ denotes the equivalence class of $(P_1,P_2)$.
The sum of  $[(P_1,P_2)]$ and $[(P_1',P_2')]$ is given by $[(P_1 \oplus P_1',P_2 \oplus P_2')]$.
The inverse of $[(P_1,P_2)]$ is given by $[(P_2,P_1)]$ and denoted by $-[(P_1,P_2)]$.
The notation $[P]$ denotes the element $[(P,0)]$.
Any element of $K_0(R)$ is of the form $[P_1]-[P_2]$.
The product in $K_0(R)$ is defined using the tensor product of modules.
\end{definition}

For instance, the Grothendieck rings $K_0(\cdfr(\mathbb R^n))$ are isomorphic to $\mathbb Z$ by Corollary \ref{cor:vechomotpoic_euclid} for all nonnegative integers $n$ and $r$.

\begin{definition}[Witt ring over a commutative ring]
Let $R$ be a commutative ring such that $2$ is invertible in $R$.
The bilinear space $H(P)$ given by $(P \oplus P^{\vee},\beta)$ for some finitely generated projective module $P$ is called a \textit{hyperbolic space}, where $P^\vee$ is the dual of $P$ and $\beta$ is the bilinear form given by $\beta(x \oplus \varphi, y \oplus \psi)=\psi(x)+\varphi(y)$.

Two bilinear spaces $(P,b)$ and $(P',b')$ over the ring $R$ are \textit{Witt equivalent} if there are two finitely generated projective modules $Q$ and $Q'$ such that $(P,b) \perp H(Q)$ and  $(P',b') \perp H(Q')$ are isometric.
The notation $W(R)$ denotes the set of all Witt equivalence classes of bilinear spaces over $R$.
The sum of two elements $[(P,b)]$ and $[(P',b')]$ of $W(R)$ is defined as $[(P,b) \perp (P',b')]$.
The product of them are defined as $[(P,b) \otimes_A (P',b')]$.
They are well-defined by \cite[Theorem I.7.3]{MH}.
The additive inverse of $[(P,b)]$ is $[(P,-b)]$ by \cite[Theorem 15.1.4]{BCR}.
\end{definition}

The following theorem is Main Theorem \ref{thm:main4}.
The proof is similar to \cite[Theorem 15.1.2]{BCR}.
\begin{thm}\label{thm:main4_0}
Let $M$ be a definable $C^r$ manifold, where $r$ is a nonnegative integer. 
The Grothendieck ring $K_0(C_{\text{df}}^r(M))$ of the ring $C_{\text{df}}^r(M)$ is isomorphic to the Witt ring $W(C_{\text{df}}^r(M))$ of the same ring.
The Grothendieck rings $K_0(C_{\text{df}}^r(M))$ and $K_0(C_{\text{df}}^0(M))$ are also isomorphic.
\end{thm}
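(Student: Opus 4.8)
The plan is to establish the two assertions separately, using the equivalences of categories and the homotopy and approximation results proved above. \emph{For $K_0(\cdfr(M))\simeq W(\cdfr(M))$:} note first that $2$ is a unit in $\cdfr(M)$, so $W(\cdfr(M))$ is defined. By Main Theorem \ref{thm:main2_0} we may work with $\bdfr(M)$ in place of $\bilsp(\cdfr(M))$, and Corollary \ref{cor:bilsp_decomp} together with Corollary \ref{cor:bilpositive2} and its evident negative-definite analogue shows that every bilinear space over $\cdfr(M)$ is isometric to one of the form $(P_+,q_+)\perp(P_-,-q_-)$ with $q_\pm$ positive definite, the pair of modules $(P_+,P_-)$ being uniquely determined up to isomorphism. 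I would define $\Phi\colon W(\cdfr(M))\to K_0(\cdfr(M))$ by $\Phi([(P,b)])=[P_+]-[P_-]$. Orthogonal sum corresponds to the componentwise direct sum of the associated pairs, so $\Phi$ is additive; and by the elementary isometry $(Q,q)\perp(Q,-q)\simeq H(Q)$, valid over any commutative ring in which $2$ is invertible, the hyperbolic spaces are exactly those whose associated pair has the form $(Q,Q)$, so they are sent to $0$ and $\Phi$ factors through Witt equivalence. Multiplicativity follows from determining the definite parts of a tensor product: the tensor product of two forms of the same sign is positive definite and of opposite signs is negative definite, which, since the product of $K_0$ is induced by $\otimes$, shows that $\Phi$ respects products.

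Surjectivity of $\Phi$ is immediate, since $[P_1]-[P_2]=\Phi([(P_1,q_1)\perp(P_2,-q_2)])$ for positive definite forms $q_1,q_2$, which exist by Proposition \ref{prop:positive} and put the orthogonal sum in canonical form. For injectivity, assume $\Phi([(P,b)])=0$, so $P_+\oplus\cdfr(M)^k\simeq P_-\oplus\cdfr(M)^k$ for some $k$; adding the hyperbolic space $H(\cdfr(M)^k)$ to $(P,b)$ then gives a bilinear space whose positive and negative definite parts are isomorphic as modules, hence, by Corollary \ref{cor:bilpositive2} and the isometry just used, isometric to $(P',q')\perp(P',-q')\simeq H(P')$, so $(P,b)$ is Witt trivial. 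Thus $\Phi$ is a ring isomorphism. I expect the most delicate point here to be keeping the decomposition into definite parts coherent throughout, which relies on the uniqueness part of Corollary \ref{cor:bilsp_decomp}, together with the verification of the isometry $(Q,q)\perp(Q,-q)\simeq H(Q)$ in this generality.

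\emph{For $K_0(\cdfr(M))\simeq K_0(\cdfz(M))$:} by its very definition, the Grothendieck ring of a commutative ring $R$ depends only on the set $\projsec(R)$ of isomorphism classes of finitely generated projective $R$-modules, the operations induced on it by $\oplus$ and $\otimes$, and the class $[R]$. By Main Theorem \ref{thm:main1_0} and Proposition \ref{funcvecbun_plus}, the functor $\funcvec$ identifies this data, for $R=\cdfr(M)$, with the isomorphism classes of definable $C^r$ vector bundles over $M$ together with Whitney sum, tensor product and the trivial line bundle, and similarly for $r=0$. It therefore suffices to produce a bijection between isomorphism classes of definable $C^r$ and of definable $C^0$ vector bundles over $M$ respecting these three data; and the assignment sending a definable $C^r$ vector bundle to its underlying definable $C^0$ vector bundle does so, being surjective on isomorphism classes by Theorem \ref{thm:bundle_iso2}, injective by Corollary \ref{cor:bundle_iso}, and compatible with Whitney sum and tensor product since these are computed by the same total spaces in both categories. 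Composing with $\funcvec$ and its $C^0$ counterpart yields a bijection $\projsec(\cdfr(M))\to\projsec(\cdfz(M))$ compatible with $\oplus$, $\otimes$ and the unit, hence a ring isomorphism $K_0(\cdfr(M))\to K_0(\cdfz(M))$. The only point requiring attention is making precise the compatibility of ``passing to the underlying $C^0$ bundle'' with $\funcvec$ --- equivalently, that $\funcvec(\xi)\otimes_{\cdfr(M)}\cdfz(M)$ is the $\cdfz(M)$-module of definable $C^0$ sections of $\xi$, which follows from projectivity of $\funcvec(\xi)$ and Lemmas \ref{lem:sections} and \ref{lem:sections2} --- although one can also bypass this by arguing purely with isomorphism classes of bundles and the operations induced on them.
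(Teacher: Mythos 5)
Your proposal is correct and uses essentially the same ingredients as the paper: the decomposition into definite parts (Corollary \ref{cor:bilsp_decomp}) with its uniqueness, Corollary \ref{cor:bilpositive2}, the isometry $(Q,q)\perp(Q,-q)\simeq H(Q)$, the equivalences of Theorems \ref{thm:main1_0} and \ref{thm:main2_0}, and Corollary \ref{cor:bundle_iso} together with Theorem \ref{thm:bundle_iso2} for $K_0(\cdfr(M))\simeq K_0(\cdfz(M))$. The only difference is organizational: you verify directly that the single map $W(\cdfr(M))\rightarrow K_0(\cdfr(M))$ (the paper's $\nabla$) is a bijective ring homomorphism, whereas the paper constructs the two mutually inverse homomorphisms $\Delta$ and $\nabla$.
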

\begin{proof}
Set $A = \cdfr(M)$.
We first define a ring homomorphism $\Delta: K_0(A) \rightarrow W(A)$.
Since $K_0(A)$ is generated by $[P]$, where $P$ is a finitely generated projective $A$-module, we have only to define $\Delta([P])$.
There exists a definable $C^r$ vector bundle $\xi$ over $M$ such that $\funcvec(\xi)$ is isomorphic to $P$ by Theorem \ref{thm:main1_0}.
There exists a positive definite definable $C^r$ bilinear form $\sigma$ over $\xi$ by Proposition \ref{prop:positive}.
A bilinear form $b$ over $P$ is the bilinear form induced from the bilinear form $\sigma$ over $\xi$.
The equivalent class $[(P,b)]$ in $W(A)$ does not depend on the choice of a positive definite definable $C^r$ bilinear form $\sigma$ on $\xi$ by Corollary \ref{cor:bilpositive2}.
We set $\Delta([P])=[(P,b)]$.
When $[P]=[P']$ in $K_0(A)$, there exists a nonnegative integer $s$ and an isomorphism $\varphi:P' \oplus A^s \simeq P \oplus A^s$.
Let $\Tilde{b}$ be a positive definite bilinear form over $P \oplus A^s$ and $\varphi^*\Tilde{b}$ be a positive definable bilinear form over $P' \oplus A^s$ induced by $\Tilde{b}$ via $\varphi$.
Two bilinear space $(P \oplus A^s,b)$ and $(P' \oplus A^s,\varphi^*\Tilde{b})$ are isometric.
Let $b'$ be another positive definite bilinear form over $P'$.
We get $[(P',b')]=[(P',\varphi^*\Tilde{b}|_{P'})] = [(P' \oplus A^s, \varphi^*\Tilde{b})]- [(A^s,\varphi^*\Tilde{b}|_{A^s})] =
[(P \oplus A^s, \Tilde{b})]- [(A^s,\Tilde{b}|_{A^s})]
=[(P, \Tilde{b}|_{P})] = [(P,b)]$ by Corollary \ref{cor:bilpositive2}.
Therefore, the map $\Delta$ is well-defined.
Note that $b \perp b'$ and $b \otimes_A b'$ are positive definite when $b$ and $b'$ are positive definite.
We have shown that $\Delta$ is a ring homomorphism.

We next define a map $\nabla:W(A) \rightarrow K_0(A)$.
Let $(P,b)$ be a bilinear space over $A$.
There exists a definable $C^r$ bilinear space $(\xi,s)$ over $M$ such that $\funcbil(\xi,s)$ is isometric to $(P,b)$ by Theorem \ref{thm:main2_0}. 
There exists a unique decomposition $(\xi,s) = (\xi_+,s_+) \perp (\xi_-,s_-)$ up to isomorphism by Corollary \ref{cor:bilsp_decomp} such that $(\xi_+,s_+)$ and $(\xi_-,s_-)$ are positive and negative definite definable $C^r$ bilinear spaces over $M$, respectively.
We set $\nabla([(P,b)]) = [\funcvec(\xi_+)]- [\funcvec(\xi_-)]$.
It is obvious that $\nabla([(P,b) \perp (P',b')]) = \nabla([(P,b)]) + \nabla([(P',b')])$.
We show that $\nabla$ is well-defined.
Let $(P',b')$ be another bilinear space over $A$ with $[(P,b)]=[(P',b')]$ in $W(A)$.
There exist finitely generated projective $A$-modules $Q$ and $Q'$ such that  $(P,b) \perp H(Q)$ and $(P',b') \perp H(Q')$ are isometric.
Let $\Tilde{b}$ be a positive definite bilinear form over $Q$.
It exists by  Theorem \ref{thm:main1_0}, Proposition \ref{prop:positive} and Theorem \ref{thm:main2_0}.
The orthogonal sum $(Q, \Tilde{b}) \perp (Q, -\Tilde{b})$ is isomorphic to $H(Q)$ by the proof of \cite[Theorem 15.1.4]{BCR}.
We have $\nabla([(P,b)])=\nabla([(P,b)] )+ \nabla([(Q, \Tilde{b})]) + \nabla([(Q, -\Tilde{b})]) = \nabla([(P,b) \perp H(Q)]) = \nabla([(P',b') \perp H(Q')]) =\nabla([(P',b')])$.
It shows that $\nabla$ is well-defined.

We can easily show that $\nabla \circ \Delta$ is the identity map.
We next show that $\Delta \circ \nabla$ is the identity map.
Let $(P,b)$ be a bilinear space over $A$.
We define a definable $C^r$ bilinear spaces $(\xi,s)$, $(\xi_+,s_+)$ and $(\xi_-,s_-)$ over $M$ in the same way as above.
We have $\Delta \circ \nabla([(P,b)]) = \Delta([\funcvec(\xi_+)])-\Delta([\funcvec(\xi_-)])
=[\funcbil(\xi_+,s'_+)]-[\funcbil(\xi_-,s'_-)]$ for some positive definite definable $C^r$ bilinear forms $s'_+$ and $s'_-$ by Theorem \ref{thm:main2_0}.
We have $[\funcbil(\xi_+,s'_+)] = [\funcbil(\xi_+,s_+)]$ and $[\funcbil(\xi_-,s'_-)] = [\funcbil(\xi_-,-s_-)]$ by Corollary \ref{cor:bilpositive2}.
We get $\Delta \circ \nabla([(P,b)])=[\funcbil(\xi_+,s'_+)]-[\funcbil(\xi_-,s'_-)]=[\funcbil(\xi_+,s_+)]+[\funcbil(\xi_-,s_-)]=[\funcbil(\xi_+,s_+) \perp \funcbil(\xi_-,s_-)] = [\funcbil(\xi_+ \oplus \xi_-,s_+ \perp s_-)] = [\funcbil(\xi,s)]=[(P,b)]$.
We have shown that $\Delta \circ \nabla$ is the identity map.
The ring homomorphism $\Delta$ is a ring isomorphism between $K_0(A)$ and $W(A)$.

We finally demonstrate that the Grothendieck rings $K_0(C_{\text{df}}^r(M))$ and $K_0(C_{\text{df}}^0(M))$ are also isomorphic.
Since a definable $C^r$ vector bundle is simultaneously a definable $C^0$ vector bundle, we can consider a ring homomorphism $\Psi:K_0(\cdfr(M)) \rightarrow K_0(\cdfz(M))$ given by $\Psi([\funcvec(\xi)])=[\funcvec(\xi)]$ by Theorem \ref{thm:main1_0}. 
It is injective by Corollary \ref{cor:bundle_iso}.
It is surjective by Theorem \ref{thm:bundle_iso2}.
\end{proof}

\end{document}